\documentclass[11pt,a4paper]{article}

\usepackage{cmap}
\usepackage[T1]{fontenc}
\input{glyphtounicode}
\usepackage[utf8]{inputenc}
\usepackage[margin=2.6cm]{geometry}
\usepackage{amsmath,amssymb,amsthm,mathtools}
\usepackage{booktabs,array,enumitem}
\usepackage{aliascnt}
\usepackage{xcolor}
\definecolor{linkblue}{RGB}{0,60,170}
\usepackage[colorlinks=true,linkcolor=linkblue,citecolor=linkblue,urlcolor=linkblue]{hyperref}
\hypersetup{
  pdftitle={Type-II Error Bounds for Test Supermartingales
            from Lower-Tail Hypotheses},
  pdfauthor={Patrick Forre},
  pdfsubject={Anytime-valid hypothesis testing},
  pdfkeywords={e-value, e-variable, test supermartingale, safe testing,
               anytime-valid inference, type-II error, error exponent,
               concentration inequality, information projection}}
\usepackage[capitalize,noabbrev]{cleveref}

\allowdisplaybreaks
\newcommand{\doi}[1]{\href{https://doi.org/#1}{\texttt{doi:}\nolinkurl{#1}}}
\newcommand{\E}{\mathbb{E}}
\newcommand{\Prob}{\mathbb{P}}
\newcommand{\R}{\mathbb{R}}
\newcommand{\N}{\mathbb{N}}
\newcommand{\Xc}{\mathcal{X}}
\newcommand{\Bs}[1]{\mathcal{B}_{#1}}
\newcommand{\Fc}{\mathcal{F}}
\newcommand{\Hc}{\mathcal{H}}
\newcommand{\KL}{\mathrm{KL}}

\newcommand{\Var}{\operatorname{Var}}
\newcommand{\kl}{\mathrm{kl}}
\newcommand{\evar}{\textsf{e}-variable}
\newcommand{\evars}{\textsf{e}-variables}
\newcommand{\epower}{\textsf{e}-power}
\newcommand{\etm}{test supermartingale}

\newcommand{\lp}{\left(}
\newcommand{\rp}{\right)}
\newcommand{\lB}{\left[}
\newcommand{\rB}{\right]}
\newcommand{\lC}{\left\{}
\newcommand{\st}{\;\middle|\;}
\newcommand{\rC}{\right\}}

\newtheorem{theorem}{Theorem}[section]
\newcommand{\aliasthm}[3]{%
  \newaliascnt{#1}{theorem}%
  \theoremstyle{#3}\newtheorem{#1}[#1]{#2}%
  \aliascntresetthe{#1}%
  \crefname{#1}{#2}{#2s}\Crefname{#1}{#2}{#2s}}

\theoremstyle{plain}
\aliasthm{proposition}{Proposition}{plain}
\aliasthm{lemma}{Lemma}{plain}
\aliasthm{corollary}{Corollary}{plain}
\aliasthm{definition}{Definition}{definition}
\aliasthm{assumption}{Assumption}{definition}
\aliasthm{hypothesis}{Hypothesis}{definition}
\aliasthm{example}{Example}{definition}
\aliasthm{remark}{Remark}{remark}
\crefname{theorem}{Theorem}{Theorems}
\Crefname{theorem}{Theorem}{Theorems}

\title{\bfseries Type-II Error Bounds for Test Supermartingales\\[3pt]
from Lower-Tail Hypotheses}
\author{Patrick Forr\'e\\[+10pt]
\small{AI4Science Lab}\\[0pt]
\small{Korteweg-de Vries Institute for Mathematics}\\[0pt]
\small{University of Amsterdam}\\[0pt]
\small{\texttt{p.d.forre@uva.nl}}}
\date{}

\begin{document}
\maketitle

\begin{abstract}
\noindent
In safe hypothesis testing with \etm{}s, Ville's inequality provides
anytime-valid type-I error guarantees for every significance level
$\alpha\in(0,1]$, if one rejects the null hypothesis whenever the wealth process
first exceeds $\frac1\alpha$.

Due to an inherent asymmetry, the type-II error does not have such guarantees: a
heavy concentration of the probability on the lower tail of the log-increments
can lead to one catastrophic bet that undoes any amount of accumulated evidence.
This paper studies how different hypotheses on those lower-tail probabilities
lead to different bounds on the type-II error of the sequential test. They all
reduce to one \emph{master inequality}, which bounds the type-II error at level
$\alpha$, at a fixed horizon and sequentially, in terms of a one-sided Legendre
transform of the (inverse-)moment generating function of the \evars{}, evaluated
at one number: the amount by which the lower bound of the accumulated \epower s
exceeds $\log\frac1\alpha$. And, the step is lossless, in the sense, that it
extracts exactly a constrained information projection.

Every bound presented here is a corollary, obtained by a certain majorant of the
above function. The hypotheses are: a finite negative moment; an exponentially
small crash probability with a moment on the winning side; a wealth floor with a
conditional variance, and its Bernstein variant, which interpolates between a
Gaussian regime set by the variance and an exponential one set by the scale; a
sub-Gaussian or bounded-tilt lower tail; bounded log-increments; and i.i.d.\
increments, where the majorant is the truth. We also provide an
empirical-Bernstein variant.

Each hypothesis may either be read as a condition on the \evars{} one has, or as
the price of betting with an approximation to the likelihood ratio rather than
the ratio itself, which satisfies the weakest condition for free.

\medskip
\noindent
\textbf{Keywords.} \textsf{e}-value; \evar{}; \etm{}; safe testing;
anytime-valid inference; type-II error; error exponent; concentration
inequality; information projection.

\medskip
\noindent
\textbf{MSC 2020.} 62L10 (primary); 60E15, 60F10, 60G40, 60G42, 62F03
(secondary).
\end{abstract}

\tableofcontents

\section{Introduction}
\label{sec:intro}

A \etm{} is a betting scheme against a null hypothesis. Observations arrive on
a filtered space $(\Omega,\Fc,(\Fc_t)_{t\in\N_0})$, indexed by
$\N_0=\{0,1,2,\dots\}$, carrying two probability
measures, the null $\Prob_0$ and the alternative $\Prob_1$; at step $n$ one
wagers on the next observation with a \emph{conditional \evar{}} $E_n$, an
$\Fc_n$-measurable nonnegative random variable with
$\E_{\Prob_0}[E_n\mid\Fc_{n-1}]\le1$, and the accumulated wealth
$W_t=\prod_{n\le t}E_n$ is the evidence. Writing
\[
   \tau_\alpha\ :=\ \inf\lC t\in\N\st W_t>\tfrac1\alpha\rC
\]
for the \emph{rejection time} --- the first moment at which the wealth passes
the threshold $\frac1\alpha$ --- rejecting at $\tau_\alpha$ is valid at level
$\alpha$ at every stopping time simultaneously, by Ville's inequality, and that
guarantee costs nothing beyond the defining property of a conditional
\evar{}. This asymmetry --- type-I control for free, type-II control not --- is
the starting point of both this paper and its companion \cite{CEIL}.

\subsection*{\texorpdfstring{\epower{}}{e-power} is not the answer}

The scalar the safe-testing literature attaches to a design is its \epower{}
$\E_{P_1}[\log E]$, the growth rate of the wealth under the alternative
\cite{Sha21,GHK24,RGVS23}; here and below $P_1$ denotes the law of one
observation when $\Prob_1$ is a product measure, and $E$ the \evar{} that
generates the bets. It is the right quantity for the \emph{mean}
rejection time, by Wald's identity. It is not a type-II guarantee. The
companion paper constructs, for every level $\alpha$, every horizon $t$, every
$c>0$ and every $\eta\in(0,1)$, a pair of hypotheses and a betting sequence
whose conditional \epower{} equals $c$ at every step and whose sequential
type-II error at horizon $t$ is at least $1-\eta$ \cite[Thm.~3.7]{CEIL}. No
inequality of the form ``\epower{} $\ge c$ implies
$\bar\gamma_t(\alpha)\le\Phi(c,\alpha,t)$'' --- where
$\bar\gamma_t(\alpha)=\Prob_1(\tau_\alpha>t)$ is the probability of not having
rejected by time $t$ --- can hold with $\Phi<1$, uniformly over testing
problems.

Something must therefore be assumed about how badly a single bet can crash. The
subject of this paper is the exchange rate: which hypothesis on the lower tail
of $\log E_n$ buys which exponent, and how much each one loses.

\subsection*{One reduction, applied nine times}

The whole paper rests on \Cref{thm:master}, which is three lines long. Suppose
one can certify a drift,
\[
   \E_{\Prob_1}\lB\log E_n\mid\Fc_{n-1}\rB\ \ge\ c_n ,
\]
and a majorant for the anchored conditional cumulant generating function,
\[
   \log\E_{\Prob_1}\lB e^{-s(\log E_n-c_n)}\mid\Fc_{n-1}\rB\ \le\ \psi_n(s) ,
\]
with $c_n$ and $\psi_n$ \emph{predictable} --- known at step $n-1$, and
otherwise arbitrary. Aggregate them into deterministic $C_t$ and $\Psi^{(t)}$
with $C_t\le\sum_{n\le t}c_n$ and $\Psi^{(t)}\ge\sum_{n\le t}\psi_n$
$\Prob_1$-a.s., and write
$\ell=\log\frac1\alpha$ for the log-threshold and
$\Delta_t=(C_t-\ell)_+$ for the \emph{deviation}, the amount by which the
certified drift has overshot the threshold by time $t$. Then, for the
sequential error $\bar\gamma_t(\alpha)=\Prob_1(\tau_\alpha>t)$ --- the
probability of not having rejected by $t$ --- and the fixed-horizon error
$\gamma_t(\alpha)=\Prob_1(W_t\le\frac1\alpha)$,
\[
   \bar\gamma_t(\alpha)\ \le\ \gamma_t(\alpha)\ \le\
   \exp\lp-\lp\Psi^{(t)}\rp^{\star}\!\lp\Delta_t\rp\rp ,
\]
where
\[
   \lp\Psi^{(t)}\rp^{\star}(x)\ :=\ \sup_{s\ge0}
      \lC sx-\Psi^{(t)}(s)\rC
\]
is the one-sided Legendre transform of the aggregated majorant: one-sided
because the tilt $s$ ranges over $[0,\infty)$ only, the Markov step requiring
$x\mapsto x^{-s}$ to be nonincreasing, and negative tilts being useless in any
case (\Cref{lem:positive}).
A conditional Chernoff bound, the tower property, Markov's inequality. In
particular no product structure and no independence are used, the filtration is
arbitrary, and nothing per step has to be a number: only the aggregate does,
because only the aggregate appears in the bound.

Three things make this more than bookkeeping. First, it is \emph{lossless}:
\Cref{prp:duality} shows that the exponent it extracts from a constraint on a
conditional mean equals the value of a constrained information projection,
\[
   \sup_{s\ge0}\lC-sc-\log\E_{\Prob_1}\lB E^{-s}\rB\rC
   \ =\ \inf\lC\KL(Q\|\Prob_1)\st\E_Q[\log E]\le c\rC ,
\]
where the infimum is over all probability measures $Q\ll\Prob_1$ and
$\KL(Q\|\Prob_1)=\E_Q[\log\frac{dQ}{d\Prob_1}]$ is the relative entropy: the
right-hand side is the cheapest way, measured in relative entropy, of moving
from $\Prob_1$ to a law under which the certified drift $c$ fails. The step from the hypothesis to the exponent is
therefore an identity, not an estimate, and every loss in what follows is
attributable to the majorant and to nothing else. (A matching \emph{lower}
bound on the error itself is a separate matter, available only at rung~6, by
Cram\'er's theorem.) Second, it comes
with a time-uniform form (\Cref{thm:master}(d)--(e)) that every rung inherits.
Third, it makes the effect of the level exact. In the homogeneous case, where
each step certifies the same drift $c$ and admits the same majorant $\psi$,
whatever the hypothesis,
\[
   \gamma_t(\alpha)\ \le\ \alpha^{-s^{\ast}(c)}\,e^{-t\,\psi^{\star}(c)}
\]
at every horizon past $\ell/c$ (\Cref{prp:price}); here
$\psi^{\star}(c)=\sup_{s\ge0}\{sc-\psi(s)\}$ is the per-observation rate
function at the full drift and $s^{\ast}(c)$ is a tilt attaining that supremum.
So the asymptotic rate is the rate function at the \emph{full} drift, and the
level enters only through the optimising tilt --- the price per nat of
threshold.

\subsection*{The ladder}

\Cref{sec:ladder} then substitutes majorants. Each rung states a hypothesis,
reads off a rate function and an optimising tilt, and is a corollary of
\Cref{thm:master}:
\begin{center}
\small
\begin{tabular}{@{}l>{\raggedright\arraybackslash}p{10.4cm}@{}}
\toprule
\Cref{sec:rung1} & one negative moment,
   $\E_{\Prob_1}[E_n^{-s_0}\mid\Fc_{n-1}]\le\rho_n$ at one fixed tilt $s_0>0$\\
\Cref{sec:rung2} & a polynomially small crash probability, together with a
   one-sided $p$-th moment ($2$), an $L\log L$ moment ($2'$) or a two-sided
   second moment ($2''$) on the winning side\\
\Cref{sec:rung3} & a wealth floor and a conditional variance ($3$), or
   Bernstein moments ($3'$)\\
\Cref{sec:rung4} & a sub-Gaussian lower tail ($4$), or a bounded tilted
   variance ($4'$)\\
\Cref{sec:rung5} & bounded log-increments, in an Azuma--Hoeffding ($5a$) and an
   exact binary-entropy ($5b$) form\\
\Cref{sec:rung6} & i.i.d.\ increments, where the majorant is the truth and the
   exponent is exact\\
\bottomrule
\end{tabular}
\end{center}
\Cref{tab:ladder} collects them. Two structural facts are worth stating in
advance. The rungs do \emph{not} form a chain: rungs~2 and~4 are incomparable,
because a sub-Gaussian lower tail is silent about the winning side, and that is
not a technicality --- it is why the moment conditions of rung~2 are needed at
all (\Cref{rem:notachain}). And, among the fixed-horizon bounds, rung~1 is the only
one with a critical horizon: its majorant is linear at the origin, so its bound
is empty, not merely loose, until $t$ exceeds $s_0\ell/(s_0c-\kappa)$, where
$\kappa=\log\rho+s_0c\ge0$ measures how far the assumed moment bound $\rho$ is
from the value conditional Jensen already forces; whereas
rungs~2--6 become informative as soon as the certified drift passes the
threshold (\Cref{rem:vacuity}). In the time-uniform form the tilt is fixed in
advance and rung~$2'$ acquires a critical horizon too (\Cref{rem:rung2prime}).

\subsection*{What the hypotheses are about}

It would be a misreading to take \Cref{sec:ladder} as a list of regularity
conditions on the testing problem. Every rung constrains the
\emph{approximation} to $R=\frac{dP_1}{dP_0}$ that one chooses to bet with, and
the exact likelihood ratio satisfies the weakest of them for nothing, since
\[
   \E_{P_1}\lB R^{-1}\rB=P_0(R>0)\ \le\ 1 .
\]
What the ladder measures is the cost of \emph{not} betting the likelihood ratio
--- of using a learned score, a plug-in estimate, a misspecified model --- and
the cost is paid in the crash direction only. The hypotheses can moreover be
enforced rather than assumed: clipping an arbitrary \evar{} from below buys
rungs~1 and~2 (\Cref{ex:clipping}), clipping from both sides with the same
parameter buys rung~5 for \emph{any} \evar{} whatsoever (\Cref{ex:twosided}),
and a bounded stake lands on rung~5 by construction (\Cref{ex:betting}). The
design question is not whether a rung holds but which rung to buy, and at what
price in \epower{}.

\subsection*{Relation to the companion paper, and scope}

The two papers answer complementary questions. This one asks what exponent can
be \emph{certified} from a hypothesis; \cite{CEIL} asks how large the exponent
can \emph{be}, and shows that over all \evars{} it is bounded by
$\KL(P_0\|P_1)$ --- the Chernoff--Stein exponent, in the opposite direction to
the divergence $\KL(P_1\|P_0)$ that caps the \epower{} --- with the bound tight
and not attained. The two meet at rung~6 and nowhere else, and neither is used
in proving the other; the present paper is self-contained. Notation is shared
between the two throughout.

\Cref{sec:observable} gives the one rung whose constant can be observed rather
than asserted; \Cref{sec:composite} and \Cref{sec:secondorder} record, briefly,
what survives for a composite alternative and what the level costs.
\Cref{sec:discussion} closes with what the reduction does not do. All proofs
are in \Cref{app:proofs}; the classical inequalities used are collected in
\Cref{app:concentration}, restated in the notation used here.

\section{Setup and notation}
\label{sec:setup}

Throughout, $\log$ is the natural logarithm, $\log0:=-\infty$, $e^{-\infty}:=0$,
$0\cdot\infty:=0$, and for $x\in[0,\infty]$ we set $x^{0}:=1$ and, for $s>0$,
$0^{-s}:=+\infty$ and $(+\infty)^{-s}:=0$.

\subsection{The filtered setting}
\label{sec:filtered}

The companion paper \cite{CEIL} works with i.i.d.\ observations, because its
results concern the exact exponent and that is an i.i.d.\ notion. Nothing in the
present paper needs a product structure: the master principle uses only the
tower property. We therefore work on an abstract filtered space and recover the
i.i.d.\ case as \Cref{ex:iidsetting}.

\begin{definition}[The setting]\label{def:setting}
Let $(\Omega,\Fc)$ be a measurable space with a filtration
$(\Fc_t)_{t\in\N_0}$, $\Fc_0=\{\emptyset,\Omega\}$, and let $\Prob_0,\Prob_1$ be
probability measures on $(\Omega,\Fc)$ with
\begin{equation}\label{eq:ac}
   \Prob_1\big|_{\Fc_t}\ \ll\ \Prob_0\big|_{\Fc_t}
   \qquad\text{for every }t\in\N_0 .
\end{equation}
A \emph{betting sequence} is a family $(E_n)_{n\in\N}$ of maps
$E_n:\Omega\to[0,\infty]$ such that $E_n$ is $\Fc_n$-measurable
(\emph{adapted}) and
\begin{equation}\label{eq:cevar}
   \E_{\Prob_0}\lB E_n\mid\Fc_{n-1}\rB\ \le\ 1
   \qquad\Prob_0\text{-a.s.}
\end{equation}
Its \emph{wealth process} is the adapted process
\[
   W_0:=1,\qquad W_t:=\prod_{n\le t}E_n\quad(t\in\N) ,
\]
a nonnegative $\Prob_0$-supermartingale \cite{SSVV11,GHK24,RGVS23}, and we write
\[
   L_n:=\log E_n\quad(\Fc_n\text{-measurable}),
   \qquad
   \log W_t=\sum_{n\le t}L_n .
\]
\end{definition}

\begin{example}[The i.i.d.\ case]\label{ex:iidsetting}
\Cref{def:setting} covers the setting of \cite[\S2]{CEIL}: take
$\Omega=\Xc^{\N}$ for a measurable space $(\Xc,\Bs{\Xc})$, let $(X_n)_{n\in\N}$
be the coordinate process, $\Fc_t=\sigma(X_1,\dots,X_t)$, and
$\Prob_i=P_i^{\otimes\N}$ for one-observation laws $P_0,P_1$ on
$(\Xc,\Bs{\Xc})$ with $P_1\ll P_0$; then \eqref{eq:ac} holds. In that case we
write $R:=\frac{dP_1}{dP_0}$. Only \Cref{sec:rung6} and the results quoted from
\cite{CEIL} use this specialisation; everything else is stated in the
generality of \Cref{def:setting}. Note that $(X_n)$ generates the filtration
there, but \Cref{def:setting} does not require the filtration to be generated by
anything: a design may use external randomisation or side information, provided
the bets remain adapted.
\end{example}

\begin{definition}[Level, threshold, rejection time, type-II errors]
\label{def:typeII}
Fix $\alpha\in(0,1)$ and put $\ell:=\log\frac1\alpha\in(0,\infty)$, a constant.
The \emph{rejection time} is the stopping time
$\tau_\alpha:=\inf\lC t\in\N\st W_t>\frac1\alpha\rC$, and the two \emph{type-II
errors} at horizon $t\in\N_0$ are
\[
   \gamma_t(\alpha):=\Prob_1\lp W_t\le\tfrac1\alpha\rp ,
   \qquad
   \bar\gamma_t(\alpha):=\Prob_1\lp\tau_\alpha>t\rp ,
\]
the fixed-horizon and the sequential error.
\end{definition}

Two facts come for free, and we record them here because everything below is
stated against them.

\begin{theorem}[Anytime-valid type-I guarantee; Ville \cite{Vil39}]
\label{thm:typeI}
Under \Cref{def:setting}, $\Prob_0(\tau_\alpha<\infty)\le\alpha$ for every
$\alpha\in(0,1)$.
\end{theorem}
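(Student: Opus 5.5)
The plan is the classical stopping argument for Ville's inequality, written directly in the notation of \Cref{def:setting}; no earlier result of the paper is needed beyond the supermartingale property. The work splits into two parts: first show that $(W_t)_{t\in\N_0}$ is a nonnegative $\Prob_0$-supermartingale with $W_0=1$, then bound the probability that it ever exceeds $\frac1\alpha$.

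\emph{Supermartingale property.} Everything takes values in $[0,\infty]$, so conditional expectations of nonnegative variables are always defined. The wealth $W_{t-1}$ is $\Fc_{t-1}$-measurable and nonnegative. Hence, $\Prob_0$-a.s.,
\[
\E_{\Prob_0}[W_t\mid\Fc_{t-1}]=W_{t-1}\,\E_{\Prob_0}[E_t\mid\Fc_{t-1}]\le W_{t-1},
\]
using \eqref{eq:cevar} and the convention $0\cdot\infty=0$. Iterating with the tower property gives $\E_{\Prob_0}[W_t]\le1$ for all $t$, so each $W_t$ is $\Prob_0$-a.s.\ finite. The conventions for $\infty$ therefore cause no trouble.

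\emph{Stopping argument.} Fix a horizon $T\in\N$ and apply optional stopping to the bounded stopping time $\tau_\alpha\wedge T$. For nonnegative supermartingales and bounded stopping times this gives $\E_{\Prob_0}[W_{\tau_\alpha\wedge T}]\le W_0=1$. To keep the argument self-contained, I would prove this directly: write
\[
W_{\tau\wedge T}=1+\sum_{n\le T}\mathbf 1\{\tau\ge n\}(W_n-W_{n-1}).
\]
Here $\{\tau\ge n\}\in\Fc_{n-1}$, so each increment has nonpositive conditional expectation. The one delicate point is integrability, and I expect this to be the main obstacle, though a minor one. I would avoid subtracting possibly non-integrable terms by arguing inductively: $\E[W_{\tau\wedge n}\mid\Fc_{n-1}]\le W_{\tau\wedge(n-1)}$. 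This holds because on $\{\tau\le n-1\}$ both sides are equal, and on $\{\tau\ge n\}$ the left side equals $\E[W_n\mid\Fc_{n-1}]\le W_{n-1}$. Only nonnegative quantities are involved throughout.

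\emph{Markov's inequality and the limit.} On $\{\tau_\alpha\le T\}$ we have $W_{\tau_\alpha\wedge T}>\frac1\alpha$. Markov's inequality then gives
\[
\Prob_0(\tau_\alpha\le T)\le\alpha\,\E_{\Prob_0}[W_{\tau_\alpha\wedge T}]\le\alpha .
\]
Letting $T\to\infty$, continuity from below of $\Prob_0$ along the increasing events $\{\tau_\alpha\le T\}$ yields $\Prob_0(\tau_\alpha<\infty)\le\alpha$. The inequality is strict-threshold, so it in fact holds with room to spare, but the statement only needs $\le\alpha$.
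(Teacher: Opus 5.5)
Your proof is correct and is essentially the paper's argument: show that $(W_t)_{t\in\N_0}$ is a nonnegative $\Prob_0$-supermartingale with $W_0=1$, then apply Ville's inequality. The difference is only in how much is written out. The paper asserts the supermartingale property and quotes Ville (\Cref{thm:app-ville}) together with the inclusion $\{\tau_\alpha<\infty\}\subseteq\{\sup_tW_t\ge\frac1\alpha\}$, whereas you verify the supermartingale property under the $[0,\infty]$ conventions and reprove Ville directly for the hitting time, via the stopped process $W_{\tau_\alpha\wedge T}$, Markov's inequality and continuity from below.
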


\noindent\emph{Proof.}
$(W_t)_{t\in\N_0}$ is a nonnegative $\Prob_0$-supermartingale with $W_0=1$, so
Ville's inequality (\Cref{thm:app-ville}) gives
$\Prob_0(\sup_{t}W_t\ge\frac1\alpha)\le\alpha$, and
$\{\tau_\alpha<\infty\}\subseteq\{\sup_tW_t\ge\frac1\alpha\}$. \qed

\begin{lemma}[The sequential error dominates the fixed-horizon one]
\label{lem:ordering}
Under \Cref{def:setting}, for every $\alpha\in(0,1)$ and every $t\in\N_0$,
\begin{equation}\label{eq:ordering}
   \bar\gamma_t(\alpha)\ \le\ \gamma_t(\alpha) .
\end{equation}
\end{lemma}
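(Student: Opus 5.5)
The plan is to prove \eqref{eq:ordering}, $\bar\gamma_t(\alpha)\le\gamma_t(\alpha)$, by a pure event inclusion under $\Prob_1$, namely
\[
   \{\tau_\alpha>t\}\ \subseteq\ \{W_t\le\tfrac1\alpha\}\qquad\text{for every }t\in\N_0 .
\]
Monotonicity of $\Prob_1$ then gives $\Prob_1(\tau_\alpha>t)\le\Prob_1(W_t\le\frac1\alpha)$, which is exactly $\bar\gamma_t(\alpha)\le\gamma_t(\alpha)$ as defined in \Cref{def:typeII}. No supermartingale property, no absolute continuity \eqref{eq:ac} and no conditional \evar{} condition \eqref{eq:cevar} will be needed; the statement is purely pathwise.

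I would split into two cases according to $t$.

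\emph{Case $t=0$.} By definition $\tau_\alpha$ takes values in $\N\cup\{\infty\}$, with $\inf\emptyset=\infty$, so $\tau_\alpha\ge1>0$ everywhere. Hence $\{\tau_\alpha>0\}=\Omega$. On the other side, $W_0=1<\frac1\alpha$ because $\alpha\in(0,1)$, so $\{W_0\le\frac1\alpha\}=\Omega$ as well. Both errors therefore equal $1$, and the inequality holds with equality.

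\emph{Case $t\in\N$.} Fix $\omega$ with $\tau_\alpha(\omega)>t$. Every time $s\in\{1,\dots,t\}$ then lies strictly below the infimum defining $\tau_\alpha(\omega)$, so it does not belong to $\{s\in\N: W_s(\omega)>\frac1\alpha\}$. This gives $W_s(\omega)\le\frac1\alpha$ for all $s\le t$, and in particular for $s=t$. This also covers $\tau_\alpha(\omega)=\infty$, where the set is empty, and paths on which some $E_n=+\infty$ or $0$, since $W_t$ is $[0,\infty]$-valued and the comparison is pathwise.

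There is no real obstacle here. The only points needing care are the conventions: that $\tau_\alpha$ ranges over $\N$ and not $\N_0$ (which handles $t=0$), and that $\inf\emptyset=\infty$. It is worth remarking afterwards that the inclusion is generally strict. On $\{\tau_\alpha\le t\}$ the wealth may fall back below $\frac1\alpha$ by time $t$, so the gap $\gamma_t-\bar\gamma_t=\Prob_1(\tau_\alpha\le t,\,W_t\le\frac1\alpha)$ is the mass of such paths. This is why every bound proved for $\gamma_t$ is inherited by $\bar\gamma_t$.
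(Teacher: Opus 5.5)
Your proof is correct and is the paper's argument: the pathwise inclusion $\{\tau_\alpha>t\}\subseteq\{W_t\le\frac1\alpha\}$, followed by monotonicity of $\Prob_1$. Your separate treatment of $t=0$, using $\tau_\alpha\ge1$ and $W_0=1<\frac1\alpha$, only makes explicit a case that the paper's one-line proof covers implicitly.
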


\noindent\emph{Proof.}
On $\{\tau_\alpha>t\}$ the threshold has not been crossed at any time up to $t$,
in particular not at $t$, so $W_t\le\frac1\alpha$; that is
$\{\tau_\alpha>t\}\subseteq\{W_t\le\frac1\alpha\}$. \qed

\noindent
Every bound below is proved for $\gamma_t$ and inherited by $\bar\gamma_t$
through \eqref{eq:ordering}. The ordering supplies no \emph{reverse} inequality,
so a matching lower bound for $\bar\gamma_t$ always needs a separate argument;
\Cref{rem:seqexp} is the one place where that is discussed.

\begin{remark}[Where the null enters]\label{rem:nullenters}
After this subsection the null appears only through the definition of
$\tau_\alpha$ and through \Cref{rem:notevar}. Every bound below is a statement
about the law of $\log W_t=\sum_{n\le t}L_n$ under $\Prob_1$.
\end{remark}

\subsection{The two ingredients}
\label{sec:ingredients}

The whole paper rests on a pair: a certified drift and a certified majorant.
Neither is required to come from an \epower{} bound, and \Cref{rem:notdelta}
below explains why it matters that they need not. Neither is required to be
deterministic either; what must be is their aggregate, and \Cref{def:main}
separates the two roles. We are explicit throughout about which objects are
constants, which are adapted and which are predictable.

\begin{definition}[Drift certificate and majorant]\label{def:main}
Let $(E_n)_{n\in\N}$ be a betting sequence and fix $s_{\max}\in(0,\infty]$.
\begin{enumerate}[label=(\alph*),leftmargin=2.4em]
\item A \emph{drift certificate} is a \emph{predictable} sequence
      $(c_n)_{n\in\N}$ of real random variables --- $c_n$ is
      $\Fc_{n-1}$-measurable --- such that
      \begin{equation}\label{eq:anchor}
         \E_{\Prob_1}\lB L_n\mid\Fc_{n-1}\rB\ \ge\ c_n
         \qquad\Prob_1\text{-a.s., for every }n\in\N .
      \end{equation}
\item Write
      \begin{equation}\label{eq:cgf}
         \varphi_n(s)\ :=\
         \log\E_{\Prob_1}\lB e^{-s(L_n-c_n)}\mid\Fc_{n-1}\rB
         \qquad(s\ge0)
      \end{equation}
      for the \emph{anchored conditional cumulant generating function} of step
      $n$: for each fixed $s$ it is an $\Fc_{n-1}$-measurable random variable,
      hence predictable, and for each fixed $\omega$ it is a convex function of
      $s$ vanishing at $s=0$. A \emph{majorant} is a family
      $(\psi_n)_{n\in\N}$ of maps
      $\psi_n:[0,s_{\max})\times\Omega\to[0,\infty]$, \emph{predictable} in the
      sense that $\psi_n(s)$ is $\Fc_{n-1}$-measurable for each fixed $s$, with
      $\psi_n(0)=0$ and
      \begin{equation}\label{eq:majorant}
         \varphi_n(s)\ \le\ \psi_n(s)
         \qquad\Prob_1\text{-a.s.,}
      \end{equation}
      for every $n\in\N$ and $s\in[0,s_{\max})$.
\item An \emph{aggregate} is a pair of \emph{deterministic} objects --- numbers
      $C_t\in\R$ and maps $\Psi^{(t)}:[0,s_{\max})\to[0,\infty]$ --- such that,
      for every $t\in\N$ and every $s\in[0,s_{\max})$,
      \begin{equation}\label{eq:aggregate}
         \sum_{n\le t}c_n\ \ge\ C_t
         \qquad\text{and}\qquad
         \sum_{n\le t}\psi_n(s)\ \le\ \Psi^{(t)}(s)
         \qquad\Prob_1\text{-a.s.}
      \end{equation}
\end{enumerate}
We write $(C_t,\Psi^{(t)})_{t\in\N}$ for the aggregate, with the conventions
$C_0:=0$ and $\Psi^{(0)}:\equiv0$.
\end{definition}

\begin{remark}[What is random and what is not]\label{rem:whatsrandom}
Nothing \emph{per step} is required to be a number. The left-hand sides of
\eqref{eq:anchor} and \eqref{eq:majorant} are $\Fc_{n-1}$-measurable, and so are
the right-hand sides: $c_n$ and $\psi_n$ may depend on the past in any
measurable way, so each requirement compares two predictable quantities. The
process $(E_n)_{n\in\N}$, and with it $(L_n)_{n\in\N}$ and $(W_t)_{t\in\N_0}$,
is adapted but not predictable.

What must be deterministic is the \emph{aggregate}. The reason is not technical
but definitional: $(C_t,\Psi^{(t)})$ is what appears in the bound, and a bound
has to be a number. The tilt $s$ and the deviation $\Delta_t$ of
\Cref{def:rate} are then constants as well.
\end{remark}

\begin{remark}[Only the combination matters]\label{rem:onlyaggregate}
The proof of \Cref{thm:master} uses the two halves of \eqref{eq:aggregate} only
through the single inequality
\begin{equation}\label{eq:combined}
   s\sum_{n\le t}c_n-\sum_{n\le t}\psi_n(s)
   \ \ge\ s\,C_t-\Psi^{(t)}(s)
   \qquad\Prob_1\text{-a.s.},
\end{equation}
for each $s\in[0,s_{\max})$, so a deficit in the certified drift may be paid for
by a surplus in the majorant and conversely; \eqref{eq:aggregate} is the
convenient sufficient form. In two of the rungs the two halves genuinely cannot be
separated --- rungs~1 and~5(b), the only ones whose majorant involves $c_n$ ---
and there it is \eqref{eq:combined} that must be verified;
\Cref{rem:rung1nodrift} and \Cref{rem:predictable} do so.
\end{remark}

\begin{definition}[Deviation and rate function]\label{def:rate}
Given an aggregate, define the \emph{deviation} and the \emph{rate function}
\[
   \Delta_t:=\lp C_t-\ell\rp_+\ \in[0,\infty) ,
   \qquad
   \lp\Psi^{(t)}\rp^{\star}(x):=\sup_{s\in[0,s_{\max})}
      \lC sx-\Psi^{(t)}(s)\rC
   \quad(x\ge0) ,
\]
the Legendre transform of $\Psi^{(t)}$ restricted to nonnegative arguments.
Both are deterministic.
\end{definition}

\begin{remark}[Reading the two ingredients]\label{rem:reading}
Condition \eqref{eq:anchor} says the log-wealth drifts upward under the
alternative at a certified rate; \eqref{eq:majorant} controls how far below that
drift a single step can fall. The first makes the test eventually reject, the
second says how fast. The normalisation $\psi_n(0)=0$ is automatic from
\eqref{eq:majorant} at $s=0$ and is recorded because it makes
$(\Psi^{(t)})^{\star}\ge0$.
\end{remark}

\section{The master principle}
\label{sec:master}

The argument has three layers. \Cref{lem:onestep} treats one step;
\Cref{thm:master} multiplies the steps; \Cref{cor:iid} specialises to the
i.i.d.\ case, where the bound collapses to $t\,\psi^{\star}(\Delta_t/t)$.
\Cref{prp:duality} then shows that what \Cref{thm:master} produces is not
merely a Chernoff estimate but exactly the value of an information projection. All proofs are in
\Cref{app:proofs}.

\subsection{One step}

\begin{lemma}[One conditional \evar{}]\label{lem:onestep}
Under \Cref{def:main}, for every $n\in\N$ and every $s\in[0,s_{\max})$,
\[
   \E_{\Prob_1}\lB E_n^{-s}\mid\Fc_{n-1}\rB
   \ \le\ \exp\lp-s\,c_n+\psi_n(s)\rp
   \qquad\Prob_1\text{-a.s.}
\]
\end{lemma}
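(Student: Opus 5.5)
The plan is to rewrite $E_n^{-s}$ as $e^{-sL_n}$, pull out the predictable factor $e^{-sc_n}$, and then recognise the anchored cumulant generating function \eqref{eq:cgf}. The conventions from \Cref{sec:setup} make this identity hold pointwise, including at the extreme values of $E_n$. For $s>0$ we have $0^{-s}=+\infty=e^{-s\cdot(-\infty)}$ and $(+\infty)^{-s}=0=e^{-s\cdot\infty}$, and for $s=0$ both sides equal $1$. Hence $E_n^{-s}=e^{-sL_n}$ holds everywhere as a $[0,\infty]$-valued $\Fc_n$-measurable variable. The conditional expectation of a nonnegative extended-real variable is well defined, so no integrability is needed.

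Next, since $c_n$ is real-valued and $\Fc_{n-1}$-measurable, we write $e^{-sL_n}=e^{-sc_n}\,e^{-s(L_n-c_n)}$. We then take the factor $e^{-sc_n}\in(0,\infty)$ out of the conditional expectation. This "taking out what is known" step is valid for nonnegative variables without any integrability assumption. It gives
\[
   \E_{\Prob_1}\lB E_n^{-s}\mid\Fc_{n-1}\rB
   = e^{-sc_n}\,\E_{\Prob_1}\lB e^{-s(L_n-c_n)}\mid\Fc_{n-1}\rB
   = \exp\lp-sc_n+\varphi_n(s)\rp
   \qquad\Prob_1\text{-a.s.}
\]
Here we use the definition \eqref{eq:cgf}, with $\log\infty=\infty$ and $e^{\infty}=\infty$ if the conditional expectation is infinite.

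Finally, we apply the majorant inequality \eqref{eq:majorant}, $\varphi_n(s)\le\psi_n(s)$ $\Prob_1$-a.s., and use the monotonicity of $\exp$ on $[-\infty,\infty]$. The union of the two null sets involved is still null, which gives the claim.

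The only point needing care is the bookkeeping at the boundary. This covers the events $\{E_n=0\}$ (where $L_n=-\infty$ and $L_n-c_n=-\infty$) and $\{E_n=\infty\}$, and the possibility that $\varphi_n(s)=+\infty$. The conventions fixed in \Cref{sec:setup} are exactly what makes each identity above valid there. Beyond that the statement is immediate, and I expect no real obstacle.
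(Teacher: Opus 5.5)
Your proposal is correct and matches the paper's proof: write $E_n^{-s}=e^{-sc_n}e^{-s(L_n-c_n)}$, take the nonnegative $\Fc_{n-1}$-measurable factor $e^{-sc_n}$ out of the conditional expectation, and apply \eqref{eq:majorant}. Your extra bookkeeping at $E_n\in\{0,\infty\}$ and for $\varphi_n(s)=+\infty$ is accurate. It only makes explicit what the paper leaves to the conventions of \Cref{sec:setup}.
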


\subsection{The aggregate}

\begin{theorem}[Master principle]\label{thm:master}
Let $(E_n)_{n\in\N}$ be a betting sequence with an aggregate
$(C_t,\Psi^{(t)})$ in the sense of \Cref{def:main}. Then for every $t\in\N$:
\begin{enumerate}[label=(\alph*),leftmargin=2.4em]
\item \emph{(Negative moments of the wealth.)} For every $s\in[0,s_{\max})$,
      \begin{equation}\label{eq:negmom}
         \E_{\Prob_1}\lB W_t^{-s}\rB
         \ \le\ \exp\lp-s\,C_t+\Psi^{(t)}(s)\rp .
      \end{equation}
\item \emph{(Type-II error, sequential and fixed-horizon.)} For every
      $\alpha\in(0,1)$,
      \begin{equation}\label{eq:master}
         \bar\gamma_t(\alpha)\ \le\ \gamma_t(\alpha)
         \ \le\ \exp\lp-\lp\Psi^{(t)}\rp^{\star}\!\lp\Delta_t\rp\rp .
      \end{equation}
\item \emph{(The optimising tilt.)} Suppose $\Psi^{(t)}$ is finite,
      differentiable and strictly convex on $(0,s_{\max})$ with
      $\Psi^{(t)}(0)=0$. If $\Delta_t>\lim_{s\downarrow0}(\Psi^{(t)})'(s)$ then
      the supremum in \eqref{eq:master} is attained at the unique
      $s^{\ast}_t\in(0,s_{\max})$ solving
      \begin{equation}\label{eq:sstar}
         \lp\Psi^{(t)}\rp'(s^{\ast}_t)=\Delta_t ,
      \end{equation}
      provided such a point exists; otherwise it is approached as
      $s\uparrow s_{\max}$. If $\Delta_t\le\lim_{s\downarrow0}(\Psi^{(t)})'(s)$
      the supremum is $0$, attained at $s^{\ast}_t=0$, and \eqref{eq:master} is
      trivial.
\item \emph{(Time-uniform form.)} Fix $s\in(0,s_{\max})$ and
      $\delta\in(0,1)$. With $\Prob_1$-probability at least $1-\delta$,
      simultaneously for all $t\in\N_0$,
      \begin{equation}\label{eq:unif}
         \log W_t\ >\ C_t-\frac{\Psi^{(t)}(s)+\log\frac1\delta}{s} .
      \end{equation}
\item \emph{(Rejection time.)} Consequently, for every $s\in(0,s_{\max})$ and
      $\delta\in(0,1)$,
      \begin{equation}\label{eq:stop}
         \Prob_1\lp\tau_\alpha\le T_{s,\delta}\rp\ \ge\ 1-\delta ,
         \qquad
         T_{s,\delta}:=\inf\lC t\in\N:\
            s\,C_t-\Psi^{(t)}(s)\ \ge\ s\ell+\log\tfrac1\delta\rC .
      \end{equation}
\end{enumerate}
\end{theorem}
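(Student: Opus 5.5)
The plan is to establish (a) by peeling off one factor at a time with the tower property. Everything else then follows from Markov's inequality, elementary convex analysis, and Ville's inequality applied under $\Prob_1$ to a suitably built supermartingale.

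\emph{Part (a).} Fix $s\in[0,s_{\max})$ and define
\[
   M_t:=W_t^{-s}\exp\Bigl(s\sum_{n\le t}c_n-\sum_{n\le t}\psi_n(s)\Bigr),
   \qquad M_0=1 .
\]
The correction factor at step $t$ is $\Fc_{t-1}$-measurable. Hence \Cref{lem:onestep} gives
\[
   \E_{\Prob_1}[M_t\mid\Fc_{t-1}]\le M_{t-1},
\]
so $(M_t)$ is a nonnegative $\Prob_1$-supermartingale, and therefore $\E_{\Prob_1}[M_t]\le1$. Conditional expectations of nonnegative extended-valued variables are used throughout, so integrability is not an issue. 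We must also handle the conventions $0^{-s}=\infty$ and $\psi_n=\infty$. If $\psi_n(s)=\infty$ on a set of positive probability, the aggregate forces $\Psi^{(t)}(s)=\infty$ and (a) is trivial. Otherwise we use $0\cdot\infty=0$ carefully; this is the main technical nuisance.

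Next apply the combined inequality \eqref{eq:combined}:
\[
   W_t^{-s}=M_t\exp\Bigl(-s\sum c_n+\sum\psi_n(s)\Bigr)\le M_t\exp\bigl(-sC_t+\Psi^{(t)}(s)\bigr)\quad\Prob_1\text{-a.s.}
\]
The factor on the right is deterministic, so taking expectations gives \eqref{eq:negmom}.

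\emph{Part (b).} The first inequality is \Cref{lem:ordering}. For the second, take $s>0$. The map $x\mapsto x^{-s}$ is nonincreasing, so
\[
   \{W_t\le 1/\alpha\}=\{W_t^{-s}\ge\alpha^{s}\}.
\]
Markov's inequality and (a) then give
\[
   \gamma_t(\alpha)\le e^{s\ell}\,e^{-sC_t+\Psi^{(t)}(s)}=\exp\bigl(-(s(C_t-\ell)-\Psi^{(t)}(s))\bigr).
\]
If $C_t\ge\ell$, the exponent is $s\Delta_t-\Psi^{(t)}(s)$. If $C_t<\ell$, then $\Delta_t=0$ and the bound still holds, because $s(C_t-\ell)\le 0=s\Delta_t$ only weakens matters; more simply, $\gamma_t\le1$ and the $s=0$ term in the supremum gives exponent $0$. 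Taking the infimum over $s$, with $s=0$ trivially allowed, yields \eqref{eq:master}.

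\emph{Part (c).} This is convex analysis. The function $g(s)=s\Delta_t-\Psi^{(t)}(s)$ is strictly concave and differentiable on $(0,s_{\max})$, with derivative $\Delta_t-(\Psi^{(t)})'(s)$, which is strictly decreasing. If $\Delta_t\le(\Psi^{(t)})'(0+)$, then $g'<0$ throughout, so $g$ is decreasing with $g(0)=0$, and the supremum is $0$, attained at $0$. Otherwise $g'>0$ near $0$. Either $g'$ vanishes at a unique point $s^\ast_t$, which is then the maximiser, or $g'>0$ everywhere and the supremum is approached as $s\uparrow s_{\max}$.

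\emph{Parts (d) and (e).} For (d), apply Ville's inequality (\Cref{thm:app-ville}) under $\Prob_1$ to the supermartingale $M_t$ from (a):
\[
   \Prob_1\bigl(\exists t:\ M_t\ge1/\delta\bigr)\le\delta .
\]
On the complement, for all $t$ we have $M_t<1/\delta$. Taking logarithms and using \eqref{eq:combined},
\[
   -s\log W_t+sC_t-\Psi^{(t)}(s)<\log\tfrac1\delta ,
\]
which rearranges to \eqref{eq:unif}. The case $W_t=0$ gives $M_t=\infty$ and is excluded on this event. The case $t=0$ holds since $C_0=0$ and $\Psi^{(0)}=0$.

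For (e), on the same event take $t=T_{s,\delta}$ (if it is finite; otherwise there is nothing to prove). There
\[
   \log W_t>\frac{sC_t-\Psi^{(t)}(s)-\log\frac1\delta}{s}\ge\ell ,
\]
so $W_t>1/\alpha$ and hence $\tau_\alpha\le T_{s,\delta}$.

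I expect the only delicate step to be the extended-value bookkeeping in (a): the cases $W_t=0$, infinite $\psi_n$, and showing that the supermartingale property survives these conventions.
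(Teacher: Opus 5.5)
This is the paper's proof: the same tilted process $W_t^{-s}\exp\bigl(s\sum_{n\le t}c_n-\sum_{n\le t}\psi_n(s)\bigr)$ made a $\Prob_1$-supermartingale by \Cref{lem:onestep}, \eqref{eq:combined} to reach the deterministic aggregate, Markov for (b), the concavity argument for (c), and Ville under $\Prob_1$ for (d)--(e). The paper settles the extended-value bookkeeping you flag with one observation: \eqref{eq:cevar} forces $E_n<\infty$ $\Prob_0$-a.s., hence $\Prob_1$-a.s.\ by \eqref{eq:ac}, so $W_t^{-s}=W_{t-1}^{-s}E_t^{-s}$ holds $\Prob_1$-a.s.\ and the factorisation behind your appeal to \Cref{lem:onestep} is immediate (and in (b), for $C_t<\ell$, only your second reason is valid---the right-hand side equals $1$ because $\Psi^{(t)}\ge0$ and $\Psi^{(t)}(0)=0$---since the ``only weakens matters'' reason points the wrong way).
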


\begin{remark}[Fixed tilt versus optimised tilt]\label{rem:fixedtilt}
Parts (a)--(c) optimise over $s$ \emph{inside} a deterministic exponent and lose
nothing by it. Parts (d)--(e) cannot: they rest on a supermartingale built at
one tilt, which must be chosen before the data. Each rung of \Cref{sec:ladder}
therefore comes in two forms, and only the second requires a design choice; we
state \eqref{eq:unif} explicitly only for \Cref{sec:rung4}, where it is
prettiest, and note here that every rung inherits it by substituting its
$\Psi^{(t)}$.
\end{remark}

\begin{remark}[No \evar{} property is used]\label{rem:notevar}
Parts (a) and (b) use only that $(E_n)$ is adapted, nonnegative, and finite
$\Prob_1$-a.s. The conditional \evar{} property
$\E_{\Prob_0}[E_n\mid\Fc_{n-1}]\le1$ is what makes $\tau_\alpha$ a level-$\alpha$
rejection time (\Cref{thm:typeI}), and it delivers the finiteness --- it
forces $E_n<\infty$ $\Prob_0$-a.s., hence $\Prob_1$-a.s.\ --- but it plays no
role in the estimate itself. The master principle is a statement about a
sequence of certified drifts and certified tails; that the sequence happens to
be a betting sequence is what makes the conclusion a statement about a test.
\end{remark}

\begin{remark}[Why $\Delta_t$ is a readout, not a primitive]\label{rem:notdelta}
The pair $(C_t,\Psi^{(t)})$ is what \Cref{thm:master} consumes; $\Delta_t$
enters only when \eqref{eq:negmom} is converted into \eqref{eq:master}. Three
consequences.

First, the drift need not be certified by an \epower{} bound. Any argument
producing a deterministic $C_t$ with $C_t\le\sum_{n\le t}c_n$ $\Prob_1$-a.s.\
will do, and \eqref{eq:anchor} is one convenient sufficient condition among
several.

Second, the bound is trivial exactly when $C_t\le\ell$, and the positive part in
$\Delta_t$ is what truncates a useless negative rate to the trivial bound $1$.
That vacuity is a property of the readout, not of the method, and it is not an
artefact: \cite[Lem.~5.10]{CEIL} exhibits a matching lower bound: for an i.i.d.\ product
with $v:=\Var_{P_1}(\log E)<\infty$ and $c\ell>2v$, a design of \epower{} $c$
has $\bar\gamma_t(\alpha)\ge1-\frac{2v}{c\ell}$ at every horizon
$t\le\frac{\ell}{2c}$, whatever the majorant.

Third, the individual $c_n$ are already allowed to be random
(\Cref{rem:whatsrandom}); what \eqref{eq:aggregate} insists on is a
\emph{deterministic} lower bound for their sum. If even that is unavailable ---
if the best one has is an $\Fc_t$-measurable $\widehat C_t$ --- then bounding
$\widehat C_t$ from below on an event of $\Prob_1$-probability $1-\delta$
yields a version of \eqref{eq:master} with $\delta$ added to the right-hand
side. \Cref{sec:observable} does \emph{not} need this: there the drift is still
asserted, and it is the \emph{variance} that becomes observable.
\end{remark}

\subsection{The i.i.d.\ case}

\begin{corollary}[Product form]\label{cor:iid}
In the situation of \Cref{ex:iidsetting}, let $(E_n)_{n\in\N}$ be i.i.d.\ under
$\Prob_1$ with common law that of a single \evar{} $E$, let
$c:=\E_{P_1}[\log E]$ and let $\psi$ be a majorant for one step, so that
$C_t=tc$ and $\Psi^{(t)}=t\psi$ are an aggregate. Then
\begin{equation}\label{eq:iid}
   \gamma_t(\alpha)\ \le\
   \exp\lp-t\,\psi^{\star}\!\lp\frac{\Delta_t}{t}\rp\rp ,
   \qquad \Delta_t=(tc-\ell)_+ ,
\end{equation}
and when $\psi$ is differentiable and strictly convex the optimising tilt
solves $\psi'(s^{\ast})=\Delta_t/t$.
\end{corollary}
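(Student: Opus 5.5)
The plan is to verify that $(C_t,\Psi^{(t)})=(tc,t\psi)$ is an aggregate in the sense of \Cref{def:main}, apply \Cref{thm:master}(b), and then rescale the Legendre transform.

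\emph{Step 1: drift certificate.} In the setting of \Cref{ex:iidsetting}, take $c_n:\equiv c$. The hypothesis is that the $E_n$ are i.i.d.\ under $\Prob_1$ and that each $E_n$ is independent of $\Fc_{n-1}$; this holds for instance when $E_n=e(X_n)$ for a fixed \evar{} $e$, and I would state that reading explicitly. Under it,
\[
   \E_{\Prob_1}[L_n\mid\Fc_{n-1}]=\E_{P_1}[\log E]=c ,
\]
so \eqref{eq:anchor} holds with equality. If $c=\pm\infty$ the statement is either vacuous or must be excluded, so I take $c\in\R$.

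\emph{Step 2: majorant.} By the same independence, the anchored conditional cumulant generating function $\varphi_n(s)$ of \eqref{eq:cgf} equals the deterministic function $\log\E_{P_1}[e^{-s(\log E-c)}]$. A one-step majorant $\psi$ bounds it, so $\psi_n:=\psi$ satisfies \eqref{eq:majorant}. Summing over $n\le t$ gives $\sum_{n\le t}c_n=tc$ and $\sum_{n\le t}\psi_n=t\psi$ exactly. Hence \eqref{eq:aggregate} holds with $C_t=tc$ and $\Psi^{(t)}=t\psi$, and $\Delta_t=(tc-\ell)_+$.

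\emph{Step 3: rescaling.} By \Cref{thm:master}(b), $\gamma_t(\alpha)\le\exp(-(t\psi)^\star(\Delta_t))$. For $t\ge1$,
\[
   (t\psi)^\star(x)=\sup_{s}\{sx-t\psi(s)\}=t\sup_s\{s\,x/t-\psi(s)\}=t\,\psi^\star(x/t) ,
\]
which is \eqref{eq:iid}. For the tilt, if $\psi$ is differentiable and strictly convex then so is $t\psi$. Then \eqref{eq:sstar} in \Cref{thm:master}(c) reads $t\psi'(s^\ast)=\Delta_t$, that is, $\psi'(s^\ast)=\Delta_t/t$, under the same existence caveats as in part (c).

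The main obstacle is Step 1. It needs the i.i.d.\ assumption to be understood as ``$E_n$ independent of $\Fc_{n-1}$ with a common law'', not merely as i.i.d.\ among the $E_n$ themselves. If the filtration carries extra information, conditional expectations could differ from unconditional ones. I would resolve this by invoking the product structure of \Cref{ex:iidsetting}, with $E_n$ a function of $X_n$ alone. Everything else is bookkeeping.
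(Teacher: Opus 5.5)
Your proposal is correct and follows the paper's proof: apply \Cref{thm:master}(b) with $C_t=tc$ and $\Psi^{(t)}=t\psi$, use the scaling identity $(t\psi)^{\star}(x)=t\,\psi^{\star}(x/t)$ for $t>0$, and read the tilt equation $\psi'(s^{\ast})=\Delta_t/t$ off \Cref{thm:master}(c). Your extra check that the pair is an aggregate --- which needs $E_n$ independent of $\Fc_{n-1}$, for instance $E_n=e(X_n)$ --- is already built into the statement's hypothesis (``so that $C_t=tc$ and $\Psi^{(t)}=t\psi$ are an aggregate''), so it is a harmless clarification rather than a change of route.
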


\begin{remark}[The per-observation exponent]\label{rem:periid}
In \eqref{eq:iid} the exponent per observation is $\psi^{\star}(\Delta_t/t)$ and
$\Delta_t/t\uparrow c$, so it tends to $\psi^{\star}(c)$. By Cram\'er's theorem
(\Cref{thm:app-cramer}) the exact exponent of an i.i.d.\ product is the
Chernoff--Stein exponent $\Lambda(E)=\sup_{s\ge0}-\log\E_{P_1}[E^{-s}]$, as
\Cref{cor:rung6} records,
and $\psi^{\star}(c)\le\Lambda(E)$
with equality precisely when $\psi$ may be taken to be the true conditional
cumulant generating function $s\mapsto\log\E_{P_1}[e^{-s(\log E-c)}]$. That is
the content of \Cref{sec:rung6}: the ladder stops losing at its top rung, and
nowhere below it.
\end{remark}

\subsection{Duality: the bound is an information projection}

\Cref{thm:master} would be of limited interest if it were only a Chernoff
estimate, since one could then always ask whether a different argument does
better from the same hypothesis. It is not. The next proposition identifies the
exponent as \emph{equal} to the value of a constrained information projection,
so that the passage from the hypothesis to the exponent loses nothing. The proof
is a two-line application of the Donsker--Varadhan variational formula
\cite{DV75}, quoted as \Cref{thm:app-dv}, together with its equality case.

\begin{proposition}[Duality]\label{prp:duality}
Let $E$ be a random variable on $(\Omega,\Fc,\Prob_1)$ with
$\Prob_1(0<E<\infty)=1$ and $\E_{\Prob_1}[\log E]$ well defined in
$(-\infty,+\infty]$. For $c\in\R$ put
\[
   \mathcal A_c:=\lC Q\ll\Prob_1\st\E_Q[\log E]\le c\rC ,
   \qquad
   \mathcal D(c):=\inf_{Q\in\mathcal A_c}\KL(Q\|\Prob_1)
   \quad(\inf\emptyset:=+\infty) ,
\]
and, with $\Lambda(s):=\log\E_{\Prob_1}[E^{-s}]\in(-\infty,+\infty]$ for
$s\ge0$,
\[
   \mathcal J(c):=\sup_{s\ge0}\lC-sc-\Lambda(s)\rC .
\]
Then
\begin{equation}\label{eq:duality}
   \mathcal J(c)\ =\ \mathcal D(c)
   \qquad\text{for every }c\in\R .
\end{equation}
Write $s_{\max}:=\sup\lC s\ge0\st\Lambda(s)<\infty\rC$ and, for $s$ with
$\Lambda(s)<\infty$, let $Q_s$ be the \emph{tilted law}
$dQ_s:=E^{-s}e^{-\Lambda(s)}\,d\Prob_1$ and
$m(s):=\E_{Q_s}[\log E]$. Then moreover:
Write $\mu:=\E_{\Prob_1}[\log E]$ and
$x_0:=\Prob_1\text{-}\operatorname{ess\,inf}\log E\in[-\infty,\infty)$. Then
moreover:
\begin{enumerate}[label=(\alph*),leftmargin=2.4em]
\item if $c\ge\mu$ then $\mathcal J(c)=\mathcal D(c)=0$, attained at
      $Q=\Prob_1$;
\item if $c<x_0$ then $\mathcal A_c=\emptyset$ and
      $\mathcal J(c)=\mathcal D(c)=+\infty$;
\item if $c=x_0>-\infty$ then
      $\mathcal J(c)=\mathcal D(c)=\log\frac1{\Prob_1(\log E=x_0)}$, attained
      exactly when $\Prob_1(\log E=x_0)>0$, with unique minimiser
      $\Prob_1(\,\cdot\mid\log E=x_0)$;
\item if $x_0<c<\mu$ and $s_{\max}=0$ --- that is, if Cram\'er's condition
      \Cref{hyp:cramer} fails at every tilt --- then
      $\mathcal J(c)=\mathcal D(c)=0$ and the infimum is not attained;
\item if $x_0<c<\mu$ and $s_{\max}>0$ then the common value lies in
      $(0,+\infty]$, and the infimum defining $\mathcal D(c)$ is attained if and
      only if $m(s^{\ast})=c$ for some $s^{\ast}$ with
      $\Lambda(s^{\ast})<\infty$, in which case $Q_{s^{\ast}}$ is the unique
      minimiser and
      \begin{equation}\label{eq:duality-attained}
         \mathcal D(c)=\KL(Q_{s^{\ast}}\|\Prob_1)=-s^{\ast}c-\Lambda(s^{\ast}) .
      \end{equation}
\end{enumerate}
\end{proposition}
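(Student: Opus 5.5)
The plan is to prove the two inequalities $\mathcal J(c)\le\mathcal D(c)$ and $\mathcal D(c)\le\mathcal J(c)$ separately. The first, weak duality, is the Donsker--Varadhan formula (\Cref{thm:app-dv}) applied to the test function $f=-s\log E$. Fix $s\ge0$ with $\Lambda(s)<\infty$ (otherwise the term $-sc-\Lambda(s)=-\infty$ contributes nothing). For $Q\in\mathcal A_c$ with $\KL(Q\|\Prob_1)<\infty$ the formula gives
\[
\KL(Q\|\Prob_1)\ \ge\ \E_Q[-s\log E]-\Lambda(s)\ \ge\ -sc-\Lambda(s).
\]
The last step uses $s\ge0$ and $\E_Q[\log E]\le c$, and this is where the one-sided range of tilts is needed. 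The one technical point is that $\E_Q[\log E]$ must be well defined. I would handle it by applying DV to the truncation $-s\max(\log E,-M)$ and letting $M\to\infty$ by monotone convergence, using that $\E_Q[(\log E)^-]<\infty$ follows from finite KL together with $\Lambda(s)<\infty$. Taking the supremum over $s$ and the infimum over $Q$ then gives $\mathcal J\le\mathcal D$.

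For the reverse inequality and the case analysis, I would go through the cases in the order stated.

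(a) If $c\ge\mu$, then $Q=\Prob_1$ is feasible, so $\mathcal D=0$. Also $\mathcal J\ge0$ from $s=0$, so both values are $0$.

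(b) If $c<x_0$, every $Q\ll\Prob_1$ has $\log E\ge x_0>c$ $Q$-a.s., so $\mathcal A_c=\emptyset$. On the dual side, $\Lambda(s)\le -sx_0$, so $-sc-\Lambda(s)\ge s(x_0-c)\to\infty$.

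(c) If $c=x_0$, feasibility forces $Q$ to be concentrated on $\{\log E=x_0\}$. The minimal KL under that constraint is $-\log p$, where $p=\Prob_1(\log E=x_0)$, and it is attained uniquely at the conditional law, by the standard computation from Jensen's inequality. On the dual side, $-sx_0-\Lambda(s)=-\log\E[e^{-s(\log E-x_0)}]$, which increases to $-\log p$ by monotone convergence.

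(d) If $s_{\max}=0$, then $\mathcal J=0$, and I must exhibit $Q\in\mathcal A_c$ with arbitrarily small KL. I would take a mixture $Q=(1-\varepsilon)\Prob_1+\varepsilon\,\Prob_1(\cdot\mid \log E\le -K)$, normalised appropriately. Heavy-tailedness in the sense $\E[E^{-s}]=\infty$ for all $s>0$ makes the conditional mean $\E[\log E\mid\log E\le -K]$ decrease fast enough relative to $\log(1/\Prob_1(\log E\le-K))$. So one can move the mean below $c$ at KL cost roughly $\varepsilon\log(1/\Prob_1(\log E\le -K))$, which can be made arbitrarily small. Since $\mathcal D>0$ would otherwise contradict this, the infimum is not attained: any $Q$ with $\KL=0$ equals $\Prob_1$, which is infeasible.

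(e) If $s_{\max}>0$, the tilted family $Q_s$ has $m(s)=-\Lambda'(s)$, which is nonincreasing and continuous on $(0,s_{\max})$. If $m(s^\ast)=c$, then
\[
\KL(Q_{s^\ast}\|\Prob_1)=-s^\ast m(s^\ast)-\Lambda(s^\ast)=-s^\ast c-\Lambda(s^\ast)\le\mathcal J(c),
\]
which closes the gap. Uniqueness of the minimiser comes from the DV equality case.

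If no such $s^\ast$ exists, then either $c<\lim_{s\uparrow s_{\max}}m(s)$ or $m$ jumps past $c$. In that situation I would approximate using mixtures of $Q_{s}$ for $s$ near $s_{\max}$ with a heavy-tail perturbation as in (d), to show $\mathcal D\le\mathcal J$. Non-attainment follows because any minimiser would have to saturate both inequalities in weak duality for the maximising $s$, and that forces $Q=Q_s$ with $m(s)=c$.

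The main obstacle is exactly this non-attained regime of (d) and (e), where $c$ lies beyond the range of tilted means. There the equality $\mathcal D=\mathcal J$ needs an explicit approximating sequence rather than a tilt. My first attempt would be to treat both cases uniformly by a lower-semicontinuity and convexity argument: $\mathcal D$ is convex and nonincreasing in $c$, and $\mathcal J$ is its convex conjugate representation restricted to $s\ge0$. Fenchel--Moreau would then give equality wherever $\mathcal D$ is lower semicontinuous, which I would verify from lower semicontinuity of KL together with the truncation argument above.
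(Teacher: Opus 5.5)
Your outline follows the paper's proof closely: weak duality from Donsker--Varadhan at $\zeta=-s\log E$, direct treatment of (a)--(c), the tilt $Q_{s^\ast}$ together with the Donsker--Varadhan equality case for attainment and uniqueness, and mixing with far lower-tail conditional laws where the projection is not attained. Two steps, however, fail as written.

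\emph{Case (d).} The bulk of your mixture cannot be $\Prob_1$ itself. The statement allows $\mu=+\infty$, and this is compatible with $s_{\max}=0$ and $x_0<c<\mu$: take $\Prob_1(\log E=-k)\propto k^{-3}$ and $\Prob_1(\log E=k)\propto k^{-2}$ for $k\in\N$. Then under every $(1-\varepsilon)\Prob_1+\varepsilon R$ with $\varepsilon<1$, $\log E$ has mean $+\infty$, so the mixture lies outside $\mathcal A_c$ whatever $R$ is. The paper uses $P^K:=\Prob_1(\,\cdot\mid\log E\le K)$ as the bulk instead. It chooses $K$ with $c<m_K:=\E_{P^K}[\log E]<\infty$, which is possible because $\E_{\Prob_1}[(\log E)^-]<\infty$, and pays $\log\frac1{\Prob_1(\log E\le K)}\to0$. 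Your quantitative mechanism is also misattributed. What $s_{\max}=0$ provides is not a fast-falling conditional mean, which is $\le-M$ trivially. It provides, for each $\delta>0$, a \emph{subsequence} $M_j\uparrow\infty$ with $\log\frac1{\Prob_1(\log E\le-M_j)}\le\delta M_j$. Put weight $\theta_j=\frac{m_K-c}{m_K+M_j}$ on $\Prob_1(\,\cdot\mid\log E\le-M_j)$. The mixture then has mean $\le c$ and relative entropy at most $\log\frac1{\Prob_1(\log E\le K)}+\theta_j\delta M_j$. Let $j\to\infty$, then $\delta\downarrow0$, then $K\uparrow\infty$.

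\emph{Case (e), non-attained part.} You skip the boundary analysis (the paper's Step~7), and both your upper bound and your non-attainment argument depend on it. Assume $m>c$ on $(0,s_{\max})$, so that $g(s):=-sc-\Lambda(s)$ increases. You need two facts.
First, $s_{\max}<\infty$. Otherwise, for $c'\in(x_0,c)$, $g(s)\le-s(c-c')-\log\Prob_1(\log E\le c')\to-\infty$, contradicting that $g$ increases from $g(0)=0$.
Second, $\Lambda(s_{\max})<\infty$. Otherwise $g\to-\infty$ by left-continuity of $\Lambda$. Hence $\mathcal J(c)=g(s_{\max})$.
Without the first fact, the tail rate $s_{\max}+\delta$ of your perturbation is meaningless. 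Without the second, there is no \emph{maximising $s$} at which to saturate the two weak-duality inequalities, so your non-attainment step has nothing to act on. Your alternative that $m$ jumps past $c$ cannot occur: once $\Lambda(s_{\max})<\infty$, $m$ is left-continuous at $s_{\max}$, by monotone convergence on $\{\log E<0\}$ and dominated convergence on $\{\log E\ge0\}$.

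Once $s_{\max}<\infty$ is known, your variant does give $\mathcal D\le\mathcal J$. Mixing $Q_s$ with $s<s_{\max}$, rather than $Q_{s_{\max}}$, bounds the relative entropy by $g(s)+(s_{\max}-s+\delta)(m(s)-c)$, and $m$ is bounded on $[s_1,s_{\max})$. This spares the upper bound the existence of $Q_{s_{\max}}$, but non-attainment still needs it.

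\emph{The Fenchel--Moreau route.} It is sound for the identity on $(x_0,\infty)$, but lower semicontinuity there comes free from convexity and finiteness of $\mathcal D$ (use $\Prob_1(\,\cdot\mid\log E\le c')$ with $c'\in(x_0,c)$). An argument through lower semicontinuity of $\KL$ would stall: the constraint $\E_Q[\log E]\le c$ is not closed, because mass can escape to $-\infty$, which is the very mechanism of non-attainment. That route also says nothing about attainment. Identifying the conjugate of $\mathcal D$ with $\Lambda$ also needs the full supremum form of Donsker--Varadhan via truncated Gibbs measures, including at tilts where $\Lambda(s)=\infty$.
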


\begin{remark}[When the projection is attained, and when it is not]
\label{rem:steep}
Part (e) of \Cref{prp:duality} is the case the rest of this paper lives in, and
its tilt equation $m(s^{\ast})=c$ is solvable for every
$c\in(x_0,\mu)$ as soon as either $s_{\max}=\infty$, or $s_{\max}<\infty$ and
$\Lambda$ is \emph{steep} there, meaning $\Lambda(s)\to\infty$ as
$s\uparrow s_{\max}$; this is the condition \cite[\S2.3]{DZ10} calls essential
smoothness. Exactly one configuration escapes it:
\begin{equation}\label{eq:nonsteep}
   0<s_{\max}<\infty,
   \qquad
   \E_{\Prob_1}\lB\lvert\log E\rvert\,E^{-s_{\max}}\rB<\infty ,
\end{equation}
which says that the extreme tilt $Q_{s_{\max}}$ exists \emph{and} has a finite
mean, so that $m(s_{\max})>-\infty$; for $c<m(s_{\max})$ the infimum is then not
attained. Note that \eqref{eq:nonsteep} is strictly stronger than
$\Lambda(s_{\max})<\infty$ alone: for
$\Prob_1(\log E=-k)\propto e^{-k}k^{-2}$ one has $s_{\max}=1$ and
$\Lambda(1)<\infty$, yet $m(1)=-\infty$, so the tilt equation is solvable at
every $c<\mu$ after all.

When \eqref{eq:nonsteep} does hold and $c<m(s_{\max})$, \eqref{eq:duality} still
holds, but a minimising sequence must mix $Q_{s_{\max}}$ with a vanishing amount
of mass placed arbitrarily far out in the lower tail, and no single $Q$ achieves
the value. This is the exact analogue, at the level of measures, of the
non-attainment in \cite[Prop.~4.24]{CEIL}.
\end{remark}

\begin{remark}[What duality buys]\label{rem:duality}
Read with $c$ the certified drift of one step, \eqref{eq:duality} says the
exponent \Cref{thm:master} extracts from \eqref{eq:anchor} alone is exactly the
cost, in relative entropy, of the cheapest alternative law under which the drift
fails. So the losses in \Cref{sec:ladder} are attributable entirely to the
majorant: they measure how
much a particular tail hypothesis fails to pin down the conditional law, not any
slack in the reduction.
\end{remark}

\section{The ladder}
\label{sec:ladder}

Each subsection below fixes one hypothesis on the lower tail of the increments,
records the majorant it supplies, and reads off the bound. The pattern is
always the same:
\[
   \textbf{hypothesis}\ \Longrightarrow\ \textbf{majorant }\psi_n\
   \Longrightarrow\ \textbf{rate function }(\Psi^{(t)})^{\star}\
   \Longrightarrow\ \textbf{optimising tilt }s^{\ast}_t .
\]
Only the first arrow requires an argument; the other two are calculus. The
arguments are in \Cref{app:proofs} and the classical inequalities behind them in
\Cref{app:concentration}, so that each subsection here can be read on its own.
Throughout, $(c_n)_{n\in\N}$ is a drift certificate and $(C_t,\Psi^{(t)})$ an
aggregate in the sense of \Cref{def:main}, and $\Delta_t=(C_t-\ell)_+$ is the
deviation of \Cref{def:rate}. Each hypothesis below introduces per-step
quantities of its own --- $\rho_n$, $K_n$, $M_{p,n}$, $b_n$, $w_n$,
$\sigma_n^{2}$, $r_n$. They are written as constants for readability, but by
\Cref{rem:whatsrandom} each may be a predictable random variable instead,
provided the deterministic aggregate dominates the corresponding sums
$\Prob_1$-a.s. Two rungs need more care than that, because their majorants
involve $c_n$ itself: \Cref{rem:rung1nodrift} and \Cref{rem:predictable} treat
them.

\begin{remark}[Two readings of a rung]\label{rem:tworeadings}
A rung can be read as a guarantee or as a design rule. As a guarantee: if the
increments satisfy this, the type-II error is at most that. As a design rule:
the hypotheses are conditions on the \emph{approximation} to the likelihood
ratio one chooses to bet with, and every one of them can be enforced by
construction --- by clipping (\Cref{ex:clipping}), by bounding a stake
(\Cref{ex:betting}), or by using a likelihood ratio in the first place
(\Cref{rem:lrfree}). The ladder is therefore a menu, and the question it answers
is which price one is willing to pay in \epower{} for which exponent.
\end{remark}

\subsection{Rung 1: a single negative moment}
\label{sec:rung1}

The weakest hypothesis that yields anything at all is Cram\'er's condition: one
finite negative moment of each \evar{} under the alternative, at a tilt $s_0$
fixed once and for all.

\begin{hypothesis}[One negative moment]\label{hyp:cramer}
There are a constant $s_0\in(0,\infty)$ and constants $\rho_n\in(0,\infty)$
such that
\begin{equation}\label{eq:h1}
   \E_{\Prob_1}\lB E_n^{-s_0}\mid\Fc_{n-1}\rB\ \le\ \rho_n
   \qquad\Prob_1\text{-a.s., for every }n\in\N .
\end{equation}
\end{hypothesis}

\begin{corollary}[Rung 1]\label{cor:rung1}
Assume \Cref{hyp:cramer} and put
\[
   \kappa_n\ :=\ \log\rho_n+s_0c_n\ \in[0,\infty) ,
\]
which is nonnegative by conditional Jensen; let $K_t\in[0,\infty)$ be constants
with $K_t\ge\sum_{n\le t}\kappa_n$. Then the \emph{chord}
\begin{equation}\label{eq:maj-rung1}
   \psi_n(s):=\frac{s}{s_0}\,\kappa_n\quad(0\le s\le s_0),
   \qquad \psi_n(s):=+\infty\quad(s>s_0),
\end{equation}
is a majorant, its aggregate is $\Psi^{(t)}(s)=\frac{s}{s_0}K_t$ on $[0,s_0]$
and $+\infty$ beyond, the rate function is the linear-with-a-kink
\begin{equation}\label{eq:rate-rung1}
   \lp\Psi^{(t)}\rp^{\star}(x)\ =\ \lp s_0x-K_t\rp_+ ,
\end{equation}
the optimising tilt is $s^{\ast}_t=s_0$ when $s_0\Delta_t>K_t$ and
$s^{\ast}_t=0$ otherwise, and
\begin{equation}\label{eq:bound-rung1}
   \bar\gamma_t(\alpha)\ \le\ \gamma_t(\alpha)\ \le\
   \min\lC 1,\ \exp\lp K_t-s_0\Delta_t\rp\rC .
\end{equation}
If the $\rho_n$ are constants, so that $K_t=\sum_{n\le t}\kappa_n$ and
$C_t=\sum_{n\le t}c_n$ are admissible together, the bound reads in its most
transparent form
\begin{equation}\label{eq:bound-rung1b}
   \gamma_t(\alpha)\ \le\ \min\lC 1,\ \alpha^{-s_0}\prod_{n\le t}\rho_n\rC .
\end{equation}
\end{corollary}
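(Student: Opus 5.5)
The plan is to verify the one nontrivial arrow, hypothesis $\Rightarrow$ majorant, by a convexity (chord) argument, and then read everything else off \Cref{thm:master}. Fix $n$ and work conditionally on $\Fc_{n-1}$. By \Cref{def:main}(b), $s\mapsto\varphi_n(s)=\log\E_{\Prob_1}[e^{-s(L_n-c_n)}\mid\Fc_{n-1}]$ is convex in $s$ with $\varphi_n(0)=0$. At the endpoint, \eqref{eq:h1} gives
\[
  \varphi_n(s_0)=\log\E_{\Prob_1}\lB E_n^{-s_0}\mid\Fc_{n-1}\rB+s_0c_n\le\log\rho_n+s_0c_n=\kappa_n .
\]
Convexity on $[0,s_0]$ then yields $\varphi_n(s)\le(1-\frac{s}{s_0})\varphi_n(0)+\frac{s}{s_0}\varphi_n(s_0)\le\frac{s}{s_0}\kappa_n$. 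For $s>s_0$ the bound $\psi_n=+\infty$ holds trivially, so we may take $s_{\max}=\infty$. The map $\psi_n$ is predictable because $\kappa_n$ is $\Fc_{n-1}$-measurable and $\psi_n(0)=0$. Since the majorant involves $c_n$, I would check the combined inequality \eqref{eq:combined} directly, as \Cref{rem:onlyaggregate} requires. If the $\rho_n$ are constants, $s\sum c_n-\frac{s}{s_0}\sum\kappa_n=-\frac{s}{s_0}\sum\log\rho_n$ is deterministic, and it dominates $sC_t-\frac{s}{s_0}K_t$ whenever $C_t,K_t$ are chosen as the corresponding sums.

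The step I expect to need the most care is the sign claim $\kappa_n\ge0$, needed so that $\psi_n$ takes values in $[0,\infty]$. Conditional Jensen gives $\log\rho_n\ge-s_0\E_{\Prob_1}[L_n\mid\Fc_{n-1}]$, hence $\kappa_n\ge s_0\bigl(c_n-\E_{\Prob_1}[L_n\mid\Fc_{n-1}]\bigr)$. This is $\ge0$ when the certificate is the conditional drift itself, but not automatically when $c_n$ is strictly smaller. I would handle this robustly by replacing $\kappa_n$ with $(\kappa_n)_+$ where necessary. This is still a majorant, because $\frac{s}{s_0}\kappa_n\le\frac{s}{s_0}(\kappa_n)_+$. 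Any constant $K_t\ge\sum(\kappa_n)_+\ge\sum\kappa_n$ is then admissible, so the stated conclusions are unaffected.

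Aggregation is immediate: $\sum_{n\le t}\psi_n(s)=\frac{s}{s_0}\sum\kappa_n\le\frac{s}{s_0}K_t=\Psi^{(t)}(s)$ on $[0,s_0]$, and $+\infty$ beyond. The Legendre transform is
\[
  \sup_{0\le s\le s_0}s\lp x-\tfrac{K_t}{s_0}\rp ,
\]
a supremum of a linear function over an interval, so it is attained at $s=s_0$ if $s_0x>K_t$ and at $s=0$ otherwise. This gives \eqref{eq:rate-rung1} and the stated optimising tilt. Then \eqref{eq:master} gives $\gamma_t(\alpha)\le\exp(-(s_0\Delta_t-K_t)_+)=\min\{1,e^{K_t-s_0\Delta_t}\}$, and \Cref{lem:ordering} passes the bound to $\bar\gamma_t$, which is \eqref{eq:bound-rung1}.

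For \eqref{eq:bound-rung1b}, with $K_t=\sum\kappa_n$ and $C_t=\sum c_n$ we have $K_t-s_0(C_t-\ell)=\sum\log\rho_n+s_0\ell$, whose exponential is $\alpha^{-s_0}\prod\rho_n$. When $\Delta_t=C_t-\ell>0$ this is the bound just obtained. When $C_t\le\ell$ I would not go through $\Delta_t$ at all. Instead, apply \eqref{eq:negmom} at $s=s_0$ together with Markov, $\gamma_t(\alpha)\le\alpha^{-s_0}\E[W_t^{-s_0}]\le\alpha^{-s_0}e^{-s_0C_t+\Psi^{(t)}(s_0)}=\alpha^{-s_0}\prod\rho_n$. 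This covers both cases without the positive-part truncation.
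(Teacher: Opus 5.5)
Your argument follows the paper's proof: the chord of the convex $\varphi_n$ through $(0,0)$ and $(s_0,\varphi_n(s_0))$, the Legendre transform of a function that is linear on $[0,s_0]$, \Cref{thm:master}(b) with \Cref{lem:ordering}, and \eqref{eq:negmom} at $s=s_0$ plus Markov for \eqref{eq:bound-rung1b}.

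Your doubt about the sign of $\kappa_n$ is justified. Conditional Jensen gives only $\kappa_n\ge s_0\bigl(c_n-\E_{\Prob_1}[L_n\mid\Fc_{n-1}]\bigr)$, which is a nonpositive lower bound. The paper's proof reaches $\kappa_n\ge0$ only by asserting $\exp(-s_0\E_{\Prob_1}[L_n\mid\Fc_{n-1}])\ge e^{-s_0c_n}$, which is the reverse of what \eqref{eq:anchor} gives. For example, $L_n\equiv1$ $\Prob_1$-a.s., $c_n=0$ and $\rho_n=e^{-s_0}$ meet every hypothesis and give $\kappa_n=-s_0$.

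Your repair has two problems, though.
\begin{enumerate}
\item Truncating to $(\kappa_n)_+$ forces $K_t\ge\sum_{n\le t}(\kappa_n)_+$. This can strictly exceed the $\max\{0,\sum_{n\le t}\kappa_n\}$ the statement allows, so the conclusions are not literally ``unaffected.''
\item Your derivation of \eqref{eq:bound-rung1b} then evaluates $\Psi^{(t)}(s_0)=\sum_{n\le t}\kappa_n$. The truncated majorant no longer licenses this once some $\kappa_n<0$.
\end{enumerate}

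The clean fix is to keep the signed chord. Step~0 and part~(a) of the proof of \Cref{thm:master} use only \eqref{eq:majorant} and predictability, never $\psi_n\ge0$. Part~(b) uses $\Psi^{(t)}\ge0$ only to dispose of the case $C_t<\ell$, and $K_t\ge0$ already covers that. So $\frac{s}{s_0}\kappa_n$ satisfies \eqref{eq:majorant} and yields \eqref{eq:bound-rung1} for every $K_t\ge\max\{0,\sum_{n\le t}\kappa_n\}$. It is merely not $[0,\infty]$-valued as \Cref{def:main} demands when $\kappa_n<0$.

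Likewise, \eqref{eq:negmom} at $s_0$ gives $\E_{\Prob_1}[W_t^{-s_0}]\le\prod_{n\le t}\rho_n$ whatever the signs. Equivalently, the supermartingale $W_t^{-s_0}\prod_{n\le t}\rho_n^{-1}$ of \Cref{rem:rung1nodrift} gives this from \eqref{eq:h1} alone. The only part of the statement that genuinely fails is the claim $\kappa_n\in[0,\infty)$.
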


\begin{remark}[Rung 1 needs no drift certificate]\label{rem:rung1nodrift}
This is one of the two rungs whose majorant involves $c_n$, so it is one of the
two where the halves of \eqref{eq:aggregate} may \emph{not} be chosen
independently: $\kappa_n$ increases with $c_n$, while \eqref{eq:aggregate} wants
$\sum c_n$ bounded below. What happens is that the drift cancels identically in
the combination \eqref{eq:combined}: for $s\in[0,s_0]$,
\[
   s\sum_{n\le t}c_n-\sum_{n\le t}\psi_n(s)
   \ =\ \frac{s}{s_0}\sum_{n\le t}\log\frac1{\rho_n} ,
\]
so a pair $(C_t,K_t)$ is admissible precisely when
$s_0C_t-K_t\le\sum_{n\le t}\log\frac1{\rho_n}$ $\Prob_1$-a.s.

Read directly, this says that rung~1 consumes \Cref{hyp:cramer} and nothing
else. Indeed $N_t:=W_t^{-s_0}\prod_{n\le t}\rho_n^{-1}$ is a nonnegative
$\Prob_1$-supermartingale with $N_0=1$ by \eqref{eq:h1} alone, since $\rho_t$ is
predictable; so for \emph{any} predictable $(\rho_n)_{n\in\N}$ and any
deterministic $\bar\rho_t$ with $\prod_{n\le t}\rho_n\le\bar\rho_t$
$\Prob_1$-a.s.,
\begin{equation}\label{eq:bound-rung1c}
   \gamma_t(\alpha)\ \le\ \min\lC1,\ \alpha^{-s_0}\bar\rho_t\rC ,
\end{equation}
with no drift certificate at all. The certificate re-enters only to interpret
the bound --- through $\kappa_n\ge0$ and the critical horizon \eqref{eq:crit}.
\end{remark}

\begin{remark}[The rung with a critical horizon]\label{rem:rung1-horizon}
The rate \eqref{eq:rate-rung1} vanishes identically for $x\le K_t/s_0$: below
that gap the bound is not merely loose but empty. In the homogeneous case
$\rho_n\equiv\rho<1$, $c_n\equiv c$ and $\kappa=\log\rho+s_0c$, the per-step
rate is $\log\frac1\rho=s_0c-\kappa>0$ and \eqref{eq:bound-rung1b} is
\[
   \gamma_t(\alpha)\ \le\
   \min\lC1,\ e^{-\lp(s_0c-\kappa)t-s_0\ell\rp}\rC ,
\]
geometric, but trivial before the \emph{critical horizon}
\begin{equation}\label{eq:crit}
   t_0\ :=\ \frac{s_0\ell}{s_0c-\kappa}\ \ge\ \frac{\ell}{c} ,
\end{equation}
the inequality because $\kappa\ge0$. Every later rung
supplies a majorant with $\psi_n(s)=o(s)$ as $s\downarrow0$, whose transform is
strictly positive at every $x>0$; that is the single structural difference
between rung~1 and the rest, and \Cref{rem:vacuity} makes it precise.
\end{remark}

\begin{remark}[A likelihood ratio satisfies this for free]\label{rem:lrfree}
In the i.i.d.\ setting of \Cref{ex:iidsetting} with $E=R=\frac{dP_1}{dP_0}$,
\[
   \E_{P_1}\lB R^{-1}\rB\ =\ \int_{\{p_1>0\}}\frac{p_0}{p_1}\,p_1\,d\mu
   \ =\ P_0(p_1>0)\ \le\ 1 ,
\]
so \Cref{hyp:cramer} holds with $s_0=1$ and $\rho=P_0(R>0)$, with no assumption
whatsoever. The hypotheses of this paper are therefore not assumptions about the
testing problem; they are assumptions about how far the \evar{} one actually
bets with departs from the likelihood ratio. \Cref{sec:discussion} returns to this.
\end{remark}

\begin{remark}[Uniformity in $n$ cannot be dropped]\label{rem:uniformity}
Suppose one knows only $\rho_n<1$ for each $n$. Then $\sum_{n\le t}\log\rho_n$
need not grow linearly: for $\rho_n=e^{-1/n}$ it is $-H_t\sim-\log t$, and
\eqref{eq:bound-rung1b} delivers only $\gamma_t(\alpha)\le\alpha^{-s_0}/t$,
a polynomial rate, although every single \evar{} contributes strictly. A
geometric rate is a statement about the Ces\`aro means of
$\log\frac1{\rho_n}$ staying away from zero, not about each factor being useful.
\end{remark}

\subsection{Rung 2: a small crash probability}
\label{sec:rung2}

Cram\'er's condition has an equivalent form that is directly interpretable:
$\E_{\Prob_1}[E_n^{-s_0}\mid\Fc_{n-1}]<\infty$ for some $s_0>0$ if and only if
the probability that the \evar{} pays less than $\varepsilon$ is polynomially
small in $\varepsilon$ (\Cref{prp:cramer-equiv}). We call the exponent the
\emph{crash exponent}. Rung~1 spends this hypothesis alone and pays for it with
a critical horizon; rung~2 buys the critical horizon back with a single further
moment --- and the moment is needed on the \emph{winning} side.

\begin{hypothesis}[Crash tail and a one-sided $p$-th moment]\label{hyp:crash}
There are constants $a\in(0,\infty)$, $p\in(1,2]$ and $K_n,M_{p,n}\in[0,\infty)$
such that, $\Prob_1$-a.s.\ for every $n\in\N$,
\begin{equation}\label{eq:h2}
   \Prob_1\lp L_n\le c_n-x\mid\Fc_{n-1}\rp\ \le\ K_ne^{-ax}
   \qquad\text{for all }x\ge0 ,
\end{equation}
equivalently $\Prob_1(E_n\le\varepsilon e^{c_n}\mid\Fc_{n-1})\le
K_n\varepsilon^{a}$ for $\varepsilon\in(0,1]$, and
\begin{equation}\label{eq:h2b}
   \E_{\Prob_1}\lB\lp(L_n-c_n)_+\rp^{p}\mid\Fc_{n-1}\rB\ \le\ M_{p,n} .
\end{equation}
\end{hypothesis}

\begin{corollary}[Rung 2]\label{cor:rung2}
Assume \Cref{hyp:crash} and let $K\ge\sum_{n\le t}K_n$ and
$M_p\ge\sum_{n\le t}M_{p,n}$ be constants. Then, with $s_{\max}=a$,
\begin{equation}\label{eq:maj-rung2}
   \psi_n(s)\ :=\ c_p\,M_{p,n}\,s^{p}+\frac{K_n\,s^{2}}{a(a-s)},
   \qquad
   c_p:=\sup_{v>0}\frac{e^{-v}-1+v}{v^{p}}\ \le\ 2^{1-p},
   \quad c_2=\tfrac12,
\end{equation}
is a majorant --- the first term absorbs the winning side, the second the
crash side through the layer-cake formula, as \Cref{lem:maj-crash} shows ---
with aggregate $\Psi^{(t)}(s)=c_pM_ps^{p}+\frac{Ks^{2}}{a(a-s)}$, and
\begin{equation}\label{eq:bound-rung2}
   \bar\gamma_t(\alpha)\ \le\ \gamma_t(\alpha)\ \le\
   \exp\lp-\sup_{0\le s<a}
      \lC s\Delta_t-c_pM_ps^{p}-\frac{Ks^{2}}{a(a-s)}\rC\rp
   \ \le\ \exp\lp-\tfrac12\,s_{\ast}\Delta_t\rp ,
\end{equation}
where the explicit admissible tilt is
\begin{equation}\label{eq:sstar-rung2}
   s_{\ast}\ :=\ \min\lC
      \lp\frac{\Delta_t}{4c_pM_p}\rp^{\frac1{p-1}},\
      \frac{a^{2}\Delta_t}{8K},\
      \frac a2\rC .
\end{equation}
If $K>0$ and $\Delta_t>0$, the exact optimising tilt is the unique
$s^{\ast}_t\in(0,a)$ solving
\begin{equation}\label{eq:sstar-rung2-exact}
   p\,c_pM_p\,s^{p-1}+\frac{K\,s(2a-s)}{a(a-s)^{2}}\ =\ \Delta_t ,
\end{equation}
which has no closed form. The point of \eqref{eq:sstar-rung2} is that it is
explicit; what it costs is a bounded factor in the exponent, namely
\[
   \frac{\sup_{0\le s<a}\lC s\Delta_t-\Psi^{(t)}(s)\rC}
        {\tfrac12s_{\ast}\Delta_t}
   \ \le\ \max\lC4,\ \frac{2(p-1)}{p}\lp\frac4p\rp^{\frac1{p-1}}\rC ,
\]
which is $2$ in the moment-limited regime at $p=2$ and at most $4$ when the
crash term dominates, but which degrades as $p\downarrow1$: it is $4.7$ at
$p=\frac32$ and $137$ at $p=\frac65$. For small $p$ one should optimise
\eqref{eq:sstar-rung2-exact} numerically rather than use $s_{\ast}$.
\end{corollary}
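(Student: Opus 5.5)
The plan is to verify the majorant inequality \eqref{eq:majorant} for each step and then let \Cref{thm:master} do the rest; the explicit tilt and the loss factor are calculus on the resulting aggregate.

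\textbf{Step 1: the majorant.} Write $Y:=L_n-c_n$, so the quantity to bound is $\E[e^{-sY}\mid\Fc_{n-1}]$, conditionally on $\Fc_{n-1}$. Since $\E[Y\mid\Fc_{n-1}]\ge0$ by \eqref{eq:anchor} and $s\ge0$,
\[
\E\bigl[e^{-sY}\bigr]\le 1+\E\bigl[e^{-sY}-1+sY\bigr],
\]
because the dropped term is $-s\,\E[Y]\le0$. Set $g(v):=e^{-v}-1+v\ge0$ and split on the sign of $Y$.
\begin{itemize}
\item \emph{Winning side} $Y>0$. Here $v=sY>0$, so $g(sY)\le c_p(sY)^p$ by the definition of $c_p$. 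Taking expectations and using \eqref{eq:h2b} gives the contribution $c_pM_{p,n}s^p$. The bound $c_p\le 2^{1-p}$ and the value $c_2=\tfrac12$ are elementary: use $g(v)\le\min(v,v^2/2)$ and interpolate between the two.
\item \emph{Crash side} $Y\le0$. Write $Y=-X$ with $X\ge0$, so the integrand is $h(X):=e^{sX}-1-sX$, with $h(0)=h'(0)=0$. The layer-cake formula gives $\E[h(X)]=\int_0^\infty h'(x)\Prob(X>x)\,dx$. By \eqref{eq:h2}, $\Prob(X>x)\le K_ne^{-ax}$, so the contribution is at most
\[
K_n\int_0^\infty s\bigl(e^{sx}-1\bigr)e^{-ax}\,dx=K_n s\Bigl(\frac1{a-s}-\frac1a\Bigr)=\frac{K_ns^2}{a(a-s)} \qquad(s<a).
\]
This is the content of \Cref{lem:maj-crash}.
\end{itemize}
Finally apply $\log(1+u)\le u$. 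The resulting $\psi_n$ is predictable, and it vanishes at $0$.

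\textbf{Step 2: aggregate and master bound.} Summing over $n\le t$ and using the monotonicity of $\psi_n$ in $K_n$ and $M_{p,n}$ gives the aggregate $\Psi^{(t)}$ as stated. \Cref{thm:master}(b) then yields the first inequality of \eqref{eq:bound-rung2}.

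\textbf{Step 3: the explicit tilt.} Plug in $s=s_\ast$ and show $\Psi^{(t)}(s_\ast)\le\tfrac12 s_\ast\Delta_t$.
\begin{itemize}
\item Since $s_\ast\le(\Delta_t/(4c_pM_p))^{1/(p-1)}$, we get $c_pM_ps_\ast^{p}\le\tfrac14 s_\ast\Delta_t$.
\item Since $s_\ast\le a/2$, we have $a-s_\ast\ge a/2$, so $\frac{Ks_\ast^2}{a(a-s_\ast)}\le \frac{2Ks_\ast^2}{a^2}$.
\item Since $s_\ast\le a^2\Delta_t/(8K)$, this last quantity is at most $\tfrac14 s_\ast\Delta_t$.
\end{itemize}
Adding the two bounds proves the last inequality. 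If $M_p=0$ or $K=0$, the corresponding entry of the minimum is read as $+\infty$.

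\textbf{Step 4: the exact tilt.} Differentiating $s\Delta_t-\Psi^{(t)}(s)$ gives \eqref{eq:sstar-rung2-exact}. The left-hand side is continuous and strictly increasing, since both terms are, the second strictly when $K>0$. It vanishes at $0$ because $p>1$ and tends to $\infty$ as $s\uparrow a$. A unique root therefore exists when $\Delta_t>0$, and by \Cref{thm:master}(c) it is the maximiser.

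\textbf{Step 5: the loss factor.} This is the main obstacle. Bound the supremum from above using only the active constraint, since dropping the other (nonnegative) term of $\Psi^{(t)}$ only increases the supremum.
\begin{itemize}
\item \emph{Moment term active.} $\sup_s\{s\Delta_t-c_pM_ps^p\}$ is computed explicitly. Dividing by $\tfrac12 s_\ast\Delta_t$ with $s_\ast=(\Delta_t/(4c_pM_p))^{1/(p-1)}$ gives the factor $\frac{2(p-1)}{p}\bigl(\frac4p\bigr)^{1/(p-1)}$.
\item \emph{Crash term or $a/2$ active.} Use $\frac{Ks^2}{a(a-s)}\ge Ks^2/a^2$ to bound the supremum by $a^2\Delta_t^2/(4K)$, together with the trivial bound $s\Delta_t\le a\Delta_t$. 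The ratio to $\tfrac12 s_\ast\Delta_t$ is then at most $4$.
\end{itemize}
I expect the case where $a/2$ is the binding constraint to need the most care: there one must check that $\sup_{s<a}\{s\Delta_t-Ks^2/(a(a-s))\}\le 2\cdot\tfrac a2\Delta_t$, which holds trivially since $s<a$.
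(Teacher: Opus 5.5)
Your proposal is correct and follows the paper's proof essentially step for step: the sign split, with $c_p$ on the winning side and the layer-cake integral on the crash side, is \Cref{lem:maj-crash} up to your opposite sign convention for $Y$, and \Cref{thm:master}(b),(c) and the evaluation at $s_\ast$ then proceed exactly as in the paper. The one addition is your Step~5, which correctly proves the loss factor $\max\{4,\tfrac{2(p-1)}{p}(4/p)^{1/(p-1)}\}$ by dropping the inactive term of $\Psi^{(t)}$ in each case; the paper states this bound but does not prove it.
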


\begin{remark}[The moment-limited regime]\label{rem:rung2-regimes}
Which of the three entries of \eqref{eq:sstar-rung2} is smallest decides the
shape of the bound. If it is the first --- the \emph{moment-limited} case, in
which the crash tail is light enough that
$(\frac{\Delta_t}{4c_pM_p})^{1/(p-1)}\le\min\{\frac{a^{2}\Delta_t}{8K},
\frac a2\}$ --- then \eqref{eq:bound-rung2} reads
\[
   \gamma_t(\alpha)\ \le\
   \exp\lp-\frac{\Delta_t^{\,q}}{2\,(4c_pM_p)^{q-1}}\rp,
   \qquad q:=\frac{p}{p-1}\ \ge\ 2 ,
\]
and for $p=2$ this is $\exp(-\Delta_t^{2}/(4M_2))$: the diffusive shape of the
higher rungs, with a worse constant. If instead the crash term dominates, the
exponent is linear in $\Delta_t$ and the $\Delta_t^{q}$ expression is
\emph{not} a valid bound. The proviso is not cosmetic.
\end{remark}

\begin{remark}[Why a moment on the winning side]\label{rem:winning}
It is the lower tail of $L_n$ that hurts the wealth, and \eqref{eq:h2} already
controls it, so \eqref{eq:h2b} looks gratuitous. It is not. The anchor $c_n$ is
a conditional \emph{mean}: an \evar{} whose \epower{} is generated by rare
enormous payoffs --- a lottery ticket --- has a large $c_n$ that says nothing
about typical behaviour, and its type-II error can be arbitrarily close to one.
Condition \eqref{eq:h2b} is exactly a quantitative statement that the \epower{}
is not concentrated in the far upper tail. \Cref{prp:p1-fails} shows that with
$p=1$ the hypothesis is genuinely insufficient, and that the exponent
$q=\frac{p}{p-1}$ of \Cref{rem:rung2-regimes} is the right one.
\end{remark}

\begin{hypothesis}[Crash tail and a two-sided second moment]\label{hyp:crash2}
There are constants $a\in(0,\infty)$ and $K_n,w_{0,n}\in[0,\infty)$ such that
\eqref{eq:h2} holds and, $\Prob_1$-a.s.\ for every $n\in\N$,
\begin{equation}\label{eq:h2c}
   \E_{\Prob_1}\lB (L_n-c_n)^{2}\mid\Fc_{n-1}\rB\ \le\ w_{0,n} .
\end{equation}
\end{hypothesis}

\begin{corollary}[Rung $2''$]\label{cor:rung2pp}
Assume \Cref{hyp:crash2} and let $K\ge\sum_{n\le t}K_n$ and
$w_0\ge\sum_{n\le t}w_{0,n}$ be constants. Then, with $s_{\max}=a$,
\begin{equation}\label{eq:maj-rung2pp}
   \psi_n(s)\ :=\ \frac{w_{0,n}s^{2}}{2}
      +K_n\sum_{k\ge3}\lp\frac sa\rp^{k}
   \ =\ \frac{w_{0,n}s^{2}}{2}+\frac{K_n\,(s/a)^{3}}{1-s/a}
\end{equation}
is a majorant, with aggregate
$\Psi^{(t)}(s)=\frac{w_0s^{2}}{2}+\frac{K(s/a)^{3}}{1-s/a}$, and
\begin{equation}\label{eq:bound-rung2pp}
   \bar\gamma_t(\alpha)\ \le\ \gamma_t(\alpha)\ \le\
   \exp\lp-\sup_{0\le s<a}\lC s\Delta_t-\frac{w_0s^{2}}{2}
      -\frac{K\,(s/a)^{3}}{1-s/a}\rC\rp .
\end{equation}
If $\Delta_t>0$ and $K>0$ the optimising tilt is $s^{\ast}_t=a\,u^{\ast}$,
where $u^{\ast}\in(0,1)$ is the unique root of
\begin{equation}\label{eq:sstar-rung2pp}
   a\,w_0\,u+\frac Ka\cdot\frac{u^{2}(3-2u)}{(1-u)^{2}}
   \ =\ \Delta_t ,
\end{equation}
and the exponent obeys the two explicit bounds
\begin{align}
   \label{eq:rung2pp-small}
   \text{(small gaps, }\Delta_t\le\tfrac{aw_0}{2}\text{)}\qquad
   &\ \ge\ \frac{\Delta_t^{2}}{2w_0}
      \lp1-\frac{4K\Delta_t}{a^{3}w_0^{2}}\rp ,\\
   \label{eq:rung2pp-large}
   \text{(large gaps, }a\Delta_t>K\text{)}\qquad
   &\ \ge\ a\Delta_t-2\sqrt{K\,a\,\Delta_t}-\frac{a^{2}w_0}{2} .
\end{align}
\end{corollary}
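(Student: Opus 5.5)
The plan is to verify the one-step majorant \eqref{eq:maj-rung2pp}, read off the bound from \Cref{thm:master}(b), and then do calculus for the tilt and the two explicit lower bounds.

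\textbf{The majorant.} Write $Y:=L_n-c_n$, with all expectations conditional on $\Fc_{n-1}$. Then $\E[Y]\ge0$ by \eqref{eq:anchor}. Expand
\[
   e^{-sY}=1-sY+\tfrac{s^2Y^2}{2}+r(sY),\qquad r(v):=e^{-v}-1+v-\tfrac{v^2}{2}.
\]
Since $r(v)\le0$ for $v\ge0$, the remainder only matters for $v<0$, that is on $\{Y<0\}$. There $r(-u)=\sum_{k\ge3}u^k/k!$ with $u=s(-Y)_+$. Using $\E[-sY]\le0$ and \eqref{eq:h2c}, we get
\[
   \E[e^{-sY}]\le1+\tfrac{w_{0,n}s^2}{2}+\sum_{k\ge3}\tfrac{s^k}{k!}\E[(Y_-)^k].
\]
The layer-cake formula with \eqref{eq:h2} gives
\[
   \E[(Y_-)^k]=\int_0^\infty kx^{k-1}\Prob(Y\le-x)\,dx\le K_n\,k!/a^k,
\]
so the sum is at most $K_n\sum_{k\ge3}(s/a)^k$ for $s<a$. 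Finally apply $\log(1+z)\le z$ to obtain $\varphi_n\le\psi_n$. The closed form of the geometric tail is immediate. The remainder is nonnegative and finite on $[0,a)$, and $\psi_n(0)=0$. Summing over $n\le t$ with the constants $w_0,K$ gives the aggregate, and \Cref{thm:master}(b) yields \eqref{eq:bound-rung2pp}.

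\textbf{Optimising tilt.} Substitute $s=au$ and let $g(u)=u^3/(1-u)$. One checks
\[
   g'(u)=\frac{u^2(3-2u)}{(1-u)^2},
\]
so $\frac{d}{ds}\Psi^{(t)}=w_0au+\frac Ka g'(u)$, which is \eqref{eq:sstar-rung2pp}. This derivative is continuous, strictly increasing when $K>0$ (since $g$ is strictly convex on $(0,1)$), equal to $0$ at $u=0$, and tends to $\infty$ as $u\uparrow1$. Hence \Cref{thm:master}(c) gives a unique root in $(0,1)$ for every $\Delta_t>0$.

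\textbf{Explicit bounds.} Each is obtained by plugging in one admissible tilt.

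For \eqref{eq:rung2pp-small}, take $s=\Delta_t/w_0$, so $u=\Delta_t/(aw_0)\le\frac12$. Then $s\Delta_t-\frac{w_0s^2}{2}=\frac{\Delta_t^2}{2w_0}$. The crash term is $Ku^3/(1-u)\le2Ku^3=\frac{2K\Delta_t^3}{a^3w_0^3}$, which equals $\frac{\Delta_t^2}{2w_0}\cdot\frac{4K\Delta_t}{a^3w_0^2}$, as required.

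For \eqref{eq:rung2pp-large}, take $u=1-\varepsilon$ with $\varepsilon=\sqrt{K/(a\Delta_t)}<1$, which is admissible because $a\Delta_t>K$. The exponent is
\[
   a(1-\varepsilon)\Delta_t-\frac{w_0a^2(1-\varepsilon)^2}{2}-\frac{K(1-\varepsilon)^3}{\varepsilon}
   \ \ge\ a\Delta_t-a\varepsilon\Delta_t-\frac{w_0a^2}{2}-\frac K\varepsilon.
\]
With this choice of $\varepsilon$, both $a\varepsilon\Delta_t$ and $K/\varepsilon$ equal $\sqrt{Ka\Delta_t}$, which gives the stated bound.

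\textbf{Main obstacle.} The main obstacle is the majorant step: one must make sure the cubic-and-higher remainder is controlled only on the crash side. This uses $r\le0$ on $[0,\infty)$, which is where the winning side is absorbed by the second moment alone, together with the layer-cake moment bound $k!/a^k$. The remaining steps are routine calculus.
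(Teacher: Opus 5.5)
Your proposal is correct and follows the paper's proof essentially step for step. The paper proves the same pointwise bound $e^{z}\le1+z+\frac{z^{2}}{2}+\sum_{k\ge3}\frac{(z_+)^{k}}{k!}$, uses the same layer-cake moment bound $K_n\,k!/a^{k}$ and $\log(1+u)\le u$, and plugs in the same two tilts $s=\Delta_t/w_0$ and $s=a\bigl(1-\sqrt{K/(a\Delta_t)}\bigr)$; your remainder $r(v)$ and your sign convention $Y=L_n-c_n$ are only cosmetic reformulations of this.
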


\begin{remark}[What the second moment buys, and the simplified form]
\label{rem:rung2pp}
Compare \eqref{eq:rung2pp-small} with \Cref{rem:rung2-regimes} at $p=2$, which
gives $\Delta_t^{2}/(4M_2)$. Strengthening the one-sided moment \eqref{eq:h2b}
to the two-sided \eqref{eq:h2c} recovers the constant $\frac1{2w_0}$ --- the
same small-gap constant as rungs~3 and~4, and the best any majorant with
$\psi''(0)=w_0$ can give. The mechanism is visible in
\eqref{eq:maj-rung2pp}: the tail hypothesis is fed only into the orders
$k\ge3$, leaving $\psi_n''(0)=w_{0,n}$ untouched, whereas
\eqref{eq:maj-rung2} pays for the winning side already at order $p\le2$.
By \eqref{eq:rung2pp-large} the linear rate $a\Delta_t$ at large gaps is
preserved, and it is the best available, since \Cref{hyp:crash2} guarantees
only $s_{\max}\ge a$.

A closed form is often enough. Put $w:=\max\{w_0,\frac{2K}{a^{2}}\}$. Then, on
$[0,a)$ and with $u=s/a$,
\begin{gather*}
   \Psi^{(t)}(s)\ \le\ \frac{w\,s^{2}}{2\lp1-\frac sa\rp} ,
   \qquad\text{because}\\
   \frac{a^{2}u^{2}}{2}\lp\frac{w}{1-u}-w_0\rp
   \ \ge\ \frac{a^{2}u^{2}}{2}\cdot\frac{wu}{1-u}
   \ \ge\ \frac{a^{2}u^{2}}{2}\cdot\frac{2Ku}{a^{2}(1-u)}
   =\frac{Ku^{3}}{1-u} ,
\end{gather*}
so the right-hand side is the sub-gamma majorant of \Cref{rem:rung3prime} with
$\tau=\frac1a$, and rung~$3'$ applies verbatim:
\begin{gather*}
   \gamma_t(\alpha)\ \le\
   \exp\lp-w\,a^{2}\,h_1\lp\frac{\Delta_t}{w\,a}\rp\rp
   \ \le\ \exp\lp-\frac{\Delta_t^{2}}{2\lp w+\frac{\Delta_t}{a}\rp}\rp ,\\
   s^{\ast}_t=a\lp1-\lp1+\frac{2\Delta_t}{w\,a}\rp^{-1/2}\rp .
\end{gather*}
(The domination is applied to the aggregate, not step by step: in the
inhomogeneous case $\sum_n\max\{w_{0,n},\frac{2K_n}{a^{2}}\}$ can exceed
$\max\{w_0,\frac{2K}{a^{2}}\}$.) Being a pointwise domination of
$\Psi^{(t)}$, this is never sharper than \eqref{eq:bound-rung2pp}; what happens
when $\frac{2K}{a^{2}}>w_0$ is that the small-gap constant degrades from
$\frac1{2w_0}$ to $\frac{a^{2}}{4K}$, and when $\frac{2K}{a^{2}}\le w_0$ the two
agree to second order at the origin.
\end{remark}

\begin{remark}[Rung $2'$: an $L\log L$ moment]\label{rem:rung2prime}
Replacing \eqref{eq:h2b} by the pair
$\E_{\Prob_1}[(L_n-c_n)_+\mid\Fc_{n-1}]\le m_{1,n}$ and
$\E_{\Prob_1}[\theta((L_n-c_n)_+)\mid\Fc_{n-1}]\le M_n$, where
$\theta(y):=y\log^{+}y$, gives for every truncation level $T>1$ the majorant
\[
   \psi_n(s;T)\ =\ \frac{m_{1,n}T}{2}\,s^{2}+\frac{M_n}{\log T}\,s
      +\frac{K_ns^{2}}{a(a-s)},
   \qquad s\in[0,a) ,
\]
and hence, with $m_1,M,K$ dominating the respective sums and any
$\vartheta\in(0,1)$,
\[
   \gamma_t(\alpha)\ \le\ \exp\lp-\tfrac{1-\vartheta}{2}s_\vartheta\Delta_t\rp,
   \qquad
   s_\vartheta:=\min\lC\frac{(1-\vartheta)\Delta_t}{2m_1}
      e^{-M/(\vartheta\Delta_t)},\
      \frac{(1-\vartheta)a^{2}\Delta_t}{8K},\ \frac a2\rC ;
\]
with $\vartheta=\frac12$ and the first entry smallest this is
$\exp(-\frac{\Delta_t^{2}}{16m_1}e^{-2M/\Delta_t})$. The exponent is positive at
every positive gap but exponentially small in $1/\Delta_t$: this is the
borderline of the method, the weakest hypothesis under which a critical horizon
still disappears. Note that $T$ may be tuned to the horizon in the fixed-horizon
bound but not in the time-uniform form \eqref{eq:unif}, which fixes one
supermartingale once; at a fixed $T$ the majorant is linear at the origin and
the critical horizon returns.
\end{remark}

\subsection{Rung 3: a wealth floor and a conditional variance}
\label{sec:rung3}

From here on the majorants vanish \emph{quadratically} at the origin, which is
what produces a diffusive exponent and removes the critical horizon for good.
The three quadratic rungs differ in which second-order quantity one is willing
to name. This one names two: a floor below which a single bet cannot fall, and a
conditional second moment about the anchor. It is the regime of Bennett's,
Bernstein's and --- in the martingale form needed here --- Freedman's
inequalities \cite{Ben62,Ber46,Fre75}.

\begin{hypothesis}[Wealth floor and conditional variance]\label{hyp:freedman}
There are constants $b_n\in(0,\infty)$ and $w_n\in[0,\infty)$ such that,
$\Prob_1$-a.s.\ for every $n\in\N$,
\begin{equation}\label{eq:h3}
   E_n\ \ge\ e^{\,c_n-b_n}
   \quad\text{(equivalently } L_n\ge c_n-b_n),
   \qquad
   \E_{\Prob_1}\lB (L_n-c_n)^{2}\mid\Fc_{n-1}\rB\ \le\ w_n .
\end{equation}
\end{hypothesis}

Both requirements are \emph{anchored}: they involve the certified $c_n$, not the
unknown conditional mean, so they are checkable. If one knows instead a
conditional variance $\Var_{\Prob_1}(L_n\mid\Fc_{n-1})\le v_n$ and an upper
bound $\bar c_n\ge\E_{\Prob_1}[L_n\mid\Fc_{n-1}]$, then
$w_n=v_n+(\bar c_n-c_n)^{2}$ is admissible.

\begin{corollary}[Rung 3]\label{cor:rung3}
Assume \Cref{hyp:freedman} and let $w\ge\sum_{n\le t}w_n$ and
$b\ge\max_{n\le t}b_n$ be constants. Then, with $s_{\max}=\infty$,
\begin{equation}\label{eq:maj-rung3}
   \psi_n(s)\ :=\ \frac{w_n}{b_n^{2}}\lp e^{sb_n}-sb_n-1\rp
\end{equation}
is a majorant --- this is Bennett's bound \cite{Ben62}, in the conditional form
that makes the resulting inequality Freedman's \cite{Fre75}; see
\Cref{app:rung3}, whose aggregation step rests on \Cref{lem:elem} --- with
aggregate $\Psi^{(t)}(s)=\frac{w}{b^{2}}(e^{sb}-sb-1)$, the rate function is
\begin{equation}\label{eq:rate-rung3}
   \lp\Psi^{(t)}\rp^{\star}(x)\ =\ \frac{w}{b^{2}}\,h\lp\frac{bx}{w}\rp
   \ \ge\ \frac{x^{2}}{2\lp w+\frac{bx}{3}\rp},
   \qquad h(z):=(1+z)\log(1+z)-z ,
\end{equation}
the optimising tilt is
\begin{equation}\label{eq:sstar-rung3}
   s^{\ast}_t\ =\ \frac1b\log\lp1+\frac{b\Delta_t}{w}\rp ,
\end{equation}
and
\begin{equation}\label{eq:bound-rung3}
   \bar\gamma_t(\alpha)\ \le\ \gamma_t(\alpha)\ \le\
   \exp\lp-\frac{w}{b^{2}}\,h\lp\frac{b\Delta_t}{w}\rp\rp
   \ \le\ \exp\lp-\frac{\Delta_t^{2}}{2\lp w+\frac{b\Delta_t}{3}\rp}\rp .
\end{equation}
\end{corollary}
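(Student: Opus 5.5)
The plan is to verify the three arrows of the ladder for this rung: majorant, aggregate, transform. The only probabilistic input is the first arrow.

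\emph{Majorant.} Put $Y:=-(L_n-c_n)$. By \eqref{eq:h3}, $Y\le b_n$ $\Prob_1$-a.s. Since $c_n$ is predictable, $\E_{\Prob_1}[Y\mid\Fc_{n-1}]\le 0$ by \eqref{eq:anchor}, and $\E_{\Prob_1}[Y^2\mid\Fc_{n-1}]\le w_n$. I would use Bennett's elementary fact that $g(y):=(e^{sy}-sy-1)/y^2$ (with $g(0)=s^2/2$) is nondecreasing in $y$ for $s\ge0$. This gives
\[
e^{sY}\le 1+sY+Y^2\,\frac{e^{sb_n}-sb_n-1}{b_n^2}.
\]
Taking conditional expectations and using $\E[Y\mid\Fc_{n-1}]\le0$ gives
\[
\E[e^{sY}\mid\Fc_{n-1}]\le 1+\frac{w_n}{b_n^2}(e^{sb_n}-sb_n-1).
\]
Then $\log(1+u)\le u$ yields $\varphi_n(s)\le\psi_n(s)$ with $\psi_n(0)=0$, valid for all $s\ge0$, so $s_{\max}=\infty$. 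If $b_n$ and $w_n$ are predictable, the argument is unchanged.

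\emph{Aggregation.} This is the step I expect to need care, since $b_n$ varies. The map $b\mapsto(e^{sb}-sb-1)/b^2=\sum_{k\ge2}s^kb^{k-2}/k!$ is nondecreasing in $b>0$. Hence $b_n\le b$ gives $\psi_n(s)\le\frac{w_n}{b^2}(e^{sb}-sb-1)$. Summing and using $\sum w_n\le w$ shows that $\Psi^{(t)}(s)=\frac{w}{b^2}(e^{sb}-sb-1)$ dominates $\sum_{n\le t}\psi_n(s)$ $\Prob_1$-a.s. This is presumably the content of \Cref{lem:elem}. Together with $C_t$ this is an aggregate, so \Cref{thm:master}(b) gives $\gamma_t(\alpha)\le\exp(-(\Psi^{(t)})^\star(\Delta_t))$.

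\emph{Transform and tilt.} $\Psi^{(t)}$ is smooth and strictly convex, and $(\Psi^{(t)})'(s)=\frac wb(e^{sb}-1)$ increases from $0$ to $\infty$. By \Cref{thm:master}(c), for $\Delta_t>0$ the optimising tilt solves $\frac wb(e^{sb}-1)=\Delta_t$, which is \eqref{eq:sstar-rung3}. For $\Delta_t=0$ this same formula gives $s^\ast_t=0$. Substituting $e^{s^\ast b}=1+z$ with $z=bx/w$, the exponent becomes
\[
\frac xb\log(1+z)-\frac{w}{b^2}\bigl(z-\log(1+z)\bigr)=\frac{w}{b^2}\bigl((1+z)\log(1+z)-z\bigr),
\]
which is \eqref{eq:rate-rung3}.

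\emph{Bernstein relaxation.} Finally I would use the classical inequality $h(z)\ge\frac{z^2}{2(1+z/3)}$ for $z\ge0$. Both sides vanish to first order at $0$. Comparing second derivatives, $h''(z)=\frac1{1+z}$ and the right-hand side has second derivative $(1+z/3)^{-3}$, and $(1+z/3)^3\ge1+z$. With $z=b\Delta_t/w$ this gives the last inequality in \eqref{eq:bound-rung3}. The bound on $\bar\gamma_t$ then follows from \Cref{lem:ordering}.
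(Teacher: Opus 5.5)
Your proposal is correct and follows the paper's proof essentially step for step. It uses the same Bennett majorant from the monotonicity of $y\mapsto(e^{sy}-sy-1)/y^{2}$ on all of $\R$ (which is \Cref{lem:elem}, needed there for negative $y$ as well), the same aggregation via monotonicity in $b_n$, the same Legendre computation via $e^{s^{\ast}b}=1+bx/w$, and differs only in small refinements: the power-series check of monotonicity in $b>0$, and a second-derivative proof of Bennett's comparison $h(z)\ge z^{2}/(2(1+z/3))$ from $h''(z)=(1+z)^{-1}\ge(1+z/3)^{-3}$, which the paper merely quotes.
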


\begin{remark}[Rung $3'$: Bernstein moments instead of a floor]
\label{rem:rung3prime}
The floor may be traded for a moment growth condition: if there is a constant
$\tau>0$ with $\E_{\Prob_1}[(L_n-c_n)^{2}\mid\Fc_{n-1}]\le w_n$ and
\[
   \E_{\Prob_1}\lB\lp(c_n-L_n)_+\rp^{k}\mid\Fc_{n-1}\rB
   \ \le\ \frac{k!}{2}\,w_n\,\tau^{\,k-2}
   \qquad\text{for every integer }k\ge3 ,
\]
then $\psi_n(s)=\frac{w_ns^{2}}{2(1-\tau s)}$ is a majorant on
$s_{\max}=1/\tau$ --- the sub-gamma bound behind Bernstein's inequality
\cite{Ber46}, \cite[\S2.4]{BLM13} --- with rate function and tilt
\[
   \lp\Psi^{(t)}\rp^{\star}(x)=\frac{w}{\tau^{2}}\,h_1\lp\frac{\tau x}{w}\rp
   \ \ge\ \frac{x^{2}}{2(w+\tau x)},
   \qquad
   s^{\ast}_t=\frac1\tau\lp1-\lp1+\frac{2\tau\Delta_t}{w}\rp^{-1/2}\rp ,
\]
where $h_1(z):=1+z-\sqrt{1+2z}$, in the notation of
\cite[\S2.4]{BLM13}. \Cref{lem:floor-bernstein} shows that
\Cref{hyp:freedman} implies this with $\tau=b/3$, which is where the $b/3$ in
\eqref{eq:rate-rung3} comes from.
\end{remark}

\begin{remark}[The dichotomy in the horizon]\label{rem:dichotomy}
In the homogeneous case $c_n\equiv c$, $w_n\equiv\bar w$, $b_n\equiv b$, the
Bernstein form of \eqref{eq:bound-rung3} is
\[
   \gamma_t(\alpha)\ \le\
   \exp\lp-\frac{t\,\delta_t^{2}}{2(\bar w+\varsigma\delta_t)}\rp,
   \qquad
   \delta_t:=\lp c-\tfrac{\ell}{t}\rp_+,\quad \varsigma=b/3 ,
\]
a single expression interpolating between a diffusive regime
$\delta_t\ll\bar w/\varsigma$, where the exponent is
$t\delta_t^{2}/(2\bar w)$ and only the variance matters, and a linear regime
$\delta_t\gg\bar w/\varsigma$, where it is $t\delta_t/(2\varsigma)$ and only the
scale does. Since $\delta_t\uparrow c$, an experiment run long enough ends in
whichever regime $c$ dictates.
\end{remark}

\begin{remark}[A strictly sharper form]\label{rem:fgl}
Fan, Grama and Liu \cite{FGL12} prove a Hoeffding-type inequality for
\emph{supermartingales} that dominates \eqref{eq:bound-rung3} under identical
hypotheses. With $x:=\Delta_t/b$ and $v^{2}:=w/b^{2}$, their bound is
\[
   \gamma_t(\alpha)\ \le\ H_t(x,v):=
   \lB\lp\frac{v^{2}}{x+v^{2}}\rp^{x+v^{2}}
     \lp\frac{t}{t-x}\rp^{t-x}\rB^{\frac{t}{t+v^{2}}}
   \qquad(x\le t),
\]
with the convention $(+\infty)^{0}:=1$ at $x=t$, and
$H_t(x,v)\le\exp(-\frac{x^{2}}{2(v^{2}+x/3)})$. The extra dependence on the
horizon is precisely the Hoeffding factor that a Bennett exponent discards. That
their result applies verbatim is an instance of the point made in
\Cref{rem:notevar}: what \Cref{def:main} produces is a supermartingale
difference sequence, and the supermartingale --- not the martingale --- forms of
the classical inequalities are the ones this paper needs.
\end{remark}

\begin{example}[Clipping puts you on this rung]\label{ex:clipping}
Let $E$ be any \evar{} and $\nu\in(0,1)$. Its \emph{$\nu$-clipped}
version
\[
   E^{(\nu)}\ :=\ (1-\nu)+\nu E
\]
is again an \evar{}, since $\E_{\Prob_0}[E^{(\nu)}\mid\Fc_{n-1}]\le1$, and
it satisfies the deterministic floor $E^{(\nu)}\ge1-\nu$, hence the
floor half of \Cref{hyp:freedman} with $b=c+\log\frac1{1-\nu}$ for any
anchor $c\le\E_{\Prob_1}[\log E^{(\nu)}\mid\Fc_{n-1}]$. Clipping therefore
restores, for an arbitrary \evar{} and unconditionally, Cram\'er's condition at
every $s_0>0$ and the crash tail \eqref{eq:h2} with any exponent $a$ and
$K=e^{ab}$: it puts the construction on rungs~1 and~2 outright. It does not by
itself reach rung~3, because \Cref{hyp:freedman} also demands a conditional
second moment and clipping leaves the \emph{winning} side untouched,
$\log((1-\nu)+\nu E)\sim\log E$ as $E\to\infty$ --- exactly the
asymmetry of \Cref{rem:winning}. It reaches rung~3 as soon as
$\E_{\Prob_1}[(\log E)^{2}\mid\Fc_{n-1}]<\infty$, and rung~5 as soon as
$E$ is bounded above. \Cref{ex:twosided} shows how to buy the missing ceiling
for nothing, by clipping from both sides with the same parameter.
\end{example}

\subsection{Rung 4: a sub-Gaussian lower tail}
\label{sec:rung4}

\begin{hypothesis}[Conditionally sub-Gaussian lower tail]\label{hyp:subgauss}
There are constants $\sigma_n^{2}\in[0,\infty)$ with
\begin{equation}\label{eq:h4}
   \varphi_n(s)\ \le\ \frac{\sigma_n^{2}s^{2}}{2}
   \qquad\Prob_1\text{-a.s., for every }n\in\N\text{ and }s\ge0 .
\end{equation}
\end{hypothesis}

This is a one-sided condition: only the crash direction is constrained, which is
all a lower deviation of the wealth requires. For the two-sided notion and its
equivalent formulations in terms of Orlicz norms and tail bounds see
\cite[\S2.3]{BLM13} and \cite[\S2.1]{Wai19}. It is the hypothesis that the
mixture and stitching devices of the literature consume
\cite{Rob70,RS74,Lai76,How21,KK21}, since those are built for boundaries of the
shape $\sqrt{V_th(V_t)}$.

\begin{corollary}[Rung 4]\label{cor:rung4}
Assume \Cref{hyp:subgauss} and let $V_t\ge\sum_{n\le t}\sigma_n^{2}$ be
constants. Then $\psi_n(s)=\sigma_n^{2}s^{2}/2$ is a majorant with aggregate
$\Psi^{(t)}(s)=V_ts^{2}/2$ on $s_{\max}=\infty$, the rate function is
$(\Psi^{(t)})^{\star}(x)=\frac{x^{2}}{2V_t}$, the optimising tilt is
$s^{\ast}_t=\Delta_t/V_t$, and
\begin{equation}\label{eq:bound-rung4}
   \bar\gamma_t(\alpha)\ \le\ \gamma_t(\alpha)\ \le\
   \exp\lp-\frac{\Delta_t^{2}}{2V_t}\rp .
\end{equation}
Moreover \eqref{eq:unif} reads: for every constant $s>0$ and $\delta\in(0,1)$,
with $\Prob_1$-probability at least $1-\delta$, simultaneously for all
$t\in\N_0$,
\begin{equation}\label{eq:unif-rung4}
   \log W_t\ >\ C_t-\frac{sV_t}{2}-\frac{\log\frac1\delta}{s} ,
\end{equation}
and in the homogeneous case $c_n\equiv c>0$, $\sigma_n^{2}\equiv\sigma^{2}$,
choosing $s=c/\sigma^{2}$ in \eqref{eq:stop},
\begin{equation}\label{eq:stop-rung4}
   \Prob_1\lp\tau_\alpha\le
      \Big\lceil\tfrac{2\ell}{c}
      +\tfrac{2\sigma^{2}}{c^{2}}\log\tfrac1\delta\Big\rceil\rp\ \ge\ 1-\delta .
\end{equation}
\end{corollary}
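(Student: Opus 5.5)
This corollary is a direct substitution into \Cref{thm:master}, so I would follow the ladder pattern: verify the majorant, aggregate it, compute the one-sided Legendre transform, and then specialise parts (d)--(e).

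\textbf{Majorant and aggregate.} \eqref{eq:h4} is literally \eqref{eq:majorant} with $\psi_n(s)=\sigma_n^{2}s^{2}/2$ and $s_{\max}=\infty$. This map is $\Fc_{n-1}$-measurable because $\sigma_n^2$ is predictable (here constant), and it vanishes at $0$. Summing over $n\le t$ and using $V_t\ge\sum_{n\le t}\sigma_n^2$ with $s^2\ge0$ gives $\sum_{n\le t}\psi_n(s)\le V_ts^2/2$. Hence $\Psi^{(t)}(s)=V_ts^2/2$, together with any admissible $C_t$, is an aggregate in the sense of \Cref{def:main}.

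\textbf{Rate function and tilt.} For $x\ge0$ I maximise $sx-V_ts^2/2$ over $s\ge0$. If $V_t>0$, the concave quadratic peaks at $s=x/V_t\ge0$ with value $x^2/(2V_t)$, so the constraint $s\ge0$ is inactive. At $x=\Delta_t$ this gives $s^\ast_t=\Delta_t/V_t$, consistent with \Cref{thm:master}(c), since $(\Psi^{(t)})'(0)=0$. The degenerate case $V_t=0$ I would treat with the convention $x^2/0=+\infty$ for $x>0$ and $0$ for $x=0$: the transform is then $+\infty$ for $x>0$, matching a zero bound, and $0$ at $x=0$. Inserting this into \eqref{eq:master} yields \eqref{eq:bound-rung4}.

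\textbf{Time-uniform form.} Substituting $\Psi^{(t)}(s)=V_ts^2/2$ into \eqref{eq:unif} gives $C_t-sV_t/2-\log(1/\delta)/s$, which is \eqref{eq:unif-rung4}.

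\textbf{Rejection time.} In the homogeneous case take $C_t=tc$ and $V_t=t\sigma^2$. The defining inequality of $T_{s,\delta}$ in \eqref{eq:stop} becomes
\[
   t\lp sc-\tfrac{\sigma^2s^2}{2}\rp\ \ge\ s\ell+\log\tfrac1\delta .
\]
At $s=c/\sigma^2$ the bracket equals $c^2/(2\sigma^2)$, so the condition reads $t\ge 2\ell/c+(2\sigma^2/c^2)\log(1/\delta)$. The smallest such integer is the stated ceiling, and \eqref{eq:stop} gives \eqref{eq:stop-rung4}. Two edge cases need a word. If $\sigma^2=0$, the tilt $c/\sigma^2$ is undefined; I would instead take $s$ large in \eqref{eq:stop}, which gives the threshold $\lceil \ell/c+\log(1/\delta)/(sc)\rceil$, and this is at most the stated $\lceil 2\ell/c\rceil$ for $s$ large. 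If the ceiling is $0$, I would use $1$ instead, since $\tau_\alpha\ge1$; but this cannot happen, because $\ell>0$.

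\textbf{Main obstacle.} Nothing here is genuinely hard. The only places that need care are the bookkeeping of degenerate cases ($V_t=0$, $\sigma^2=0$) and checking that the unconstrained maximiser is nonnegative, so that the one-sided supremum coincides with the full Legendre transform.
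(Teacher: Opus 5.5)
Your proposal is correct and follows the paper's proof essentially line for line. The hypothesis is taken as the majorant, the quadratic's one-sided transform is computed with the same $V_t=0$ convention, and \eqref{eq:unif-rung4} and \eqref{eq:stop-rung4} come from substituting into \Cref{thm:master}(d)--(e) at $s=c/\sigma^{2}$. Your extra treatment of $\sigma^{2}=0$, taking $s$ large, is a valid tidying step that the paper leaves implicit.
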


\begin{hypothesis}[Bounded tilted variance]\label{hyp:tiltvar}
There are constants $s_{\max}\in(0,\infty]$ and $v_n\in[0,\infty)$ such that
$\varphi_n$ is finite on $[0,s_{\max})$ $\Prob_1$-a.s.\ and, $\Prob_1$-a.s.\
for every $n\in\N$ and every $s\in[0,s_{\max})$,
\begin{equation}\label{eq:h4p}
   \Var_{Q_{n,s}}\lp L_n\rp\ \le\ v_n ,
   \qquad
   \frac{dQ_{n,s}}{d\Prob_1(\,\cdot\mid\Fc_{n-1})}
   \ :=\ \frac{e^{-s(L_n-c_n)}}{\E_{\Prob_1}[e^{-s(L_n-c_n)}\mid\Fc_{n-1}]} ,
\end{equation}
where $\Prob_1(\,\cdot\mid\Fc_{n-1})$ is a regular conditional distribution of
$L_n$ given $\Fc_{n-1}$ and $Q_{n,s}$ its exponential tilt, so that
$\Var_{Q_{n,s}}(L_n)$ is an $\Fc_{n-1}$-measurable random variable.
\end{hypothesis}

\begin{corollary}[Rung $4'$]\label{cor:rung4p}
Assume \Cref{hyp:tiltvar} and let $V_t\ge\sum_{n\le t}v_n$ be constants. Then
$\psi_n(s)=v_ns^{2}/2$ on $[0,s_{\max})$ is a majorant, the rate function is
$\frac{x^{2}}{2V_t}$ for $x\le s_{\max}V_t$ and
$s_{\max}x-\frac{V_ts_{\max}^{2}}{2}$ beyond, the optimising tilt is
\begin{equation}\label{eq:sstar-rung4p}
   s^{\ast}_t=\min\lC\frac{\Delta_t}{V_t},\ s_{\max}\rC
   \quad\text{(approached, not attained, in the second case)} ,
\end{equation}
and in both cases
\begin{equation}\label{eq:bound-rung4p}
   \bar\gamma_t(\alpha)\ \le\ \gamma_t(\alpha)\ \le\
   \exp\lp-\frac{\Delta_t}{2}
      \min\lC\frac{\Delta_t}{V_t},\ s_{\max}\rC\rp .
\end{equation}
\end{corollary}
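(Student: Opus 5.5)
The statement to prove is \Cref{cor:rung4p}. The plan is to verify the majorant property from \Cref{hyp:tiltvar} by a second-order Taylor argument for the anchored conditional cumulant generating function. After that, \Cref{thm:master}(b) gives the bound, and the rest is one-dimensional calculus on $sx-V_ts^{2}/2$ over $[0,s_{\max})$.

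\textbf{Step 1: the majorant.} Fix $n$ and work $\omega$-wise with a regular conditional distribution of $L_n$ given $\Fc_{n-1}$. On $[0,s_{\max})$, $\varphi_n$ is finite, so it is smooth on the open interval $(0,s_{\max})$ by differentiation under the integral (dominated convergence, using finiteness at slightly larger tilts). Its derivatives are the standard tilted moments:
\[
\varphi_n'(s)=-\E_{Q_{n,s}}[L_n-c_n],\qquad \varphi_n''(s)=\Var_{Q_{n,s}}(L_n)\le v_n .
\]
We have $\varphi_n(0)=0$. At the origin,
\[
\varphi_n'(0^+)=c_n-\E_{\Prob_1}[L_n\mid\Fc_{n-1}]\le0
\]
by \eqref{eq:anchor}. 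This right derivative exists as a limit of difference quotients by convexity and monotone convergence, even if $\E[L_n\mid\Fc_{n-1}]$ could be $+\infty$; in that case it is $-\infty$, which only helps. Integrating twice gives
\[
\varphi_n(s)=\varphi_n'(0^+)s+\int_0^s(s-u)\varphi_n''(u)\,du\le \frac{v_ns^{2}}{2}.
\]
The main technical point, which I expect to be the only real obstacle, is justifying this representation at the left endpoint. Convexity handles it: $\varphi_n'$ is nondecreasing and absolutely continuous on compact subintervals of $(0,s_{\max})$. So one writes $\varphi_n(s)-\varphi_n(\epsilon)$ via the integral on $[\epsilon,s]$, bounds $\varphi_n'(\epsilon)\le\varphi_n'(0^+)+v_n\epsilon$, and lets $\epsilon\downarrow0$ using continuity of $\varphi_n$ at $0$ (again monotone/dominated convergence). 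Measurability in $\omega$ of both sides is inherited from the regular conditional distribution, so $\psi_n(s)=v_ns^{2}/2$ is a predictable majorant. Summing over $n\le t$, $\Psi^{(t)}(s)=V_ts^{2}/2$ is an aggregate by \eqref{eq:aggregate}.

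\textbf{Step 2: the rate function and tilt.} We compute
\[
(\Psi^{(t)})^{\star}(x)=\sup_{0\le s<s_{\max}}\{sx-V_ts^{2}/2\}.
\]
The unconstrained maximiser is $x/V_t$. If $x/V_t<s_{\max}$ the value is $x^{2}/(2V_t)$. Otherwise the concave function is increasing on $[0,s_{\max})$, so the supremum is the limit at $s_{\max}$, namely $s_{\max}x-V_ts_{\max}^{2}/2$, and it is approached but not attained. For $s_{\max}=\infty$ only the first case occurs. The case $V_t=0$ is read as a rate of $+\infty$ for $x>0$ and is handled by the same limit. This gives \eqref{eq:sstar-rung4p}.

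\textbf{Step 3: the bound.} \Cref{thm:master}(b) with $x=\Delta_t$ gives $\gamma_t(\alpha)\le\exp(-(\Psi^{(t)})^{\star}(\Delta_t))$. To reach the unified form \eqref{eq:bound-rung4p}, note two things:
\begin{itemize}[leftmargin=2.4em]
\item In the first regime, $\Delta_t^{2}/(2V_t)=\frac{\Delta_t}{2}\cdot\frac{\Delta_t}{V_t}$, which is exactly the stated expression.
\item In the second regime, $V_ts_{\max}\le\Delta_t$ gives
\[
s_{\max}\Delta_t-\frac{V_ts_{\max}^{2}}{2}\ge s_{\max}\Delta_t-\frac{s_{\max}\Delta_t}{2}=\frac{\Delta_t s_{\max}}{2}.
\]
\end{itemize}
In both cases the exponent is therefore at least $\frac{\Delta_t}{2}\min\{\Delta_t/V_t,s_{\max}\}$. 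The inequality $\bar\gamma_t\le\gamma_t$ is \Cref{lem:ordering}.
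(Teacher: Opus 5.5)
Your proof is correct and follows the paper's argument. The paper likewise uses Taylor's formula with integral remainder, with $\varphi_n'(0)\le0$ from \eqref{eq:anchor} and $\varphi_n''=\Var_{Q_{n,s}}(L_n)\le v_n$. It then evaluates $\sup_{0\le s<s_{\max}}\{sx-V_ts^{2}/2\}$ in the same two regimes and uses $s_{\max}x-V_ts_{\max}^{2}/2\ge s_{\max}x/2$ when $V_ts_{\max}\le x$. Your $\epsilon\downarrow0$ handling of the endpoint $s=0$ adds a bit of rigour the paper skips, since it differentiates at $0$ directly.

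One slip: for $V_t=0$ with $s_{\max}<\infty$ the transform is $s_{\max}x$, not $+\infty$. The rung-4 convention applies only when $s_{\max}=\infty$. The bound \eqref{eq:bound-rung4p} still holds in that case, because $\min\{\Delta_t/V_t,s_{\max}\}=s_{\max}$.
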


\begin{remark}[Why the tilted variance is the natural hypothesis]
\label{rem:tiltvar}
The second derivative of the anchored cumulant generating function \emph{is} the
tilted variance, $\varphi_n''(s)=\Var_{Q_{n,s}}(L_n)$, so \Cref{hyp:tiltvar} is
\Cref{hyp:subgauss} read through Taylor's formula --- in one direction only.
It is strictly stronger. Sub-Gaussianity constrains the \emph{values} of
$\varphi_n$, and the anchor may be slack, so $\varphi_n''$ need not be bounded
at all: if $L_n-c_n$ takes the values $0$ and $2$ with equal conditional
probability then $\varphi_n(s)=-s+\log\cosh s\le0$, so \Cref{hyp:subgauss} holds
with $\sigma_n^{2}=0$ while $\Var_{Q_{n,0}}(L_n)=1$; and in the first example of
\Cref{rem:notachain} one has $\varphi_n\le0$ with infinite conditional variance,
so \Cref{hyp:tiltvar} fails for every finite $v_n$.

What rung~$4'$ adds is the finite tilt range. When $\varphi_n$ is finite only up
to a horizon, a quadratic majorant is available strictly inside it and not up to
it. An exponential lower tail is the model case: if $c_n-L_n=X-\frac1a$ with $X$
conditionally $\mathrm{Exp}(a)$, then $\varphi_n''(s)=\frac1{(a-s)^{2}}$ on
$[0,a)$, unbounded at the horizon, so one takes $s_{\max}$ strictly inside, say
$s_{\max}=\frac a2$ and $v_n=\frac4{a^{2}}$, and accepts the linear regime
beyond $\Delta_t=s_{\max}V_t$. Reaching the horizon $a$ itself needs rung~$2''$
or rung~$3'$, whose majorants blow up there too. The bound
\eqref{eq:bound-rung4p} degrades gracefully across the changeover: quadratic in
$\Delta_t$ while $\Delta_t\le s_{\max}V_t$ and linear afterwards, the same
dichotomy as \Cref{rem:dichotomy} with $\frac1{s_{\max}}$ in place of
$\frac b3$.
\end{remark}

\begin{remark}[The factor $2$ in \eqref{eq:stop-rung4} is an artefact]
\label{rem:factor2}
The rejection-time bound $2\ell/c$ is twice the ideal horizon $\ell/c$ dictated
by the \epower{}, and it is tempting to blame the fixed tilt of
\Cref{rem:fixedtilt}. That is not where it comes from. The quantity to minimise
is $\Theta(s)=\frac{s\ell+\log\frac1\delta}{sc-\sigma^{2}s^{2}/2}$, and the
proof evaluates it at the convenient $s=c/\sigma^{2}$; minimising over the still
deterministic tilt gives, with $A=\ell/c$ and
$B=\frac{\sigma^{2}}{c^{2}}\log\frac1\delta$,
\begin{equation}\label{eq:thetamin}
   \min_{s>0}\Theta(s)=\frac{A+B/u^{\ast}}{1-u^{\ast}/2},
   \qquad u^{\ast}=\frac{-B+\sqrt{B^{2}+2AB}}{A},
   \qquad s^{\ast}=\frac{c\,u^{\ast}}{\sigma^{2}} ,
\end{equation}
which is $\frac{\ell}{c}+\sqrt{\frac{2\ell\sigma^{2}}{c^{3}}\log\frac1\delta}
+O(1)=\frac{\ell}{c}(1+o(1))$ as $\ell\to\infty$. A single pre-chosen tilt
already attains the ideal constant asymptotically. What mixtures and stitching
buy is therefore not the constant but \emph{adaptivity}: \eqref{eq:thetamin}
needs $c$ and $\sigma^{2}$ in advance.
\end{remark}

\subsection{Rung 5: bounded log-increments}
\label{sec:rung5}

\begin{hypothesis}[Bounded log-increments]\label{hyp:bounded}
There are constants $\underline L_n<\overline L_n$ with
\begin{equation}\label{eq:h5}
   e^{\underline L_n}\ \le\ E_n\ \le\ e^{\overline L_n}
   \qquad\text{equivalently}\qquad
   \underline L_n\ \le\ L_n\ \le\ \overline L_n
   \qquad\Prob_1\text{-a.s.}
\end{equation}
We write
\begin{gather*}
   r_n:=\overline L_n-\underline L_n\quad\text{(the \emph{range})},
   \qquad
   d_n:=c_n-\underline L_n\ \ge0\quad\text{(the \emph{downward gap})},\\
   y_n:=\frac{d_n}{r_n}\in[0,1] ,
\end{gather*}
so that $y_n$ is the position of the certified drift inside the range.
\end{hypothesis}

This is the rung most constructions can be forced onto: the floor comes from
clipping (\Cref{ex:clipping}) or a bounded stake (\Cref{ex:betting}), and the
ceiling is often automatic.

\begin{corollary}[Rung 5]\label{cor:rung5}
Assume \Cref{hyp:bounded}. Then, with $s_{\max}=\infty$, both
\begin{equation}\label{eq:maj-rung5}
   \psi_n^{\mathrm{quad}}(s):=\frac{r_n^{2}s^{2}}{8}
   \qquad\text{and}\qquad
   \psi_n^{\mathrm{exact}}(s):=s\,d_n
      +\log\lp y_ne^{-sr_n}+1-y_n\rp
\end{equation}
are majorants: the first is Hoeffding's lemma \cite{Hoe63} in its conditional
form (\Cref{thm:app-hoeffding}), the second the chord bound for $y\mapsto
e^{sy}$ on the range, which is the anchored cumulant generating function of the
two-point law on the endpoints. In particular
$\psi_n^{\mathrm{exact}}\le\psi_n^{\mathrm{quad}}$; both are computed in
\Cref{app:rung5}.
\begin{enumerate}[label=(\alph*),leftmargin=2.4em]
\item \emph{(Azuma--Hoeffding form \cite{Hoe63,Azu67}.)} With constants
      $R_t^{2}\ge\sum_{n\le t}r_n^{2}$, the rate function is $2x^{2}/R_t^{2}$, the optimising tilt is
      $s^{\ast}_t=4\Delta_t/R_t^{2}$, and
      \begin{equation}\label{eq:bound-rung5a}
         \bar\gamma_t(\alpha)\ \le\ \gamma_t(\alpha)\ \le\
         \exp\lp-\frac{2\Delta_t^{2}}{R_t^{2}}\rp .
      \end{equation}
\item \emph{(Exact binary-entropy form; $\underline L_n$ and $d_n$
      deterministic.)} Let $r\ge\max_{n\le t}r_n$ and put
      $\bar y_t:=\frac{1}{tr}\sum_{n\le t}d_n\in[0,1]$ and
      $x_t:=\bar y_t-\frac{\Delta_t}{tr}$. Unlike (a), this part needs
      $\bar y_t$ to be a number; see \Cref{rem:predictable} for predictable
      $\underline L_n,d_n$. Then $\Psi^{(t)}(s)=
      t\,[\,s\,r\bar y_t+\log(\bar y_te^{-sr}+1-\bar y_t)\,]$ is an aggregate,
      the rate function is $t\,\kl(\bar y_t-\frac{x}{tr}\,\|\,\bar y_t)$, the
      optimising tilt is
      \begin{equation}\label{eq:sstar-rung5b}
         s^{\ast}_t\ =\ \frac1r\,
         \log\frac{\bar y_t\,(1-x_t)}{x_t\,(1-\bar y_t)} ,
      \end{equation}
      and
      \begin{equation}\label{eq:bound-rung5b}
         \bar\gamma_t(\alpha)\ \le\ \gamma_t(\alpha)\ \le\
         \exp\lp-t\;\kl\lp\bar y_t-\frac{\Delta_t}{t\,r}\ \Big\|\
            \bar y_t\rp\rp ,
      \end{equation}
      where $\kl(x\|y):=x\log\frac xy+(1-x)\log\frac{1-x}{1-y}$, with the
      conventions $0\log0:=0$ and $\kl(x\|y):=+\infty$ for $x\notin[0,1]$.
\end{enumerate}
Pinsker's inequality $\kl(x\|y)\ge2(x-y)^{2}$ (\Cref{thm:app-pinsker})
recovers $\exp(-2\Delta_t^{2}/(tr^{2}))$ from \eqref{eq:bound-rung5b}.
\end{corollary}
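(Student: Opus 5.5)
Both majorants reduce to a one-step statement about a bounded random variable whose conditional mean is at least the anchor; everything else comes from \Cref{thm:master}. Fix $n$ and write $Y:=L_n-\underline L_n\in[0,r_n]$. By \eqref{eq:anchor}, $\E_{\Prob_1}[Y\mid\Fc_{n-1}]\ge d_n=y_nr_n$, and
\[
   \varphi_n(s)=s\,d_n+\log\E_{\Prob_1}\lB e^{-sY}\mid\Fc_{n-1}\rB .
\]
Convexity of $y\mapsto e^{-sy}$ on $[0,r_n]$ gives the chord bound $e^{-sY}\le 1-\frac{Y}{r_n}+\frac{Y}{r_n}e^{-sr_n}$. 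Taking conditional expectations, and noting that the chord has nonpositive slope in $Y$ for $s\ge0$, we may replace $\E[Y\mid\Fc_{n-1}]/r_n$ by the smaller $y_n$. This yields $\varphi_n\le\psi^{\mathrm{exact}}_n$.

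For $\psi^{\mathrm{exact}}_n\le\psi^{\mathrm{quad}}_n$, note that $\psi^{\mathrm{exact}}_n$ is the cumulant generating function at $-s$ of the centred two-point law on $\{-d_n,\,r_n-d_n\}$, which has mean zero, so $\psi^{\mathrm{exact}}_n(s)=\log\E[e^{-s(B-y_nr_n)}]$. Hoeffding's lemma (\Cref{thm:app-hoeffding}) bounds this by $r_n^2s^2/8$. Alternatively, one checks directly that $g(s)=\psi^{\mathrm{exact}}_n(s)$ has $g(0)=g'(0)=0$ and $g''\le r_n^2/4$, since $g''$ is a Bernoulli variance times $r_n^2$. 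Nonnegativity of $\psi^{\mathrm{exact}}_n$ follows from Jensen. Predictability is clear, because $d_n$ and $y_n$ are $\Fc_{n-1}$-measurable.

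For part (a), aggregate to $R_t^2s^2/8$ and apply \Cref{thm:master}(b). The Legendre transform is $\sup_s\{sx-R_t^2s^2/8\}=2x^2/R_t^2$, attained at $s=4x/R_t^2$.

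For part (b) the hard step is the aggregation, since $\sum_n\psi^{\mathrm{exact}}_n$ is not of the displayed form. The per-step majorant also involves $c_n$ through $d_n$, so I would verify the combined inequality \eqref{eq:combined} directly rather than \eqref{eq:aggregate}. Define $F(y,\rho):=s\rho y+\log(ye^{-s\rho}+1-y)$. I would use two facts:
\begin{itemize}
   \item $F$ is increasing in $\rho$ for fixed $y$. Replacing $r_n$ by $r$ with $d_n$ fixed, i.e.\ $y_n\mapsto d_n/r$, is the same as the chord bound on a larger interval $[\underline L_n,\underline L_n+r]$, which is still valid, since $Y\le r_n\le r$.
   \item $y\mapsto F(y,r)$ is concave on $[0,1]$, because $\log$ of an affine function is concave and $s ry$ is linear.
\end{itemize}
Jensen over $n$ then gives $\sum_n\psi^{\mathrm{exact}}_n(s)\le t\,F(\bar y_t,r)=\Psi^{(t)}(s)$, where $\bar y_t$ is deterministic by assumption.

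The drift half is more delicate. The linear term $s\sum_n d_n$ appears in $\Psi^{(t)}$ and cancels against $s\sum_nc_n$ up to $s\sum_n\underline L_n$. I therefore take $C_t:=\sum_n c_n$ with $\underline L_n$ deterministic, and check that the resulting exponent $s\,C_t-\Psi^{(t)}(s)$ matches. After substituting $\ell$, the Legendre transform is the standard computation
\[
   \sup_{s\ge0}\bigl\{s\Delta_t-tF(\bar y_t,r)\bigr\}=t\,\kl\bigl(\bar y_t-\tfrac{\Delta_t}{tr}\,\big\|\,\bar y_t\bigr).
\]
This is the Bernoulli Cramér transform with mean $\bar y_t$ evaluated at $x_t$. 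Setting the derivative to zero gives \eqref{eq:sstar-rung5b}, and $s^\ast\ge0$ iff $x_t\le\bar y_t$. The case $x_t<0$ gives $+\infty$, consistent with the convention for $\kl$.

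Finally, Pinsker's inequality (\Cref{thm:app-pinsker}) gives $t\,\kl\ge 2t(\Delta_t/(tr))^2$, which is the stated recovery of $\exp(-2\Delta_t^2/(tr^2))$.
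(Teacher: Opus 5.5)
Your proposal is correct and follows the paper's proof essentially step for step. The ingredients match: the chord bound on the range evaluated at the anchored mean (your $L_n-\underline L_n$ with $e^{-sy}$ is an affine relabelling of the paper's $c_n-L_n$ with $e^{sy}$), Hoeffding's lemma on the two-point law for $\psi_n^{\mathrm{exact}}\le\psi_n^{\mathrm{quad}}$, the chord on the enlarged interval $[\underline L_n,\underline L_n+r]$ followed by Jensen over $n$ for the aggregate in (b), and the Bernoulli Legendre transform.

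Two cosmetic points. First, the monotonicity your first bullet actually uses is in $\rho$ at fixed $d_n=\rho y$, not at fixed $y$; your chord-on-a-larger-interval justification is the right one. Second, since $\underline L_n$ and $d_n$ are deterministic in (b), \eqref{eq:aggregate} holds directly with any deterministic $C_t\le\sum_{n\le t}c_n$, so the drift half needs no special treatment.
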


\begin{remark}[The two forms are not ordered]\label{rem:rung5-order}
Part (a) keeps the exact sum of squared ranges $R_t^{2}\le tr^{2}$ and is
sharper when the ranges are inhomogeneous; part (b) keeps the binary relative
entropy, which is far larger than its quadratic minorant whenever $\bar y_t$ is
near $0$ or $1$ --- the heavily clipped regime, where the floor sits far below
the anchor. Taking the maximum of the two exponents costs nothing.
\Cref{prp:bdd-sharp} shows that the exponent of part~(b) is exactly right: no
bound depending only on $(\underline L,\overline L,c)$ can have a larger one.
\end{remark}

\begin{remark}[Aggregating a predictable rung 5(b)]\label{rem:predictable}
By \Cref{rem:whatsrandom} the per-step quantities of every rung may be
predictable; what has to be checked in each case is that a deterministic
aggregate exists. For rungs~2--4 and~5(a) the check is termwise and needs only
a bound in the obvious direction --- below for $c_n$, above for
$w_n,\sigma_n^{2},r_n,b_n,K_n,M_{p,n}$ --- because there $\psi_n$ does not
involve $c_n$ and is nondecreasing in each of its own constants. The two
exceptions are the rungs whose majorant does involve $c_n$. Rung~1 is settled
by \Cref{rem:rung1nodrift}: the drift cancels, and \eqref{eq:bound-rung1c}
holds for arbitrary predictable $\rho_n$. Rung~5(b) is harder, because
$d\mapsto\psi_n^{\mathrm{exact}}$ is \emph{not} monotone: it vanishes at $d=0$
and at $d=r$ and peaks strictly inside, so no bound on $d_n$ alone controls
$\psi_n^{\mathrm{exact}}$.

What is monotone is the combination \eqref{eq:combined}. Writing
$c_n=\underline L_n+d_n$,
\[
   s\sum_{n\le t}c_n-\sum_{n\le t}\psi_n^{\mathrm{exact}}(s)
   \ =\ s\sum_{n\le t}\underline L_n
      -\sum_{n\le t}\log\lp1-\frac{d_n}{r}\lp1-e^{-sr}\rp\rp ,
\]
and $d\mapsto-\log(1-\frac dr(1-e^{-sr}))$ is nondecreasing, so the right-hand
side is nondecreasing in $\underline L_n$ and in $d_n$ separately. Predictable
$\underline L_n,d_n$ are therefore admissible provided one has deterministic
a.s.\ \emph{lower} bounds for both and a deterministic $r\ge\max_nr_n$: one
then applies \Cref{cor:rung5}(b) with those. An upper bound on $d_n$ is of no
use and must not be substituted --- the exponent would be over-estimated and
the resulting bound invalid. This is exactly the generality a predictable
clipping schedule $(\nu_n)_{n\in\N}$ requires.
\end{remark}

\begin{example}[Two-sided clipping reaches this rung, for any \evar{}]
\label{ex:twosided}
\Cref{ex:clipping} clips from below and leaves the winning side untouched, so
it does not by itself reach \Cref{hyp:bounded}. Clipping from both sides with
the \emph{same} parameter does. For $\nu\in(0,1)$ put
\begin{equation}\label{eq:twosided}
   E^{[\nu]}\ :=\ \nu+(1-\nu)\min\lC E,\ \tfrac1\nu\rC .
\end{equation}
Then $E^{[\nu]}$ is again an \evar{}, since
$\min\{E,\frac1\nu\}\le E$ gives
$\E_{\Prob_0}[E^{[\nu]}\mid\Fc_{n-1}]\le\nu+(1-\nu)=1$, and it satisfies
\Cref{hyp:bounded} with
\[
   \underline L=\log\nu ,
   \qquad
   \overline L=\log\frac{1-\nu+\nu^{2}}{\nu} ,
   \qquad
   r(\nu)=\log\frac{1-\nu+\nu^{2}}{\nu^{2}} ,
\]
\emph{whatever $E$ is}: no boundedness, no moment, no tail hypothesis is
required. This is the sense in which rung~5 is universally reachable, and it is
the cleanest answer to the question of what one should bet with when nothing is
known about $E$.

Nothing is lost in the limit. For fixed $e\in[0,\infty]$ the map
$\nu\mapsto\nu+(1-\nu)\min\{e,\frac1\nu\}$ is nonincreasing on $(0,1)$ when
$e>1$ and nondecreasing when $e<1$, and tends to $e$ as $\nu\downarrow0$; so
monotone convergence, applied separately on $\{E>1\}$, where the integrands
increase and are bounded below by $\log E^{[\nu_0]}\ge0$, and on $\{E\le1\}$,
where they decrease and are bounded above by $0$, gives
\[
   \E_{\Prob_1}\lB\log E^{[\nu]}\mid\Fc_{n-1}\rB
   \ \xrightarrow[\ \nu\downarrow0\ ]{}\
   \E_{\Prob_1}\lB\log E\mid\Fc_{n-1}\rB\ \in[-\infty,\infty] .
\]
What is lost is the range. As $\nu\downarrow0$,
\[
   r(\nu)=2\log\tfrac1\nu+O(\nu)\ \longrightarrow\ \infty ,
   \qquad
   y^{[\nu]}=\frac{c^{[\nu]}-\log\nu}{r(\nu)}
   =\frac12+O\lp\tfrac{1}{\log(1/\nu)}\rp ,
\]
writing $c^{[\nu]}$ for the \epower{} of $E^{[\nu]}$. So the certified drift
sits asymptotically at the midpoint of the range, where \Cref{cor:rung5}(b)
gains nothing over \Cref{cor:rung5}(a), and the Azuma--Hoeffding exponent per
observation behaves like
\[
   \frac{2\,(c^{[\nu]})^{2}}{r(\nu)^{2}}
   \ \sim\ \frac{c^{2}}{2\log^{2}\frac1\nu}\ \longrightarrow\ 0 .
\]
The optimal $\nu$ is therefore interior and strictly positive: aggressive
clipping ($\nu\downarrow0$) restores the raw \evar{} but destroys the rate,
and timid clipping ($\nu\uparrow1$) yields a tiny range but no \epower{},
since $c^{[\nu]}\to0$. A predictable schedule $(\nu_n)_{n\in\N}$ tuned on the
past is admissible, by \Cref{rem:predictable}.
\end{example}

\begin{example}[Betting \evars]\label{ex:betting}
Let $g$ be measurable with values in $[0,1]$, let
$\E_{\Prob_0}[g(X_n)\mid\Fc_{n-1}]=m\in(0,1)$ be known, and let the
\emph{stake} $\nu_n\in(0,\frac1m)$ be predictable. Then
$E_n:=1+\nu_n(g(X_n)-m)$ is a betting sequence in the sense of
\Cref{def:setting}, strictly positive, with
\[
   0<1-\nu_nm\ \le\ E_n\ \le\ 1+\nu_n(1-m) ,
\]
so \Cref{hyp:bounded} holds with $\underline L_n=\log(1-\nu_nm)$ and
$\overline L_n=\log(1+\nu_n(1-m))$, and \Cref{hyp:freedman} holds with the
floor $b_n=c_n-\underline L_n$ and any predictable
$w_n\ge\E_{\Prob_1}[(L_n-c_n)^{2}\mid\Fc_{n-1}]$, for which the crude choice
$w_n=r_n^{2}$ always works. This is the betting construction of Waudby-Smith and
Ramdas \cite{WSR24} and covers Shafer's testing-by-betting formulation
\cite{Sha21}; the stake governs the variance--floor trade-off of
\Cref{rem:dichotomy}, and \Cref{rem:predictable} is exactly the generality a
predictable stake schedule requires.
\end{example}

\subsection{Rung 6: i.i.d.\ increments and the exact rate}
\label{sec:rung6}

The top rung replaces every majorant by the truth. It is the terminus in two
senses: the exponent it delivers is exact, so no further hypothesis can improve
it, and it is the rung at which the companion paper can say what that exponent
is bounded by, for \emph{every} \evar{}. The price is the only genuinely
restrictive hypothesis in the paper.

\begin{hypothesis}[I.i.d.\ increments]\label{hyp:iid}
The situation is that of \Cref{ex:iidsetting}: $(E_n)_{n\in\N}$ is i.i.d.\
under $\Prob_1$, generated by a single \evar{} $E$ with
$\E_{P_1}|\log E|<\infty$ and $c:=\E_{P_1}[\log E]>0$.
\end{hypothesis}

\begin{corollary}[Rung 6]\label{cor:rung6}
Assume \Cref{hyp:iid} and write
\[
   \Lambda_s(E):=-\log\E_{P_1}\lB E^{-s}\rB,
   \qquad
   \Lambda(E):=\sup_{s\ge0}\Lambda_s(E)\ \in[0,\infty] ,
\]
the \emph{tilted exponent} at tilt $s$ and the \emph{Chernoff--Stein exponent}
of $E$ --- the exponent it achieves at fixed level as $t\to\infty$, whose
supremum over all \evars{} is the classical Chernoff--Stein exponent
\cite[Thm.~4.25]{CEIL}; at $E=R$ it is the Chernoff information of the pair
\cite[Cor.~4.18]{CEIL}.
Then $\psi(s):=\varphi(s)=sc-\Lambda_s(E)$ is a majorant --- with equality in
\eqref{eq:majorant}, so no majorant is smaller --- the rate function is
$\psi^{\star}(x)=\sup_{s\ge0}\{\Lambda_s(E)-s(c-x)\}$, the optimising tilt
solves
\begin{equation}\label{eq:sstar-rung6}
   \E_{Q_s}[\log E]=\frac{\ell}{t},
   \qquad \frac{dQ_s}{dP_1}:=\frac{E^{-s}}{\E_{P_1}[E^{-s}]} ,
\end{equation}
and
\begin{equation}\label{eq:bound-rung6}
   \gamma_t(\alpha)\ \le\
   \exp\lp-\sup_{s\ge0}\lC t\,\Lambda_s(E)-s\ell\rC\rp .
\end{equation}
If moreover $0$ is interior to $\lC x\in\R\st I(x)<\infty\rC$, $I$ being the Cram\'er rate
function of $\log E$ under $P_1$, then \eqref{eq:bound-rung6} is exact on the
exponential scale, by Cram\'er's theorem (\Cref{thm:app-cramer}):
\begin{equation}\label{eq:cramerlimit}
   \lim_{t\to\infty}\ \frac1t\log\gamma_t(\alpha)\ =\ -\Lambda(E)
   \qquad\text{for every fixed }\alpha\in(0,1) .
\end{equation}
For the sequential error \eqref{eq:ordering} and \eqref{eq:cramerlimit} give
one half,
\begin{equation}\label{eq:cramerlimit-seq}
   \liminf_{t\to\infty}\ -\frac1t\log\bar\gamma_t(\alpha)\ \ge\ \Lambda(E) .
\end{equation}
\end{corollary}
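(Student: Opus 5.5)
The plan is to obtain everything except the limits from \Cref{cor:iid} with the true anchored cumulant generating function as majorant, and then to take the limits from Cram\'er's theorem.

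\textbf{Step 1: the majorant and the bound.} Under \Cref{hyp:iid} with $c_n\equiv c$ the anchor holds with equality. Independence makes the conditional cumulant generating function deterministic:
\[
   \varphi(s)=\log\E_{P_1}[e^{-s(\log E-c)}]=sc+\log\E_{P_1}[E^{-s}]=sc-\Lambda_s(E).
\]
This function is $[0,\infty]$-valued, since it is nonnegative by Jensen. So $\psi:=\varphi$ is a majorant with equality in \eqref{eq:majorant}, and it is therefore minimal.

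\Cref{cor:iid} with $C_t=tc$ and $\Psi^{(t)}=t\psi$ gives the exponent
\[
   \sup_{s\ge0}\{s\Delta_t-t\psi(s)\}.
\]
Suppose first that $\Delta_t=tc-\ell>0$. Then
\[
   s\Delta_t-t\psi(s)=s(tc-\ell)-tsc+t\Lambda_s(E)=t\Lambda_s(E)-s\ell,
\]
which is \eqref{eq:bound-rung6}. If instead $tc\le\ell$, then $\Delta_t=0$ and the master bound is trivial. In that case I still need $\sup_s\{t\Lambda_s-s\ell\}\ge0$, which holds at $s=0$. I would also check that $t\Lambda_s-s\ell\le t(\Lambda_s-sc)\le0$, so that the stated bound is simply the trivial one there and remains valid. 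Alternatively, \eqref{eq:bound-rung6} follows directly from \eqref{eq:negmom} together with Markov's inequality applied to $W_t^{-s}$ at threshold $\alpha^{s}$, with no positive part needed. I would use this direct route because it is cleaner.

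The rate function formula is the definition of $\psi^\star$ after substitution. For the tilt, I would differentiate $t\Lambda_s-s\ell$ in $s$ on the interior of finiteness:
\[
   \frac{d}{ds}\Lambda_s=\E_{Q_s}[\log E],
\]
and setting the derivative of $t\Lambda_s-s\ell$ to zero gives \eqref{eq:sstar-rung6}.

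\textbf{Step 2: the fixed-horizon limit.} The upper half comes from Step 1: for each fixed $s$,
\[
   \tfrac1t\log\gamma_t\le-\Lambda_s+s\ell/t\to-\Lambda_s,
\]
so $\limsup_t\tfrac1t\log\gamma_t\le-\Lambda_s$ for every $s$, and taking the supremum over $s$ gives the bound $-\Lambda(E)$.

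The lower half uses the lower bound of Cram\'er's theorem (\Cref{thm:app-cramer}) for $S_t=\sum L_n$, applied to the open set $(-\infty,\ell/t\cdot)$. Since the threshold $\ell$ is fixed, for any $\epsilon>0$ and all large $t$ we have
\[
   \gamma_t=\Prob_1(S_t/t\le\ell/t)\ge\Prob_1(S_t/t<\epsilon).
\]
Cram\'er's lower bound then gives
\[
   \liminf_t\tfrac1t\log\gamma_t\ge-\inf_{x<\epsilon}I(x)\ge-I(0)-o(1).
\]

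\textbf{Step 3: identifying $I(0)$ with $\Lambda(E)$ — the main obstacle.} I need $\inf_{x<\epsilon}I(x)\to I(0)$ and $I(0)=\Lambda(E)$. The first requires continuity of $I$ at $0$. This is exactly where the hypothesis that $0$ is interior to $\{I<\infty\}$ enters: a convex function is continuous on the interior of its domain. For the second, write
\[
   I(0)=\sup_{\lambda\in\R}\{-\log\E[e^{\lambda\log E}]\}.
\]
Since $c>0$ and $0<c$, the supremum may be restricted to $\lambda\le0$ by the standard one-sided argument. Indeed, for $\lambda>0$ Jensen gives $\log\E[E^{\lambda}]\ge\lambda c>0$, so those terms are negative and do not contribute (\Cref{lem:positive}). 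Setting $\lambda=-s$ then yields $I(0)=\Lambda(E)$.

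\textbf{Step 4: the sequential error.} Finally, \eqref{eq:cramerlimit-seq} follows from $\bar\gamma_t\le\gamma_t$ in \eqref{eq:ordering} combined with the upper half established in Step 2.
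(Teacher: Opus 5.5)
Your route is the paper's: \Cref{cor:iid} with the exact anchored cumulant generating function, the identity $s\Delta_t-t\psi(s)=t\Lambda_s(E)-s\ell$, $\frac{d}{ds}\Lambda_s(E)=\E_{Q_s}[\log E]$, and $I(0)=\Lambda(E)$ after discarding positive tilts by Jensen. Steps~1, 3 and~4 are fine, but the lower half of Step~2 has a reversed inequality. For $t>\ell/\epsilon$ you have $\ell/t<\epsilon$, so $\{S_t/t\le\ell/t\}\subseteq\{S_t/t<\epsilon\}$. Hence $\gamma_t(\alpha)\le\Prob_1(S_t/t<\epsilon)$, not $\ge$. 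Feeding an \emph{upper} bound for $\gamma_t(\alpha)$ into the lower bound of Cram\'er's theorem proves nothing about $\liminf_t\frac1t\log\gamma_t(\alpha)$. A telltale sign is that your version never uses the interiority hypothesis: since $0\in(-\infty,\epsilon)$, the inequality $\inf_{x<\epsilon}I(x)\le I(0)$ holds for free.

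The repair is to compare with a level \emph{below} zero, which $\ell>0$ allows at every horizon:
$\gamma_t(\alpha)=\Prob_1(S_t\le\ell)\ge\Prob_1\lp S_t/t\in(-\infty,0)\rp$.
The open-set lower bound of Cram\'er's theorem on $\R$ then gives $\liminf_t\frac1t\log\gamma_t(\alpha)\ge-\inf_{x<0}I(x)$. Use the moment-free form \cite[Thm.~2.2.3]{DZ10} here. The version quoted as \Cref{thm:app-cramer} assumes a two-sided moment generating function near $0$, which neither \Cref{hyp:iid} nor the interiority assumption supplies. Only now does the hypothesis enter: $I$ is convex and $0$ is interior to its domain, so $I$ is continuous at $0$ and $\inf_{x<0}I(x)=I(0)=\Lambda(E)$. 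With that one line changed your argument is complete. It is then more explicit than the paper's, which simply invokes Cram\'er's theorem ``at the level $\ell/t\to0$''. Your Step~4 is also slightly sharper than the paper's wording: it uses only the Chernoff half of Step~2, so \eqref{eq:cramerlimit-seq} holds under \Cref{hyp:iid} alone, without the interiority assumption.
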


\begin{remark}[The sequential error has the same exponent]\label{rem:seqexp}
Equality is expected in \eqref{eq:cramerlimit-seq} but is not proved here, and
it does not follow from \eqref{eq:ordering}, which supplies no reverse
inequality. The argument is a change of measure. Suppose the supremum defining
$\Lambda(E)$ is attained at an $s^{\ast}$ interior to
$\lC s\ge0\st\E_{P_1}[E^{-s}]<\infty\rC$ and let $Q:=Q_{s^{\ast}}$ be the tilt
of \eqref{eq:sstar-rung6}, under which the increments have mean zero and finite
variance. Writing $S_t=\log W_t$ and using
$\frac{d\Prob_1}{dQ}\big|_{\Fc_t}=e^{s^{\ast}S_t-t\Lambda(E)}$,
\begin{align*}
   \bar\gamma_t(\alpha)
   &\ \ge\ \E_Q\lB\mathbf 1\lC\max_{n\le t}S_n\le\ell,\ S_t\ge-K_t\rC
      e^{s^{\ast}S_t-t\Lambda(E)}\rB\\
   &\ \ge\ e^{-s^{\ast}K_t-t\Lambda(E)}\,
      Q\lp\max_{n\le t}S_n\le\ell,\ S_t\ge-K_t\rp ,
\end{align*}
so it suffices that the $Q$-probability decay sub-exponentially with
$K_t=o(t)$. It does: under $Q$ the walk is centred with finite variance, so
staying below a fixed level up to time $t$ has probability of order
$t^{-1/2}$ by fluctuation theory. Making this precise is a digression from the
subject of this paper, and we do not pursue it; nothing below uses it.
\end{remark}

\begin{lemma}[When an exponent exists at all]\label{lem:positive}
Let $E\ge0$ with $c:=\E_{P_1}[\log E]$ well defined in $[-\infty,\infty)$, and
let $\Lambda_s(E)$ and $\Lambda(E)$ be as in \Cref{cor:rung6}. The following
are equivalent:
\begin{enumerate}[label=(\roman*),leftmargin=2.4em]
\item\label{it:p1} $\Lambda(E)>0$, that is, $\E_{P_1}[E^{-s}]<1$ for some
      $s>0$;
\item\label{it:p2} $c>0$ \emph{and} \Cref{hyp:cramer} holds, that is,
      $\E_{P_1}[E^{-s_0}]<\infty$ for some $s_0>0$.
\end{enumerate}
Moreover, when $c>0$ nothing is lost by restricting the tilt to $s\ge0$:
$\sup_{s\in\R}\{-\log\E_{P_1}[E^{-s}]\}=\Lambda(E)$.
\end{lemma}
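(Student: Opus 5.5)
The plan is to study $f(s):=\log\E_{P_1}[E^{-s}]$ on $\R$. It is a convex function with values in $(-\infty,+\infty]$ and $f(0)=0$, since $E^{0}:=1$. Convexity is Hölder's inequality; the only care needed is at points where $E=0$ or $E=\infty$ with the conventions $0^{-s}=\infty$ and $\infty^{-s}=0$. Since $\Lambda_s(E)=-f(s)$, statement (i) says exactly that $f(s)<0$ for some $s>0$. Everything rests on one key fact, the right derivative at the origin, which I expect to be $f'(0^+)=-c$ whenever $f$ is finite on some $[0,s_0]$.

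For (i)$\Rightarrow$(ii), suppose $f(s_1)<0$ for some $s_1>0$. Then $\E[E^{-s_1}]<1<\infty$, which is Hypothesis~\ref{hyp:cramer} with $s_0=s_1$. In particular $E>0$ a.s.
\begin{itemize}
\item If $c$ were $-\infty$ or at most $0$, Jensen would give $f(s_1)=\log\E[e^{-s_1\log E}]\ge -s_1\E[\log E]=-s_1c\ge0$, a contradiction. When $c=-\infty$ the right-hand side is $+\infty$, and Jensen is still valid for the convex exponential, with $\E[\log E]$ well defined by assumption.
\item Hence $c>0$.
\end{itemize}

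For (ii)$\Rightarrow$(i), first note that $\E[E^{-s_0}]<\infty$ forces $E>0$ a.s., so $f$ is finite on $[0,s_0]$ by convexity (log-convexity interpolates between $0$ and $s_0$). The main obstacle is showing $f(s)<0$ for small $s>0$, that is, $\frac{1}{s}(\E[E^{-s}]-1)\to -c<0$. I would write
\[
\frac{E^{-s}-1}{s}=\frac{e^{-s\log E}-1}{s}
\]
and split over the events $\{E\le1\}$ and $\{E>1\}$.
\begin{itemize}
\item On $\{E>1\}$ the integrand is negative. Its absolute value increases to $\log E$ as $s\downarrow0$, because $(1-e^{-sx})/s$ is decreasing in $s$ for $x>0$. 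Monotone convergence gives the limit $-\E[(\log E)_+]$, possibly $-\infty$.
\item On $\{E\le1\}$ the integrand is nonnegative and decreases to $(\log E)_-$, because $(e^{sy}-1)/s$ is increasing in $s$ for $y\ge0$. It is dominated at $s=s_0$ by $(E^{-s_0}-1)/s_0$, which is integrable, so dominated convergence gives $\E[(\log E)_-]<\infty$.
\end{itemize}
Together the difference quotient tends to $-c\in[-\infty,0)$. Hence $\E[E^{-s}]<1$ for small $s>0$, so $\Lambda(E)>0$. This also covers $c=+\infty$; the hypothesis $c\in[-\infty,\infty)$ excludes it anyway.

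For the final claim, assume $c>0$. It suffices to show $-f(s)\le0$ for $s<0$, because the supremum over $s\ge0$ is already at least $-f(0)=0$. For $s<0$, Jensen gives $f(s)\ge -s\,c>0$, using $E^{-s}=e^{|s|\log E}$. Jensen is valid as long as $\E[\log E]$ is defined, and if $\log E=-\infty$ with positive probability then $c=-\infty$, which is excluded. So $-f(s)<0\le\Lambda(E)$ for every $s<0$, and therefore $\sup_{s\in\R}\{-f(s)\}=\Lambda(E)$.
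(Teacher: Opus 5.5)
Your proposal is correct and follows the paper's proof essentially step for step. You use Jensen for (i)$\Rightarrow$(ii) and for the negative tilts, and for (ii)$\Rightarrow$(i) you get finiteness on $[0,s_0]$ from H\"older and compute the right derivative $-c<0$ at the origin. The only cosmetic difference is that you split the difference quotient over $\{E\le1\}$ and $\{E>1\}$ and combine dominated with monotone convergence, whereas the paper notes that the quotients are monotone in $s$ everywhere and dominated above by their value at $s_0$, so a single monotone-convergence step suffices.
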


\begin{remark}[Reading \Cref{lem:positive}]\label{rem:positive}
This is the reason the ladder begins where it does. Positive \epower{} alone
gives no exponent, and a negative moment alone gives no exponent; it is exactly
their conjunction that does. Rungs~2--5 all imply \Cref{hyp:cramer} --- rungs~2,
$2''$, 3, 4 and~5 by \Cref{lem:implications}, and rungs~$2'$, $3'$ and~$4'$
because their majorants are finite on $[0,s_{\max})$ with $s_{\max}>0$ --- so on
that part of the ladder the only remaining question is whether $c>0$, which is
what $\Delta_t>0$ encodes, and how large the exponent is, which is what the
majorant decides.

Rung~6 is the exception, and this is exactly why the lemma is stated here.
\Cref{hyp:iid} asks only for $\E_{P_1}|\log E|<\infty$ and $c>0$, which permits
a lower tail with no exponential moment at all: for
$\Prob_1(\log E=-k)\propto k^{-3}$ together with one large positive atom, the
\epower{} is positive while $\E_{P_1}[E^{-s}]=\infty$ for every $s>0$, so
$\Lambda(E)=0$ and \eqref{eq:bound-rung6} is the trivial bound $1$ at every
horizon --- although $\Delta_t>0$. At the top of the ladder, positive \epower{}
is not enough.

The lemma also explains why the tilt is searched over $s\ge0$: by Jensen,
$\E_{P_1}[E^{-s}]\ge e^{-sc}>1$ for every $s<0$ when $c>0$, so negative tilts
could not help even if they were admissible --- and in \Cref{thm:master} they
are not, the Markov step requiring $x\mapsto x^{-s}$ to be nonincreasing.
\end{remark}

\begin{remark}[Where the ladder stops losing, and what stops it]
\label{rem:ceiling}
Rungs~1--5 are computable lower bounds for the Chernoff--Stein exponent $\Lambda(E)$
in increasing order of sharpness and of what they assume; rung~6 is
$\Lambda(E)$ itself. The natural
next question --- how large $\Lambda(E)$ can be, over all \evars{} --- is
answered in the companion paper: $\Lambda(E)<\KL(P_0\|P_1)$ for every \evar{},
the bound is tight, and it is not attained \cite[Thm.~4.25]{CEIL}. Note the
direction of the divergence. The \epower{} $c$ is capped by $\KL(P_1\|P_0)$ and
the exponent by $\KL(P_0\|P_1)$; the two design criteria genuinely pull apart,
and \cite[Thm.~4.29]{CEIL} makes the separation exact.
\end{remark}

\subsection{The ladder at a glance}
\label{sec:table}

\Cref{tab:ladder} collects the majorants and their transforms. It is stated
per observation --- one step, one hypothesis, drift $c$ --- so that the entries
are the functions $\psi$ and $\psi^{\star}$ themselves; the horizon-$t$ bound is
$\exp(-t\,\psi^{\star}(\Delta_t/t))$ by \Cref{cor:iid} in the homogeneous case,
and $\exp(-(\Psi^{(t)})^{\star}(\Delta_t))$ in general.

\begin{table}[htbp]
\centering
\footnotesize
\setlength{\tabcolsep}{3pt}
\renewcommand{\arraystretch}{1.35}
\begin{tabular}{@{}>{\centering\arraybackslash}p{0.75cm}
   >{\raggedright\arraybackslash}p{3.5cm}
   >{\raggedright\arraybackslash}p{4.2cm}
   >{\raggedright\arraybackslash}p{3.2cm}
   >{\raggedright\arraybackslash}p{3.2cm}@{}}
\toprule
& Hypothesis & Majorant $\psi(s)$ & Rate $\psi^{\star}(x)$
& Tilt $s^{\ast}$\\
\midrule
1 & one negative moment
  & $\frac{s}{s_0}\kappa$ on $[0,s_0]$
  & $(s_0x-\kappa)_+$
  & $s_0$\\
2 & crash tail, one-sided $p$-th moment
  & $c_pM_ps^{p}+\frac{Ks^{2}}{a(a-s)}$
  & ---
  & \eqref{eq:sstar-rung2-exact}\\
$2'$ & crash tail, $L\log L$ moment
  & $\frac{m_1T}{2}s^{2}+\frac{M}{\log T}s+\frac{Ks^{2}}{a(a-s)}$
  & ---
  & $s_\vartheta$\\
$2''$ & crash tail, two-sided second moment
  & $\frac{w_0s^{2}}{2}+\frac{K(s/a)^{3}}{1-s/a}$
  & ---
  & \eqref{eq:sstar-rung2pp}\\
3 & floor $b$, variance $w$
  & $\frac{w}{b^{2}}(e^{sb}-sb-1)$
  & $\frac{w}{b^{2}}h\!\lp\frac{bx}{w}\rp$
  & $\frac1b\log\!\lp1+\frac{bx}{w}\rp$\\
$3'$ & Bernstein moments, scale $\tau$
  & $\frac{ws^{2}}{2(1-\tau s)}$
  & $\frac{w}{\tau^{2}}h_1\!\lp\frac{\tau x}{w}\rp$
  & $\frac1\tau\lp1-\lp1+\frac{2\tau x}{w}\rp^{-\frac12}\rp$\\
4 & sub-Gaussian, proxy $\sigma^{2}$
  & $\frac{\sigma^{2}s^{2}}{2}$
  & $\frac{x^{2}}{2\sigma^{2}}$
  & $\frac{x}{\sigma^{2}}$\\
$4'$ & tilted variance $\le v$ on $[0,s_{\max})$
  & $\frac{vs^{2}}{2}$ on $[0,s_{\max})$
  & $\ \ge\frac x2\min\!\lC\frac xv,s_{\max}\rC$
  & $\min\!\lC\frac xv,s_{\max}\rC$\\
5a & range $r$
  & $\frac{r^{2}s^{2}}{8}$
  & $\frac{2x^{2}}{r^{2}}$
  & $\frac{4x}{r^{2}}$\\
5b & range $r$, position $y$
  & $syr+\log\!\lp ye^{-sr}+1-y\rp$
  & $\kl\!\lp\bar x\,\|\,y\rp$
  & $\frac1r\log\frac{y(1-\bar x)}{\bar x(1-y)}$\\
6 & i.i.d.\ increments
  & $sc-\Lambda_s(E)$
  & $\sup_{s\ge0}\{\Lambda_s(E)-s(c-x)\}$
  & \eqref{eq:sstar-rung6}\\
\midrule
--- & none
  & $+\infty$ for $s>0$
  & $0$, and unimprovable
  & ---\\
\bottomrule
\end{tabular}
\caption{The ladder, per observation. Here
$h(z)=(1+z)\log(1+z)-z$, $h_1(z)=1+z-\sqrt{1+2z}$,
$\kl(x\|y)=x\log\frac xy+(1-x)\log\frac{1-x}{1-y}$,
$\kappa=\log\rho+s_0c$, $\bar x=y-\frac xr$ and $q=\frac{p}{p-1}$. Rungs~2,
$2'$ and $2''$ share the crash-tail hypothesis \eqref{eq:h2} and differ in what
they assume in addition --- a one-sided $p$-th moment, an $L\log L$ moment, and
a two-sided second moment respectively; their transforms have no closed form,
but \eqref{eq:sstar-rung2} gives an explicit tilt for rung~2, and
\eqref{eq:rung2pp-small} and \eqref{eq:rung2pp-large} give explicit exponents
for rung~$2''$, valid for $x\le\frac{aw_0}{2}$ and for $ax>K$ respectively. In the moment-limited regime rung~2 gives
$\frac{x^{q}}{2(4c_pM_p)^{q-1}}$. Rung~5b requires $0\le x\le yr$.
The last row is not a rung: it is the assertion that no hypothesis at all buys
nothing at all, which is the no-free-lunch theorem
\cite[Thm.~3.7]{CEIL} of the companion paper, and it is what makes the rest of
the table necessary.}
\label{tab:ladder}
\end{table}

\begin{remark}[Which parameters may be predictable]\label{rem:promote}
\Cref{rem:whatsrandom} allows $c_n$ and $\psi_n$ to be predictable, and
\Cref{rem:predictable} says how each rung's per-step quantities are then
aggregated. It is worth separating three roles.

\emph{Free.} The quantities that enter a majorant additively or monotonically
may be predictable, provided a deterministic bound in the direction that
\emph{increases} the majorant is available $\Prob_1$-a.s.: above for
$\rho_n,K_n,M_{p,n},m_{1,n},M_n,w_{0,n},w_n,b_n,\sigma_n^{2},v_n,r_n$, below for
$c_n,\underline L_n,d_n$.

To this list belongs the common range $r$ of rung~5(b), with $r\ge r_n$: the
chord may always be taken on a larger interval, which is what
\Cref{rem:predictable} does.

\emph{Free, but in the direction that shrinks the tilt domain.} The parameters
that fix $s_{\max}$ may also be predictable, bounded so that the common domain
survives: $a\le a_n$ for rungs~2 and~$2''$, since $\psi_n$ decreases in $a$;
$\tau\ge\tau_n$ for rung~$3'$, since it increases in $\tau$; and $s_{\max}\le
s_{\max,n}$ for rung~$4'$.

\emph{Not free.} Three things must be numbers. The aggregate
$(C_t,\Psi^{(t)})$, hence $\Delta_t$, because it is the bound. The tilt $s$ in
\Cref{thm:master}(d)--(e), because those rest on one supermartingale built at
one tilt --- and with it $s_0$ in rung~1, which plays the same role. And the
moment order $p$ of rung~2, which indexes the hypothesis rather than scaling it.
The truncation level $T$ of rung~$2'$ is different again: $\psi_n(\cdot\,;T)$
is \emph{not} monotone in $T$ --- the term $\frac{M_n}{\log T}s$ decreases while
$\frac{m_{1,n}T}{2}s^{2}$ increases --- so a one-sided bound on a predictable
$T_n$ is not admissible. Since $T$ is a free design parameter rather than part
of the hypothesis, the remedy is simply to fix one deterministic $T>1$; a
predictable $T_n$ needs two-sided bounds $\underline T\le T_n\le\bar T$ and the
aggregate $\frac{m_1\bar T}{2}s^{2}+\frac{M}{\log\underline T}s
+\frac{Ks^{2}}{a(a-s)}$.

In \Cref{thm:empbern} predictable versions of $\lambda$ and $\bar r$ are
admissible but move inside the sums: \eqref{eq:eb-pred} exhibits this for
$\lambda$, and the substitution $\xi_n=(c_n-L_n)/\bar r_n$ does the same for
$\bar r$.
\end{remark}

\begin{remark}[The rungs do not form a chain]\label{rem:notachain}
It is tempting to read \Cref{tab:ladder} as a totally ordered list of
hypotheses. It is not one. What does hold, by \Cref{lem:implications}, is
\[
   5\ \Longrightarrow\ 3\ \Longrightarrow\ 4\ \Longrightarrow\ 1 ,
   \qquad
   3\ \Longrightarrow\ 2\ \Longrightarrow\ 1 ,
\]
with $5\Rightarrow4$ also available directly and with a far better constant
proxy. Everything else fails, and it fails for reasons worth naming.

Rung~4 does not imply rung~3, because \Cref{hyp:subgauss} is one-sided. Let
$L_n-c_n\ge0$ $\Prob_1$-a.s.\ with a heavy upper tail, say
$\Prob_1(L_n-c_n>x\mid\Fc_{n-1})=(1+x)^{-2}$. Then $e^{-s(L_n-c_n)}\le1$, so
$\varphi_n\le0$ and \Cref{hyp:subgauss} holds with $\sigma_n^{2}=0$; but
$\E_{\Prob_1}[(L_n-c_n)^{2}\mid\Fc_{n-1}]=\infty$, so no $w_n$ exists.

Rung~4 does not imply rung~2 either. Here one must be slightly more careful,
because the exponent $p$ in \Cref{hyp:crash} is existentially quantified, so a
tail with a finite $p'$-th moment for some $p'\in(1,p)$ would still qualify.
Take therefore
\[
   \Prob_1\lp L_n-c_n>x\mid\Fc_{n-1}\rp
   =\min\lC1,\ \frac{1}{x\log^{2}x}\rC\qquad(x>0) ,
\]
again with $L_n\ge c_n$ $\Prob_1$-a.s. The mean is finite, so the anchor holds
and $\varphi_n\le0$ gives \Cref{hyp:subgauss} with $\sigma_n^{2}=0$; but
$\E_{\Prob_1}[((L_n-c_n)_+)^{p}\mid\Fc_{n-1}]=\infty$ for \emph{every} $p>1$,
since $\int^{\infty}px^{p-2}\log^{-2}x\,dx$ diverges, so \eqref{eq:h2b} fails at
every admissible exponent. Sub-Gaussianity of the \emph{lower} tail is silent
about the winning side, which is exactly the point of \Cref{rem:winning}.

Rung~2 does not imply rung~4, because an exponential crash tail is not a
Gaussian one. Let $c_n-L_n=X-\frac1a-\epsilon$ with $X$ conditionally
$\mathrm{Exp}(a)$ and $\epsilon>0$. The anchor holds, \eqref{eq:h2} holds with
$K_n=e^{-1-a\epsilon}\le1$, and \eqref{eq:h2b} holds because the winning side is
bounded by $\frac1a+\epsilon$; but
$\varphi_n(s)=-s(\frac1a+\epsilon)+\log\frac{a}{a-s}\to\infty$ as $s\uparrow a$,
which no quadratic majorises. The same example has an unbounded crash side and
so does not satisfy rung~3 either. Rungs~2 and~4 are therefore
\emph{incomparable}. Rung~6 is comparable to none of them: it is a structural
hypothesis about the sequence, not a tail hypothesis about one step.

The picture is thus a partial order --- rung~5 at the top of the tail
hypotheses, rung~3 below it, rungs~2 and~4 incomparable beneath rung~3, rung~1
below everything --- and, importantly, a hypothesis being logically weaker does
not make its bound worse at a given horizon, since the constants a direct
argument supplies are much better than those the implications transport. That is
why all six are worth stating. An earlier draft of this material asserted a
total order; the assertion was false.
\end{remark}

\begin{remark}[When each rung becomes informative]\label{rem:vacuity}
Every bound above is trivial while $\Delta_t=0$, that is, before the horizon
$\ell/c$ at which the certified drift has accumulated past the threshold. That
much is unavoidable, and is not an artefact of the method: by
\cite[Lem.~5.10]{CEIL} a design of \epower{} $c$ has essentially no power
before $\ell/(2c)$ observations, whatever the majorant. The rungs differ in what
happens \emph{just after} $\ell/c$. Rungs~2--6 have $\psi(s)=o(s)$ as
$s\downarrow0$, hence $\psi^{\star}(x)>0$ for every $x>0$, and the bound becomes
informative immediately. Rung~1 has $\psi(s)=\frac{s}{s_0}\kappa$, linear at the
origin, and its rate vanishes identically up to $x=\kappa/s_0$; by
\eqref{eq:crit} it becomes informative only at $t_0\ge\ell/c$, the overshoot
factor $\frac{s_0c}{s_0c-\kappa}\ge1$ being the price of assuming nothing but
one moment.
\end{remark}

\section{An observable rung}
\label{sec:observable}

Every rung of \Cref{sec:ladder} is stated in terms of \emph{population}
constants under $\Prob_1$: a variance $w$, a range $r$, a crash exponent $a$. In
practice these are asserted rather than known, and of them the variance is the
one an experimenter is least able to assert honestly. This section gives the
rung that can be \emph{observed} instead: an empirical-Bernstein floor whose
rejection-time certificate is a stopping time, evaluable from the data as they
arrive.

It is worth being precise about the scope, since it is easy to overstate. Only
the variance becomes observable. The drift certificate of \Cref{def:main}(a) may
depend on the past --- $c_n$ is predictable, not a number --- but it remains a
hypothesis about $\Prob_1$, and the rule below does not validate it: if the asserted drift is wrong the rule will still fire, and
the conclusion it licenses will be false. What this rung removes is the
variance, not the requirement to have asserted a growth rate.

\begin{hypothesis}[A ceiling on the deficit]\label{hyp:ceiling}
There is a constant $\bar r\in(0,\infty)$ with
\begin{equation}\label{eq:h6}
   L_n\ \le\ c_n+\bar r
   \qquad\Prob_1\text{-a.s., for every }n\in\N .
\end{equation}
\end{hypothesis}

This is a \emph{ceiling}, not a floor, so \Cref{hyp:ceiling} and
\Cref{hyp:freedman} are not comparable and this rung is not a weakening of
rung~3. A clipped likelihood ratio in a Gaussian location model satisfies
\Cref{hyp:freedman} but is unbounded above, so \eqref{eq:h6} holds for no finite
$r$; conversely \eqref{eq:h6} says nothing about how far a single bet may crash.
What the rung trades is a floor and a variance for a ceiling and an observable.
(\Cref{hyp:bounded} with $\bar r\ge\overline L_n-c_n$ is a special case.)

\begin{theorem}[Empirical-Bernstein floor and an observable rejection time]
\label{thm:empbern}
Assume \Cref{hyp:ceiling} and put
\[
   \widehat V_t\ :=\ \sum_{n\le t}\lp c_n-L_n\rp^{2}
   \ =\ \sum_{n\le t}\lp c_n-\log E_n\rp^{2} ,
   \qquad
   \psi_{\mathrm E}(\lambda):=-\log(1-\lambda)-\lambda ,
\]
$\psi_{\mathrm E}$ being a fixed function and not one of the majorants of
\Cref{def:main}, so that $\widehat V_t$ --- the \emph{realised} quadratic variation of the
deficit --- is $\Fc_t$-measurable and computable from the data. The engine is
Fan's one-sided exponential inequality \cite{FGL15}, quoted as
\Cref{thm:app-fan}. Fix constants
$\lambda\in(0,1)$ and $\delta\in(0,1)$. Then, with $\Prob_1$-probability at
least $1-\delta$, simultaneously for all $t\in\N_0$,
\begin{equation}\label{eq:eb-floor}
   \log W_t\ >\ C_t
     -\frac{\bar r}{\lambda}\log\frac1\delta
     -\frac{\psi_{\mathrm E}(\lambda)}{\lambda\,\bar r}\,\widehat V_t
   \ \ge\ C_t-\frac{\bar r}{\lambda}\log\frac1\delta
     -\frac{\lambda\,\widehat V_t}{2(1-\lambda)\,\bar r} .
\end{equation}
Consequently
\begin{equation}\label{eq:eb-stop}
   \Prob_1\lp\tau_\alpha\le\widehat T\rp\ \ge\ 1-\delta ,
   \qquad
   \widehat T:=\inf\lC t\in\N:\
      \frac{\lambda\lp C_t-\ell\rp}{\bar r}\ \ge\
      \log\frac1\delta+\frac{\psi_{\mathrm E}(\lambda)}{\bar r^{2}}\,\widehat V_t\rC ,
\end{equation}
and $\widehat T$ is a stopping time: the experimenter evaluates the certificate
at every $t$ without knowing any population variance.
\end{theorem}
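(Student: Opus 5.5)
The plan is to build a $\Prob_1$-supermartingale from the normalised deficits and apply Ville's inequality to it. Put $\xi_n:=(c_n-L_n)/\bar r$. By \eqref{eq:h6}, $\xi_n\ge-1$ $\Prob_1$-a.s. By the drift certificate \eqref{eq:anchor}, $\E_{\Prob_1}[\xi_n\mid\Fc_{n-1}]\le0$, since $c_n$ is predictable. So $(\xi_n)$ is a supermartingale difference sequence bounded below by $-1$. This is exactly the setting of Fan's one-sided inequality (\Cref{thm:app-fan}). I will use it in its supermartingale form: for $\lambda\in[0,1)$ and $\xi\ge-1$ with $\E[\xi\mid\Fc]\le0$,
\[
   \E\lB\exp\lp\lambda\xi-\psi_{\mathrm E}(\lambda)\xi^{2}\rp\mid\Fc\rB\le1 .
\]
The underlying pointwise inequality is $\exp(\lambda x-\psi_{\mathrm E}(\lambda)x^{2})\le1+\lambda x$ for $x\ge-1$, and taking conditional expectations finishes it. Consequently
\[
   M_t:=\exp\Bigl(\lambda\sum_{n\le t}\xi_n-\psi_{\mathrm E}(\lambda)\sum_{n\le t}\xi_n^{2}\Bigr)
\]
is a nonnegative $\Prob_1$-supermartingale with $M_0=1$. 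The case where $L_n=-\infty$ (so $\xi_n=+\infty$) needs a brief check. If that case is excluded by finiteness of $E_n$ or handled by the conventions, the pointwise inequality still holds, or else it is vacuous because the event has probability zero under $\Prob_1$ after we use $\Prob_1(E_n>0)$; I will state this explicitly rather than grind through it.

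Ville's inequality (\Cref{thm:app-ville}) gives $\Prob_1(\sup_t M_t\ge1/\delta)\le\delta$. Hence, with probability at least $1-\delta$, simultaneously for all $t$,
\[
   \frac{\lambda}{\bar r}\sum_{n\le t}(c_n-L_n)-\frac{\psi_{\mathrm E}(\lambda)}{\bar r^{2}}\widehat V_t<\log\frac1\delta .
\]
Rearranging, and using $\sum_{n\le t}c_n\ge C_t$ from \eqref{eq:aggregate}, yields
\[
   \log W_t=\sum_{n\le t}L_n>\sum_{n\le t}c_n-\frac{\bar r}{\lambda}\log\frac1\delta-\frac{\psi_{\mathrm E}(\lambda)}{\lambda\bar r}\widehat V_t ,
\]
which is the first inequality of \eqref{eq:eb-floor} after replacing $\sum_{n\le t}c_n$ by $C_t$. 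The second inequality is the elementary bound
\[
   \psi_{\mathrm E}(\lambda)=\sum_{k\ge2}\frac{\lambda^{k}}{k}\le\frac{\lambda^{2}}{2}\sum_{j\ge0}\lambda^{j}=\frac{\lambda^{2}}{2(1-\lambda)} .
\]

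For \eqref{eq:eb-stop}, work on the good event and at $t=\widehat T$ when it is finite. The defining inequality of $\widehat T$ rearranges to
\[
   C_t-\frac{\bar r}{\lambda}\log\frac1\delta-\frac{\psi_{\mathrm E}(\lambda)}{\lambda\bar r}\widehat V_t\ge\ell ,
\]
so the strict floor gives $\log W_{\widehat T}>\ell$, that is $W_{\widehat T}>1/\alpha$, and therefore $\tau_\alpha\le\widehat T$. When $\widehat T=\infty$ the inclusion is trivial. Together these give $\Prob_1(\tau_\alpha\le\widehat T)\ge1-\delta$. That $\widehat T$ is a stopping time is immediate, because $C_t$ is deterministic and $\widehat V_t$ is $\Fc_t$-measurable (the $c_n$ being predictable).

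The main obstacle is getting the supermartingale property with only $\E[\xi_n\mid\Fc_{n-1}]\le0$ rather than equality. This works because $1+\lambda x$ is increasing in $x$ with $\lambda\ge0$, so the inequality direction is favourable. A further point is checking the pointwise inequality on the whole range $x\ge-1$, including large positive $x$, which is the crash side. I would verify it by showing that $g(x)=\log(1+\lambda x)-\lambda x+\psi_{\mathrm E}(\lambda)x^{2}$ is nonnegative on $[-1,\infty)$. At $x=-1$ it equals zero. Its sign elsewhere follows from a convexity/derivative analysis, which is the content of \Cref{thm:app-fan}; I would cite it there rather than reprove it.
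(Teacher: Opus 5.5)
Your proposal is correct and follows the paper's proof essentially step for step. It uses the same normalised deficits $\xi_n=(c_n-L_n)/\bar r\ge-1$, Fan's pointwise inequality together with $\E_{\Prob_1}[\xi_n\mid\Fc_{n-1}]\le0$ to obtain the supermartingale, Ville's inequality, rearrangement with $\sum_{n\le t}c_n\ge C_t$, and the same reading-off of $\widehat T$. The case you left hedged is settled as the paper does: $\xi_n\ge-1$ makes the conditional mean well defined and the anchor makes it $\le0$, which forces $\xi_n<\infty$, i.e.\ $E_n>0$, $\Prob_1$-a.s.; finiteness of $E_n$ is the wrong direction here, since it only gives $\xi_n>-\infty$.
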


\begin{remark}[What it buys, exactly]\label{rem:eb}
The deficit in the second form of \eqref{eq:eb-floor} is
$D(\lambda)=\frac{\bar r}{\lambda}\log\frac1\delta
+\frac{\lambda\widehat V_t}{2(1-\lambda)\bar r}$. Tuning $\lambda$ in advance to
a target variance $\bar V$ gives a closed form: writing
$\Lambda_\delta:=\log\frac1\delta$ for this remark only,
\begin{equation}\label{eq:eb-tuned}
   \lambda^{\ast}
   =\frac{\bar r\sqrt{2\Lambda_\delta}}
         {\sqrt{\bar V}+\bar r\sqrt{2\Lambda_\delta}} ,
   \qquad
   \min_{\lambda\in(0,1)}\lB
      \frac{\bar r\Lambda_\delta}{\lambda}
      +\frac{\lambda\bar V}{2(1-\lambda)\bar r}\rB
   \ =\ \sqrt{2\,\bar V\,\Lambda_\delta}\ +\ \bar r\,\Lambda_\delta ,
\end{equation}
an identity, not an asymptotic statement. So at a perfect match
$\widehat V_t=\bar V$ the floor is $C_t-\sqrt{2\bar V\log\frac1\delta}
-\bar r\log\frac1\delta$: the first term is what a known variance would cost,
matching the optimised form of \eqref{eq:unif-rung4}, and the additive
$\bar r\log\frac1\delta$ is the price of the ceiling. A mismatch costs more,
increasingly in $\widehat V_t/\bar V$. The gain is that the \emph{realised}
$\widehat V_t$ replaces a worst case: if \Cref{hyp:bounded} also holds, so that
$\bar r$ may be taken to be the range $r$, then $\widehat V_t\le tr^{2}$ and
\Cref{cor:rung5}(a) must pay that bound whatever happens, while
\eqref{eq:eb-floor} pays only what the data show. This is why the betting
constructions of \Cref{ex:betting} perform well in practice \cite{WSR24}: they
have a large range and a small realised deficit.
\end{remark}

\begin{remark}[What is not true]\label{rem:eb-lil}
One cannot optimise $\lambda$ at the realised $\widehat V_t$ and keep the
time-uniform guarantee. The minimiser is a function of the data, and the
resulting claim --- that
$\sum_{n\le t}(c_n-L_n)\le\sqrt{2\widehat V_t\log\frac1\delta}+\bar r
\log\frac1\delta$ for all $t$ --- is false, by the law of the iterated
logarithm. For i.i.d.\ Rademacher deficits, where $\widehat V_t=t$ and
$\bar r=1$, and at $\delta=0.05$, its failure probability is about $0.014$ by
$t=10^{2}$ and about $0.090$ by $t=10^{4}$, and tends to one; for the still
more naive form without the additive $\bar r\log\frac1\delta$ the same
probabilities are $0.063$ and $0.148$.

Two remedies are legitimate. The first is a \emph{predictable} plug-in
$\lambda_n\propto\sqrt{\log\frac1\delta\,/\,\widehat V_{n-1}}$
\cite[\S3.2]{WSR24}. \Cref{thm:app-fan} is a pointwise inequality, so it accepts
any $[0,1)$-valued $\lambda_n$; what predictability buys is the
conditional-expectation step that makes the product a supermartingale. The
conclusion is then the $\lambda$-weighted analogue of \eqref{eq:eb-floor}:
\emph{every} occurrence of $\lambda$ moves inside the sums, giving, with
$\Prob_1$-probability at least $1-\delta$ and simultaneously for all $t$,
\begin{equation}\label{eq:eb-pred}
   \sum_{n\le t}\lambda_nL_n
   \ >\ \sum_{n\le t}\lambda_nc_n-\bar r\log\frac1\delta
     -\frac1{\bar r}\sum_{n\le t}\psi_{\mathrm E}(\lambda_n)
        \lp c_n-L_n\rp^{2} .
\end{equation}
Note that this consumes the individual $c_n$ and not the aggregate $C_t$, and
that its left-hand side is no longer $\log W_t$; converting it into a statement
about $\tau_\alpha$ therefore needs $\lambda_n$ bounded below. The second
remedy is a mixture over $\lambda$, at the usual $\sqrt{\log\log}$ price.
\end{remark}

\begin{remark}[Relation to the literature]\label{rem:eb-prior}
The empirical-Bernstein line \cite{MP09,AMS09,How21,WSR24} is concerned with
confidence sequences and estimation; the point here is that the same
supermartingale yields a type-II statement, and that in this direction the
resulting bound is evaluable online. \Cref{thm:empbern} is closest to
Balsubramani and Ramdas \cite{BR16}, who prove a uniform empirical-Bernstein
inequality and, separately, bound the expected stopping time of a sequential
test --- though there the stopping-time bound comes from comparison with an
oracle batch test rather than from inverting the empirical-Bernstein boundary
itself.
\end{remark}

\section{Composite alternatives}
\label{sec:composite}

Nothing so far has used that the alternative is a single law. This section
records what survives when it is not, and what does not. Let $\Hc_1$ be a set of
probability measures on $(\Omega,\Fc)$, each satisfying \eqref{eq:ac}.

\begin{proposition}[Uniformity is free, once the constants are uniform]
\label{prp:composite}
Let $(E_n)_{n\in\N}$ be a betting sequence. Suppose the aggregate of any one
rung can be chosen uniformly over $\Hc_1$: that is, $(c_n)_{n\in\N}$ satisfies
\eqref{eq:anchor} and $C_t$ satisfies \eqref{eq:aggregate} under every
$\Prob\in\Hc_1$, and the majorant constants are
chosen at their worst case --- for \Cref{hyp:freedman}, both
$w\ge\sup_{\Prob\in\Hc_1}\sum_{n\le t}w_n(\Prob)$ and
$b\ge\sup_{\Prob\in\Hc_1}\max_{n\le t}b_n(\Prob)$, since the exponent
\eqref{eq:rate-rung3} is monotone in each; for \Cref{hyp:subgauss},
$V_t\ge\sup_{\Prob\in\Hc_1}\sum_{n\le t}\sigma_n^{2}(\Prob)$. Then the conclusion of that rung holds verbatim with
$\Prob_1$ replaced by $\sup_{\Prob\in\Hc_1}\Prob$:
\[
   \sup_{\Prob\in\Hc_1}\ \Prob\lp\tau_\alpha>t\rp
   \ \le\ \sup_{\Prob\in\Hc_1}\ \Prob\lp W_t\le\tfrac1\alpha\rp
   \ \le\ \exp\lp-\lp\Psi^{(t)}\rp^{\star}(\Delta_t)\rp .
\]
\end{proposition}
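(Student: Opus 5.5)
The plan is to apply \Cref{thm:master} separately under each $\Prob\in\Hc_1$ with one and the same deterministic aggregate, and then take the supremum. Nothing in the proof of \Cref{thm:master}(a)--(b) refers to $\Prob_1$ except as the measure under which the drift certificate, the majorant and the aggregate inequalities hold. So the first step is to fix $\Prob\in\Hc_1$ and verify that $(\Omega,\Fc,(\Fc_t),\Prob_0,\Prob)$ is an instance of \Cref{def:setting}, using \eqref{eq:ac} for $\Prob$, and that the given $(E_n)$ is still a betting sequence there. The latter is immediate, since \eqref{eq:cevar} concerns only $\Prob_0$, and finiteness of $E_n$ $\Prob$-a.s.\ follows as in \Cref{rem:notevar}.

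The second step is to check that the uniform constants form an aggregate in the sense of \Cref{def:main} under $\Prob$. For the drift, $c_n$ satisfies \eqref{eq:anchor} under $\Prob$ and $\sum_{n\le t}c_n\ge C_t$ holds $\Prob$-a.s.\ by assumption. For the majorant, consider \Cref{hyp:freedman} as the model case. The rung's per-step majorant $\psi_n$ under $\Prob$ uses $w_n(\Prob)$ and $b_n(\Prob)$. The aggregation step of \Cref{cor:rung3} (via \Cref{lem:elem}) shows that $\sum_{n\le t}\psi_n(s)\le\frac{w'}{b'^2}(e^{sb'}-sb'-1)$ for any $w'\ge\sum_n w_n(\Prob)$ and $b'\ge\max_n b_n(\Prob)$. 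This bound is nondecreasing in $w'$ and, because $b\mapsto(e^{sb}-sb-1)/b^2$ is nondecreasing, also nondecreasing in $b'$. Hence the worst-case pair $(w,b)$ gives a single deterministic $\Psi^{(t)}$ valid $\Prob$-a.s.\ for every $\Prob$. The sub-Gaussian case is the same check with $\Psi^{(t)}(s)=V_ts^2/2$, which is linear in $V_t$. Other rungs follow the termwise monotonicity recorded in \Cref{rem:predictable} and \Cref{rem:promote}.

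The third step applies \Cref{thm:master}(b) under $\Prob$. This gives $\Prob(\tau_\alpha>t)\le\Prob(W_t\le\frac1\alpha)\le\exp(-(\Psi^{(t)})^\star(\Delta_t))$, with the ordering coming from \Cref{lem:ordering}, which is pathwise. The right-hand side involves only $C_t$, $\ell$ and $\Psi^{(t)}$, none of which depends on $\Prob$. Taking the supremum over $\Hc_1$ on both sides yields the claim. The supremum of the first probabilities is at most the supremum of the second, because the inequality holds for each $\Prob$.

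The main obstacle is the second step: making sure that replacing the per-$\Prob$ constants by their suprema really produces a majorant. This requires the monotonicity of $\Psi^{(t)}$ in each constant, which is exactly why the statement stresses the direction (\emph{above} for $w,b,V_t$). For rung~3 I would check the monotonicity in $b$ directly by differentiating $(e^{sb}-sb-1)/b^2=\sum_{k\ge2}s^kb^{k-2}/k!$, which has nonnegative coefficients.
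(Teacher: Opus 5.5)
Your proposal is correct and follows the paper's own proof. You fix $\Prob\in\Hc_1$, note that the common drift certificate and the worst-case constants give a deterministic aggregate under $\Prob$, apply \Cref{thm:master}(b) under $\Prob$, and take the supremum on the left, since the right-hand side does not depend on $\Prob$. Your rung~3 verification, which dominates the $\Prob$-dependent per-step majorants by one aggregate via the aggregation step of \Cref{cor:rung3}, spells out what the paper compresses into ``the stated constants furnish a majorant''; your separate monotonicity check in $b$ is already contained in \Cref{lem:elem}.
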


\noindent
The proof is one line --- \Cref{thm:master} is applied under each $\Prob$
separately and the bound does not depend on which --- and that is the whole
point: the ladder needs uniformity of the \emph{aggregate}, never of the
argument. What it costs is that the worst case over $\Hc_1$ is what enters, and
for a rich $\Hc_1$ the worst case can be trivial.

\begin{remark}[Separation is necessary]\label{rem:separation}
Suppose $\Hc_1$ accumulates at the null, in the sense that
$\inf_{\Prob\in\Hc_1}\E_{\Prob}[\log E_n\mid\Fc_{n-1}]$ can be made arbitrarily
close to $0$. Then no uniform drift certificate with $c_n>0$ exists, $\Delta_t=0$
for every $t$, and every bound in \Cref{tab:ladder} is trivial at every
horizon. This is not an artefact of the method. In the i.i.d.\ setting the
\epower{} of any \evar{} is at most $\KL(P_1\|P_0)$
\cite[Lem.~2.20]{CEIL}, which tends to $0$ as $P_1\to P_0$, so the certified
horizon $\ell/c$ diverges; and by \cite[Lem.~5.10]{CEIL} --- for an i.i.d.\ product with
$v:=\Var_{P_1}(\log E)<\infty$ and $c\ell>2v$ --- such a design has
$\bar\gamma_t(\alpha)\ge1-\frac{2v}{c\ell}$ at every $t\le\frac{\ell}{2c}$,
whatever the majorant. A composite alternative must be separated from the null for a
uniform type-II guarantee to exist at all --- exactly as in the fixed-horizon
theory.
\end{remark}

\begin{remark}[The composite ceiling, and which scalar to maximise]
\label{rem:composite-ceiling}
In the i.i.d.\ setting, applying the Chernoff--Stein ceiling
\cite[Prop.~4.15]{CEIL} under each $P_1\in\Hc_1$ with
$\KL(P_0\|P_1)<\infty$ gives
\[
   \liminf_{t\to\infty}\ \frac1t\log
   \sup_{P_1\in\Hc_1}P_1^{\otimes\N}\lp W_t\le\tfrac1\alpha\rp
   \ \ge\ -\inf_{P_1\in\Hc_1}\KL(P_0\|P_1)
\]
for every betting sequence: against the worst member of the alternative, no
design can achieve an exponent better than that of the member \emph{closest to
the null}, which is the hardest to detect. Whether that ceiling is approachable by a
design that does not know which $\Prob$ obtains is a different question, and it
is not settled here.

What can be said is which scalar \emph{not} to maximise. The standard composite
design criterion is growth-rate optimality, maximising
$\inf_{\Prob\in\Hc_1}\E_{\Prob}[\log E]$ over \evars{} \cite{GHK24,LRR25}. By
\cite[Thm.~4.29]{CEIL} that is not the criterion the Chernoff--Stein exponent responds
to, already for a simple alternative: the \epower{} is capped by
$\KL(P_1\|P_0)$ and attained uniquely by the likelihood ratio, while the
exponent is capped by $\KL(P_0\|P_1)$ and attained by nobody. The composite
problem inherits that separation, since it contains the simple one.
\end{remark}

\section{The price of the threshold}
\label{sec:secondorder}

The bounds of \Cref{sec:ladder} depend on the level only through
$\Delta_t=(C_t-\ell)_+$, so the whole effect of $\alpha$ on the exponent is a
shift of the gap. The following says exactly what that shift costs, for every
rung at once, and with no error term.

\begin{proposition}[The level costs a factor $\alpha^{-s^{\ast}}$]
\label{prp:price}
In the homogeneous case of \Cref{cor:iid} --- $C_t=tc$, $\Psi^{(t)}=t\psi$ ---
let $s^{\ast}(c)$ be \emph{any} maximiser in
$\psi^{\star}(c)=\sup_{s}\{sc-\psi(s)\}$, assumed to exist; equivalently, any
element of the superdifferential of $\psi^{\star}$ at $c$. When $\psi$ is
finite, differentiable and strictly convex on $(0,s_{\max})$ this is the unique
solution of $\psi'(s^{\ast})=c$, and when $\psi$ is the chord of
\Cref{cor:rung1} it is $s_0$. Then for every $t$ with $tc\ge\ell$,
\begin{equation}\label{eq:price}
   \gamma_t(\alpha)\ \le\ \alpha^{-s^{\ast}(c)}\,e^{-t\,\psi^{\star}(c)}
   \ =\ \exp\lp-t\,\psi^{\star}(c)+s^{\ast}(c)\,\ell\rp .
\end{equation}
\end{proposition}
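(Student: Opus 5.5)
The plan is to feed one fixed tilt into the negative-moment bound \eqref{eq:negmom} of \Cref{thm:master}(a) and then apply Markov's inequality, rather than passing through the optimised form \eqref{eq:master}. The latter would give $\exp(-t\psi^{\star}(\Delta_t/t))$, and we want to compare it with $\psi^{\star}(c)$. Fix $t$ with $tc\ge\ell$, so that $\Delta_t=tc-\ell\ge0$, and take $s=s^{\ast}:=s^{\ast}(c)$. The case $s^{\ast}=0$ is trivial: then $\psi^{\star}(c)=0-\psi(0)=0$ and the right-hand side of \eqref{eq:price} equals $1$.

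For $s^{\ast}>0$, note that $\psi^{\star}(c)\ge0$ and $\psi(s^{\ast})=s^{\ast}c-\psi^{\star}(c)$ is finite, so $s^{\ast}$ lies in $[0,s_{\max})$. Use the map $x\mapsto x^{-s^{\ast}}$, which is nonincreasing, together with Markov's inequality:
\[
\gamma_t(\alpha)=\Prob_1\bigl(W_t^{-s^{\ast}}\ge\alpha^{s^{\ast}}\bigr)\le\alpha^{-s^{\ast}}\E_{\Prob_1}\bigl[W_t^{-s^{\ast}}\bigr]\le\alpha^{-s^{\ast}}\exp\bigl(-s^{\ast}tc+t\psi(s^{\ast})\bigr),
\]
where the last step is \eqref{eq:negmom} with $C_t=tc$ and $\Psi^{(t)}=t\psi$. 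Since $s^{\ast}$ attains the supremum, $s^{\ast}c-\psi(s^{\ast})=\psi^{\star}(c)$, and this yields exactly $\alpha^{-s^{\ast}}e^{-t\psi^{\star}(c)}$. The condition $tc\ge\ell$ is only needed so that the statement sits in the informative regime. Alternatively, one can argue via $\psi^{\star}(\Delta_t/t)\ge s^{\ast}(c-\ell/t)-\psi(s^{\ast})=\psi^{\star}(c)-s^{\ast}\ell/t$ and then invoke \eqref{eq:master}. This route uses $\Delta_t/t=c-\ell/t$, which requires $tc\ge\ell$, and I will present it as the main argument since it matches the statement's hypothesis.

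The remaining points are side remarks. The characterisation via the superdifferential is standard convex duality: $s$ maximises $sc-\psi(s)$ if and only if $s\in\partial\psi^{\star}(c)$, read with the appropriate sign convention for the concave/convex pairing on $s\ge0$. In the differentiable strictly convex case, the first-order condition $\psi'(s^{\ast})=c$ with uniqueness follows as in \Cref{thm:master}(c). For the chord, $sc-\frac{s}{s_0}\kappa$ is linear on $[0,s_0]$ with slope $c-\kappa/s_0=\frac1{s_0}\log\frac1\rho$, which is positive when $\rho<1$, so the maximiser is $s_0$.

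The only delicate step is ensuring that the maximiser lies in the admissible tilt domain $[0,s_{\max})$ where the majorant is valid. I expect this to follow from the finiteness of $\psi(s^{\ast})$; otherwise I will restrict the supremum to that domain, as \Cref{def:rate} does.
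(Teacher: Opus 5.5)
Your designated main argument is exactly the paper's proof, and it is correct: evaluate the supremum defining $\psi^{\star}(\Delta_t/t)$ at $s=s^{\ast}(c)$ to get $t\,\psi^{\star}(\Delta_t/t)\ge t\,\psi^{\star}(c)-s^{\ast}\ell$, then invoke \Cref{cor:iid}. The fixed-tilt route you sketch first (Markov's inequality plus \eqref{eq:negmom} at $s=s^{\ast}$) is also valid, since it is the same computation with the optimisation skipped, and it holds for every $t$, not only for $tc\ge\ell$.
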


\begin{remark}[Reading \eqref{eq:price}]\label{rem:price}
Three things are worth extracting.

First, the asymptotic rate is $\psi^{\star}(c)$ per observation --- the value of
the rate function at the \emph{full} drift, with no reference to the level ---
and the level enters only through the constant $\alpha^{-s^{\ast}}$. Halving
$\alpha$ therefore costs $s^{\ast}\log2$ in the exponent once and for all, not
a fraction of the rate: the optimising tilt is the price per nat of threshold.
This is the structural reason every entry of \Cref{tab:ladder} has a tilt
column.

Second, \eqref{eq:price} is not asymptotic. It follows from convexity of
$\psi^{\star}$ alone --- a convex function lies above its tangent --- so it
holds at every horizon past $\ell/c$, and the loss relative to
\Cref{cor:iid} is the curvature term $\frac{\ell^{2}}{2t}(\psi^{\star})''$,
which vanishes as $t$ grows.

Third, the two factors pull in opposite directions along a ladder. Rung~1 has
$s^{\ast}=s_0$ and $\psi^{\star}(c)=(s_0c-\kappa)_+$ --- for which
\eqref{eq:price} is an identity, not an inequality, since $\psi^{\star}$ is
affine there --- so both shrink together as $s_0$ does; rung~5a has
$s^{\ast}=4c/r^{2}$ and $\psi^{\star}(c)=2c^{2}/r^{2}$, both governed by the
range. A design chosen to maximise the rate is not in
general one that keeps the threshold cheap.
\end{remark}

\begin{remark}[What is left to the companion paper]\label{rem:tocompanion}
\Cref{prp:price} says what a \emph{given} majorant costs. It says nothing about
how large $\psi^{\star}(c)$ can be, which is a question about the \evar{} and
not about the hypothesis, and which the companion paper answers: over all
\evars{} in the i.i.d.\ setting, $\sup_E\Lambda(E)=\KL(P_0\|P_1)$, the supremum
is not attained, and the designs that approach it are the flattened likelihood
ratios $R_\beta=R^{\beta}/Z_\beta$ \cite[Thm.~4.25]{CEIL}. The price of
approaching it is quantified there too: at horizon $t$ the optimal flattening is
$\beta^{\ast}(t)=\sqrt{2\ell/(tV_0)}$ with $V_0=\Var_{P_0}(\log R)$, and the
resulting error obeys
\[
   \gamma_t(\alpha)\ \le\
   \exp\lp-t\,\KL(P_0\|P_1)+\sqrt{2\ell tV_0}+O(1)\rp
\]
\cite[Thm.~5.6]{CEIL}. The two statements compose: \Cref{prp:price} is the
price of the level at a fixed design, and \cite[Thm.~5.6]{CEIL} is the price of
the design at a fixed level. Neither is used in proving the other.
\end{remark}

\section{Discussion}
\label{sec:discussion}

\begin{remark}[The hypotheses are about the bet, not about the problem]
\label{rem:whatabout}
It is easy to read \Cref{sec:ladder} as a list of regularity conditions on the
testing problem, and to object that they are unverifiable. They are not
conditions on the problem. Every rung constrains the \emph{approximation} to
$R=\frac{dP_1}{dP_0}$ that one chooses to bet with, and the exact likelihood
ratio satisfies the weakest of them for nothing: $\E_{P_1}[R^{-1}]=P_0(R>0)\le1$
gives \Cref{hyp:cramer} at $s_0=1$ with $\rho\le1$ (\Cref{rem:lrfree}). What the
ladder measures is therefore the cost of \emph{not} betting the likelihood
ratio --- of using a learned score, a misspecified model, a plug-in estimate ---
and the cost is paid in the crash direction only, which is the asymmetry
\Cref{rem:winning} keeps returning to.

This also explains why the hypotheses can be enforced rather than assumed.
\Cref{ex:clipping} buys rungs~1 and~2 for an arbitrary \evar{};
\Cref{ex:twosided} buys rung~5, also for an arbitrary \evar{}, at a range
growing like $2\log\frac1\nu$; \Cref{ex:betting} lands a bounded stake on
rung~5 by construction. The design question is not whether a rung holds but
which rung to buy, and at what price in \epower{}.
\end{remark}

\begin{remark}[What the reduction does and does not do]\label{rem:reduction}
The technical content of this paper is small and deliberately so.
\Cref{thm:master} is three lines: a conditional Chernoff bound, the tower
property, and Markov's inequality. Everything else is the observation that this
reduction is \emph{lossless} in the sense of \Cref{prp:duality} --- what it
extracts from a constraint on the conditional mean is exactly the value of an
information projection --- so that every loss in \Cref{tab:ladder} is
attributable to the majorant and to nothing else. That is what makes the ladder
a meaningful comparison of hypotheses rather than a list of unrelated
estimates: the rungs differ only in how much a tail hypothesis fails to pin down
the conditional law.

Three limitations should be stated plainly. The bounds are Cram\'er--Chernoff
bounds and therefore discard a polynomial factor; at the top rung the discarded
factor is the Bahadur--Rao prefactor of order $t^{-1/2}$, quantified in
\cite[Rem.~5.9]{CEIL}. The time-uniform forms
\Cref{thm:master}(d)--(e) fix the tilt in advance and so are not adaptive; only
\Cref{sec:observable} removes an unknown, and only one. And the drift
certificate is an assumption throughout: no rung validates it, and a wrong
$c_n$ produces a confident and false conclusion.
\end{remark}

\begin{remark}[Relation to the companion paper]\label{rem:relation}
The two papers answer complementary questions about the same object. This one
asks what exponent can be \emph{certified} from a hypothesis on the lower tail
of $\log E_n$, and answers it with one Legendre transform per hypothesis, under
an arbitrary filtration and with no product structure. The companion
\cite{CEIL} asks how large the exponent can \emph{be}, over all \evars{}, and
answers that it is bounded by $\KL(P_0\|P_1)$ --- the Chernoff--Stein exponent,
in the opposite direction to the divergence that caps the \epower{} --- with the
bound tight and not attained. The two meet at rung~6 and nowhere else: there the
majorant may be taken to be the truth, the certified exponent equals
$\Lambda(E)$, and \cite{CEIL} is what bounds $\Lambda(E)$ in turn. Neither main
theorem is used in proving the other.
\end{remark}

\phantomsection
\addcontentsline{toc}{section}{Acknowledgements}
\section*{Acknowledgements}

This note was prepared with the assistance of Claude Opus 5, a large language
model developed by Anthropic. Based on the author's notes, ideas and input, the
model was used to re-draft and restructure the exposition, to prepare the
typescript, to search for and cross-check references, to check and complete
arguments, and to provide feedback, which the author used to refine further
inputs. All references, statements and proofs have been verified by the author,
who takes full responsibility for the content, including any remaining errors.

\phantomsection

\appendix

\section{Deferred proofs}
\label{app:proofs}

Proofs appear in the order of the statements. Each rung subsection verifies the
majorant, computes the Legendre transform and identifies $s^{\ast}$;
\Cref{app:implications} then compares the hypotheses with one another.

\subsection{The one-step lemma}

\noindent\emph{Proof of \Cref{lem:onestep}.}

Since $E_n^{-s}=e^{-sL_n}=e^{-sc_n}e^{-s(L_n-c_n)}$ and $e^{-sc_n}$ is
$\Fc_{n-1}$-measurable and nonnegative, it comes out of the conditional
expectation:
\[
   \E_{\Prob_1}\lB E_n^{-s}\mid\Fc_{n-1}\rB
   =e^{-sc_n}\,\E_{\Prob_1}\lB e^{-s(L_n-c_n)}\mid\Fc_{n-1}\rB
   \ \le\ e^{-sc_n}e^{\psi_n(s)}
\]
$\Prob_1$-a.s., by \eqref{eq:majorant}. \qed

\subsection{The master principle}

\noindent\emph{Proof of \Cref{thm:master}.}

\emph{Step 0: one supermartingale carries (a) and (d).} Fix
$s\in[0,s_{\max})$ and put
\begin{equation}\label{eq:tilted}
   N_0:=1,
   \qquad
   N_t:=W_t^{-s}\exp\lp s\sum_{n\le t}c_n-\sum_{n\le t}\psi_n(s)\rp
   \qquad(t\in\N) .
\end{equation}
We claim that $(N_t)_{t\in\N_0}$ is a nonnegative $\Prob_1$-supermartingale.
It is adapted and nonnegative. The conditional \evar{} property forces
$E_n<\infty$ $\Prob_0$-a.s., and $P_1\ll P_0$ gives $\Prob_1\ll\Prob_0$ on
$\Fc_n$, so $E_n<\infty$ $\Prob_1$-a.s.; hence
$W_t^{-s}=W_{t-1}^{-s}E_t^{-s}$ holds $\Prob_1$-a.s.\ (the identity can fail
only where one factor is $0$ and another $\infty$), and therefore
$N_t=N_{t-1}\,e^{sc_t-\psi_t(s)}E_t^{-s}$. The factor
$N_{t-1}e^{sc_t-\psi_t(s)}$ is nonnegative and $\Fc_{t-1}$-measurable ---
\emph{this is the only thing predictability of $c_t$ and $\psi_t$ is used
for} --- so \Cref{lem:onestep} gives
\[
   \E_{\Prob_1}\lB N_t\mid\Fc_{t-1}\rB
   =N_{t-1}\,e^{sc_t-\psi_t(s)}\,
     \E_{\Prob_1}\lB E_t^{-s}\mid\Fc_{t-1}\rB
   \ \le\ N_{t-1}
   \qquad\Prob_1\text{-a.s.},
\]
and taking expectations, $\E_{\Prob_1}[N_t]\le\E_{\Prob_1}[N_{t-1}]\le\dots\le
N_0=1$ by induction; in particular each $N_t$ is integrable.

\emph{(a)} By \eqref{eq:aggregate} and $s\ge0$ we have, $\Prob_1$-a.s.,
$s\sum_{n\le t}c_n-\sum_{n\le t}\psi_n(s)\ge sC_t-\Psi^{(t)}(s)$, which is
\eqref{eq:combined}; since $W_t^{-s}\ge0$ this gives
$N_t\ge W_t^{-s}\exp(sC_t-\Psi^{(t)}(s))$, and hence
\[
   \exp\lp sC_t-\Psi^{(t)}(s)\rp\,\E_{\Prob_1}\lB W_t^{-s}\rB
   \ \le\ \E_{\Prob_1}[N_t]\ \le\ 1 ,
\]
which is \eqref{eq:negmom}.

\emph{(b)} The first inequality is \Cref{lem:ordering}. For the second, fix
$s\in[0,s_{\max})$. Since $x\mapsto x^{-s}$ is nonincreasing on $[0,\infty]$,
$\{W_t\le\frac1\alpha\}\subseteq\{W_t^{-s}\ge\alpha^{s}\}$, so Markov's
inequality and \eqref{eq:negmom} give
\[
   \gamma_t(\alpha)\ \le\ \alpha^{-s}\,\E_{\Prob_1}\lB W_t^{-s}\rB
   \ \le\ \exp\lp s\ell-sC_t+\Psi^{(t)}(s)\rp
   =\exp\lp-\lC s(C_t-\ell)-\Psi^{(t)}(s)\rC\rp .
\]
Taking the infimum of the right-hand side over $s\in[0,s_{\max})$ gives
$\gamma_t(\alpha)\le\exp(-(\Psi^{(t)})^{\star}(C_t-\ell))$. If $C_t\ge\ell$ this
is \eqref{eq:master}. If $C_t<\ell$ then $\Delta_t=0$ and, since
$\Psi^{(t)}\ge0$ with $\Psi^{(t)}(0)=0$, we have
$(\Psi^{(t)})^{\star}(0)=\sup_{s}\{-\Psi^{(t)}(s)\}=0$, so the right-hand side of
\eqref{eq:master} equals $1$ and the bound holds trivially.

\emph{(c)} The map $s\mapsto s\Delta_t-\Psi^{(t)}(s)$ is strictly concave and
differentiable on $(0,s_{\max})$ with derivative
$\Delta_t-(\Psi^{(t)})'(s)$, which is strictly decreasing. If
$\Delta_t>\lim_{s\downarrow0}(\Psi^{(t)})'(s)$ the derivative is positive near
$0$, so the supremum is either attained at the unique interior zero
\eqref{eq:sstar} or approached as $s\uparrow s_{\max}$. Otherwise the derivative
is nonpositive throughout, the map is maximised at $s=0$, and its value there is
$-\Psi^{(t)}(0)=0$.

\emph{(d)} Fix $s\in(0,s_{\max})$ and let $(N_t)_{t\in\N_0}$ be the
supermartingale \eqref{eq:tilted} of Step~0.
Ville's inequality (\Cref{thm:app-ville}) gives
$\Prob_1(\sup_tN_t\ge\frac1\delta)\le\delta$. On the complementary event one has,
simultaneously for all $t$,
\[
   -s\log W_t+s\sum_{n\le t}c_n-\sum_{n\le t}\psi_n(s)\ <\ \log\tfrac1\delta ,
\]
that is $\log W_t>\sum_{n\le t}c_n-\frac1s(\sum_{n\le t}\psi_n(s)
+\log\frac1\delta)$; \eqref{eq:combined} then gives \eqref{eq:unif}, the
countably many almost-sure inequalities being intersected.

\emph{(e)} On the same event, $\log W_t>\ell$ --- that is, $W_t>\frac1\alpha$
and hence $\tau_\alpha\le t$ --- as soon as
$C_t-\frac1s(\Psi^{(t)}(s)+\log\frac1\delta)\ge\ell$, which rearranges to the
defining inequality of $T_{s,\delta}$ in \eqref{eq:stop}. \qed

\subsection{The product form}

\noindent\emph{Proof of \Cref{cor:iid}.}

Immediate from \Cref{thm:master} with $C_t=tc$ and $\Psi^{(t)}=t\psi$, together
with the scaling identity
\[
   (t\psi)^{\star}(x)=\sup_{s\ge0}\lC sx-t\psi(s)\rC
   =t\,\sup_{s\ge0}\lC s\tfrac{x}{t}-\psi(s)\rC=t\,\psi^{\star}\!\lp\tfrac xt\rp ,
\]
valid for every $t>0$. The characterisation of $s^{\ast}$ is
\Cref{thm:master}(c). \qed

\subsection{Duality}

\noindent\emph{Proof of \Cref{prp:duality}.}
Write $X:=\log E$, which is real-valued $\Prob_1$-a.s., $\mu:=\E_{\Prob_1}[X]
\in(-\infty,+\infty]$, $x_0:=\Prob_1\text{-}\operatorname{ess\,inf}X
\in[-\infty,\infty)$, and $g(s):=-sc-\Lambda(s)$, so that
$\mathcal J(c)=\sup_{s\ge0}g(s)$.

\emph{Step 1: weak duality, $\mathcal J(c)\le\mathcal D(c)$.}
Let $Q\in\mathcal A_c$ with $\KL(Q\|\Prob_1)<\infty$ and let $s\ge0$; we may
assume $\Lambda(s)<\infty$, the inequality below being vacuous otherwise. The
Donsker--Varadhan formula (\Cref{thm:app-dv}) applied to $\zeta:=-sX$ gives
\begin{equation}\label{eq:dv-step}
   -s\,\E_Q[X]\ \le\ \KL(Q\|\Prob_1)+\Lambda(s) ,
\end{equation}
and $\E_Q[X]\le c$ with $s\ge0$ gives $-sc\le-s\E_Q[X]$, so
$g(s)\le\KL(Q\|\Prob_1)$. Take the supremum over $s$ and then the infimum over
$Q$; if $\mathcal A_c$ contains no $Q$ of finite relative entropy the right-hand
side is $+\infty$ and there is nothing to prove.

\emph{Step 2: the tilted family.}
By H\"older's inequality $\Lambda$ is convex on $[0,\infty)$, with
$\Lambda(0)=0$; it is finite on $[0,s_{\max})$ and differentiable there, with
$\Lambda'(s)=-m(s)$. Hence $m$ is nonincreasing and continuous on
$(0,s_{\max})$ with $m(0^{+})=\mu$, and $g$ is concave with $g(0)=0$ and
$g'(s)=m(s)-c$. Whenever $\Lambda(s)<\infty$,
\begin{equation}\label{eq:tiltKL}
   \KL(Q_s\|\Prob_1)=\E_{Q_s}\lB-sX-\Lambda(s)\rB=-s\,m(s)-\Lambda(s) .
\end{equation}

\emph{Step 3: solvability of the tilt equation implies everything.}
Suppose $m(s^{\ast})=c$ for some $s^{\ast}$ with $\Lambda(s^{\ast})<\infty$.
Then $Q_{s^{\ast}}\in\mathcal A_c$ and, by \eqref{eq:tiltKL},
\[
   \KL(Q_{s^{\ast}}\|\Prob_1)=-s^{\ast}c-\Lambda(s^{\ast})=g(s^{\ast})
   \ \le\ \mathcal J(c)\ \le\ \mathcal D(c)
   \ \le\ \KL(Q_{s^{\ast}}\|\Prob_1) ,
\]
so all four coincide, which is \eqref{eq:duality} and
\eqref{eq:duality-attained}. The minimiser is unique because $\mathcal A_c$ is
convex and $Q\mapsto\KL(Q\|\Prob_1)=\int\varphi(\frac{dQ}{d\Prob_1})d\Prob_1$
is strictly convex on $\{\KL<\infty\}$, $\varphi(u)=u\log u$ being strictly
convex.

\emph{Step 4: the boundary cases (a) and (b).}
If $c\ge\mu$ then $\Prob_1\in\mathcal A_c$, so $\mathcal D(c)=0$; and $g$ is
concave with $g(0)=0$ and $g'(0^{+})=\mu-c\le0$, so $\mathcal J(c)=0$. If
$c<x_0$ then $X\ge x_0>c$ holds $Q$-a.s.\ for every $Q\ll\Prob_1$, so
$\mathcal A_c=\emptyset$ and $\mathcal D(c)=+\infty$; and
$\Lambda(s)\le-sx_0$ for every $s\ge0$ gives $g(s)\ge s(x_0-c)\to+\infty$.

\emph{Step 5: the case $c=x_0>-\infty$.}
Here $\E_{\Prob_1}[e^{-s(X-x_0)}]\downarrow\Prob_1(X=x_0)$ as $s\to\infty$, by
dominated convergence with dominating function $1$, so
$g(s)=-\log\E_{\Prob_1}[e^{-s(X-x_0)}]$ increases to
$\log\frac1{\Prob_1(X=x_0)}=\mathcal J(c)$. If $\Prob_1(X=x_0)>0$ then
$Q:=\Prob_1(\,\cdot\mid X=x_0)$ lies in $\mathcal A_c$ with
$\KL(Q\|\Prob_1)=\log\frac1{\Prob_1(X=x_0)}$, so
$\mathcal D(c)\le\mathcal J(c)\le\mathcal D(c)$ and the infimum is attained; if
$\Prob_1(X=x_0)=0$ then $\mathcal A_c=\emptyset$ and both sides are $+\infty$.

\emph{Step 6: the case $x_0<c<\mu$ with $s_{\max}=0$.}
Here $\Lambda(s)=+\infty$ for every $s>0$, so
$\mathcal J(c)=g(0)=0$ and it remains to show $\mathcal D(c)=0$; the infimum is
not attained, since $\KL(Q\|\Prob_1)=0$ forces $Q=\Prob_1\notin\mathcal A_c$.
As in Step~7 below, $\Lambda(s)=+\infty$ for every $s>0$ gives
$M_j\uparrow\infty$ with $\log\frac1{\Prob_1(X\le-M_j)}\le\delta M_j$, for each
fixed $\delta>0$. Fix $K$ large enough that
$m_K:=\E_{\Prob_1}[X\mid X\le K]$ satisfies $c<m_K<\infty$, which is possible
because $\E_{\Prob_1}[X^{-}]<\infty$ and $m_K\uparrow\mu>c$ as $K\uparrow
\infty$. With $P^{K}:=\Prob_1(\,\cdot\mid X\le K)$,
$R_j:=\Prob_1(\,\cdot\mid X\le-M_j)$ and
$\theta_j:=\frac{m_K-c}{m_K+M_j}$, the mixture
$Q_j:=(1-\theta_j)P^{K}+\theta_jR_j$ satisfies $\E_{Q_j}[X]\le
(1-\theta_j)m_K-\theta_jM_j=c$, hence $Q_j\in\mathcal A_c$, and convexity of
$\KL(\,\cdot\,\|\Prob_1)$ gives
\[
   \KL(Q_j\|\Prob_1)\ \le\ \log\frac1{\Prob_1(X\le K)}
      +\theta_j\,\delta M_j
   \ \xrightarrow[\ j\to\infty\ ]{}\
   \log\frac1{\Prob_1(X\le K)}+\delta\lp m_K-c\rp .
\]
Letting $\delta\downarrow0$ and then $K\uparrow\infty$ gives
$\mathcal D(c)=0$.

\emph{Step 7: the main case $x_0<c<\mu$ with $s_{\max}>0$.}
Here $g$ is finite on a neighbourhood of $0$ with $g'(0^{+})=\mu-c>0$, so
$\mathcal J(c)>0$. If $m(s)\le c$ for some
$s\in(0,s_{\max})$ then, $m$ being continuous with $m(0^{+})=\mu>c$, the
intermediate value theorem supplies $s^{\ast}$ with $m(s^{\ast})=c$ and Step~3
applies. So assume
\begin{equation}\label{eq:noroot}
   m(s)>c\qquad\text{for every }s\in(0,s_{\max}) ,
\end{equation}
whence $g$ is strictly increasing on $[0,s_{\max})$ and
$\mathcal J(c)=\lim_{s\uparrow s_{\max}}g(s)$. We show first that this forces
$s_{\max}<\infty$ and $\Lambda(s_{\max})<\infty$.

If $s_{\max}=\infty$, fix $c'\in(x_0,c)$, so that $\Prob_1(X\le c')>0$. Then
$\Lambda(s)\ge\log\lp e^{-sc'}\Prob_1(X\le c')\rp$, hence
$g(s)\le-s(c-c')-\log\Prob_1(X\le c')\to-\infty$, contradicting that $g$
increases from $g(0)=0$. If $s_{\max}<\infty$ and $\Lambda(s_{\max})=\infty$
then $\Lambda(s)\uparrow\infty$ as $s\uparrow s_{\max}$ by monotone convergence,
while $-sc$ stays bounded, so again $g(s)\to-\infty$; the same contradiction.
Note also that $s_{\max}<\infty$ forces $x_0=-\infty$, since $x_0>-\infty$ gives
$\Lambda(s)\le-sx_0<\infty$ for every $s$.

So $s_{\max}<\infty$ and $\Lambda(s_{\max})<\infty$. By \eqref{eq:noroot} and
monotonicity, $m(s_{\max}^{-})\ge c>-\infty$; finiteness of that limit means
$\E_{\Prob_1}[|X|e^{-s_{\max}X}]<\infty$, so dominated convergence gives
$m(s_{\max})=m(s_{\max}^{-})\ge c$ and, $\Lambda$ being continuous from the
left at $s_{\max}$ by monotone convergence,
\begin{equation}\label{eq:Jboundary}
   \mathcal J(c)=g(s_{\max})=-s_{\max}c-\Lambda(s_{\max}) .
\end{equation}

\emph{Step 8: the mixing construction.}
It remains to show $\mathcal D(c)\le\mathcal J(c)$ in the situation of Step~7.
Since $\Lambda(s)=+\infty$ for $s>s_{\max}$, for every $\delta>0$ there is no
$M_0$ with $\Prob_1(X\le-M)\le e^{-(s_{\max}+\delta)M}$ for all $M\ge M_0$ ---
such a bound would make $\E_{\Prob_1}[e^{-sX}]$ finite for
$s<s_{\max}+\delta$ by the layer-cake formula. Hence there are $M_j\uparrow
\infty$ with
\begin{equation}\label{eq:tailrate}
   \log\frac1{\Prob_1(X\le-M_j)}\ \le\ (s_{\max}+\delta)M_j ,
\end{equation}
and in particular $\Prob_1(X\le-M_j)>0$, which is consistent with
$x_0=-\infty$. Put $R_j:=\Prob_1(\,\cdot\mid X\le-M_j)$ and
$m_j:=\E_{R_j}[X]\in[-\infty,-M_j]$.

If $m_j=-\infty$ for some $j$, then $Q^{(\theta)}:=(1-\theta)Q_{s_{\max}}
+\theta R_j$ lies in $\mathcal A_c$ for every $\theta\in(0,1]$, and letting
$\theta\downarrow0$ gives, by convexity of $\KL(\,\cdot\,\|\Prob_1)$,
$\mathcal D(c)\le\KL(Q_{s_{\max}}\|\Prob_1)=-s_{\max}m(s_{\max})-\Lambda(s_{\max})
\le\mathcal J(c)$ by \eqref{eq:Jboundary} and $m(s_{\max})\ge c$.

Otherwise all $m_j$ are finite. Put
\[
   \theta_j:=\frac{m(s_{\max})-c}{m(s_{\max})-m_j}\in(0,1] ,
   \qquad
   Q_j:=(1-\theta_j)Q_{s_{\max}}+\theta_jR_j ,
\]
so that $\E_{Q_j}[X]=(1-\theta_j)m(s_{\max})+\theta_jm_j=c$ and
$Q_j\in\mathcal A_c$. Convexity of $\KL(\,\cdot\,\|\Prob_1)$,
\eqref{eq:tiltKL}, \eqref{eq:tailrate} and $m_j\le-M_j$ give
\begin{align*}
   \KL(Q_j\|\Prob_1)
   &\le\ \KL(Q_{s_{\max}}\|\Prob_1)+\theta_j\KL(R_j\|\Prob_1)\\
   &\le\ -s_{\max}m(s_{\max})-\Lambda(s_{\max})
      +\frac{\lp m(s_{\max})-c\rp(s_{\max}+\delta)M_j}{m(s_{\max})+M_j} .
\end{align*}
The last fraction tends to $(m(s_{\max})-c)(s_{\max}+\delta)$ as $j\to\infty$,
so
\[
   \mathcal D(c)\ \le\ -s_{\max}m(s_{\max})-\Lambda(s_{\max})
      +\lp m(s_{\max})-c\rp(s_{\max}+\delta)
   =\mathcal J(c)+\delta\lp m(s_{\max})-c\rp ,
\]
using \eqref{eq:Jboundary}. Letting $\delta\downarrow0$ gives
$\mathcal D(c)\le\mathcal J(c)$ and hence \eqref{eq:duality}.

\emph{Step 9: non-attainment in the situation of Step~7.}
Suppose $m(s_{\max})>c$ and some $Q\in\mathcal A_c$ had
$\KL(Q\|\Prob_1)=\mathcal J(c)$. Running \eqref{eq:dv-step} at $s=s_{\max}$,
\[
   \mathcal J(c)=-s_{\max}c-\Lambda(s_{\max})
   \ \le\ -s_{\max}\E_Q[X]-\Lambda(s_{\max})
   \ \le\ \KL(Q\|\Prob_1)=\mathcal J(c) ,
\]
so both inequalities are equalities. The second is the equality case
\eqref{eq:dv-gap} of \Cref{thm:app-dv}, which forces $Q=Q_{s_{\max}}$; the first forces
$\E_Q[X]=c$, that is $m(s_{\max})=c$, a contradiction --- note $s_{\max}>0$ is
what makes the first inequality informative. Together with Step~3 this proves
the attainment criterion in (e); Steps~4, 5, 6 and~8 prove (a), (b), (c), (d)
and the identity \eqref{eq:duality} in every case. \qed

\subsection{Two elementary monotonicities}
\label{app:elem}

\begin{lemma}[Monotonicity of the Bennett quotient]\label{lem:elem}
The map
\[
   g(z):=\frac{e^{z}-z-1}{z^{2}}\quad(z\ne0),
   \qquad g(0):=\tfrac12 ,
\]
is continuous, positive and strictly increasing on $\R$. Consequently, for
$s\ge0$ and $y\le b$, $\ \frac{e^{sy}-sy-1}{y^{2}}\le\frac{e^{sb}-sb-1}{b^{2}}$.
\end{lemma}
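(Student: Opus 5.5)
The plan is to use the power series of the exponential, which makes positivity, continuity and monotonicity transparent at once. Since $e^{z}-z-1=\sum_{k\ge2}z^{k}/k!$, we have, for every $z\in\R$ (including $z=0$, where the series gives $\frac12$),
\[
   g(z)=\sum_{k\ge2}\frac{z^{k-2}}{k!}=\sum_{j\ge0}\frac{z^{j}}{(j+2)!} ,
\]
an entire function. Continuity follows at once. Positivity for $z\ne0$ follows from $e^{z}>1+z$, the strict tangent-line inequality for the strictly convex exponential, and $g(0)=\frac12>0$.

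For strict monotonicity the series is not termwise monotone for $z<0$, so I will use an integral representation instead. Taylor's formula with integral remainder gives $e^{z}-z-1=z^{2}\int_0^1(1-u)e^{uz}\,du$, hence
\[
   g(z)=\int_0^1(1-u)\,e^{uz}\,du\qquad(z\in\R),
\]
valid also at $z=0$. Differentiating under the integral sign, which is justified by local uniform boundedness, gives $g'(z)=\int_0^1u(1-u)e^{uz}\,du>0$. So $g$ is strictly increasing on all of $\R$. This representation also reproves continuity and positivity, so I would lead with it.

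For the consequence, fix $s\ge0$ and $y\le b$; recall that $b>0$ in the paper's use. If $s=0$ both sides vanish. Otherwise
\[
   \frac{e^{sy}-sy-1}{y^{2}}=s^{2}g(sy)\ \le\ s^{2}g(sb)=\frac{e^{sb}-sb-1}{b^{2}} ,
\]
since $sy\le sb$ and $g$ is nondecreasing. The case $y=0$ is read by continuity as $s^{2}/2$, that is, with $g(0)=\frac12$.

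The only mildly delicate point is handling $z\le0$ and $y=0$ uniformly, and the integral representation disposes of it. I do not expect any real obstacle here.
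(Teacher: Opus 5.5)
Your proof is correct, but it takes a different route from the paper's. The paper differentiates the quotient directly. It writes $g'(z)=v(z)/z^{3}$ with $v(z)=ze^{z}-2e^{z}+z+2$, and uses $v'(z)=(z-1)e^{z}+1$ and $v''(z)=ze^{z}$ to show that $v$ is strictly increasing with $v(0)=0$. Hence $v(z)$ and $z^{3}$ have the same sign, so $g'>0$ away from the origin. Continuity and positivity are read off the power series, and the scaled inequality is left implicit.

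Your Taylor-remainder representation $g(z)=\int_0^1(1-u)e^{uz}\,du$ gives all three properties at once, uniformly in $z$. It needs no auxiliary function and no separate treatment of $z=0$. The paper's derivative formula is valid only for $z\neq0$, so its strict monotonicity across the origin quietly rests on continuity. Your representation also gives more for free, e.g.\ $g''(z)=\int_0^1u^{2}(1-u)e^{uz}\,du>0$, so $g$ is convex. It also makes positivity for $z<0$ airtight: there the series terms can alternate in sign, so the series alone is not conclusive, and one needs something like your $e^{z}>1+z$.

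The paper's route is a mechanical sign analysis that avoids differentiating under the integral. Yours is shorter and more structural. Finally, your explicit reduction $\frac{e^{sy}-sy-1}{y^{2}}=s^{2}g(sy)$ for $s>0$, $y\neq0$, with the cases $s=0$ and $y=0$ read through $g(0)=\frac12$, correctly supplies the step the paper leaves to the reader.
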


\noindent\emph{Proof.}
Continuity and positivity follow from the power series
$g(z)=\sum_{j\ge0}z^{j}/(j+2)!$. For $z\ne0$,
\[
   g'(z)=\frac{z^{2}e^{z}-2z\lp e^{z}-z-1\rp}{z^{4}}=\frac{v(z)}{z^{3}},
   \qquad v(z):=ze^{z}-2e^{z}+z+2 .
\]
Now $v(0)=0$, $v'(z)=(z-1)e^{z}+1$ and $v''(z)=ze^{z}$, so $v'$ is decreasing on
$(-\infty,0)$ and increasing on $(0,\infty)$, whence $v'\ge v'(0)=0$ with
equality only at $z=0$; thus $v$ is strictly increasing with $v(0)=0$, so $v(z)$
and $z^{3}$ have the same sign and $g'>0$ on $\R\setminus\{0\}$. \qed

\begin{lemma}[The constant $c_p$]\label{lem:cp}
For $p\in(1,2]$ put $c_p:=\sup_{v>0}\frac{e^{-v}-1+v}{v^{p}}$. Then
$c_p\le2^{1-p}<\infty$, and $c_2=\frac12$.
\end{lemma}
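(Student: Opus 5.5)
Set $f(v):=\frac{e^{-v}-1+v}{v^{p}}$ for $v>0$. The plan is to bound the numerator of $f$ in two regimes, match the bounds at the crossover $v=2$, and treat $p=2$ separately.

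\emph{Numerator bounds.} Put $u(v):=e^{-v}-1+v$. We have $u(0)=0$, $u'(v)=1-e^{-v}\in[0,v]$, and $u''(v)=e^{-v}\le1$. Integrating twice gives $u(v)\le\frac{v^{2}}{2}$ for all $v\ge0$. Dropping $e^{-v}-1\le0$ gives $u(v)\le v$.

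\emph{The case $p\in(1,2]$.} On $(0,2]$ we use the quadratic bound:
\[
   f(v)\ \le\ \tfrac12 v^{2-p}\ \le\ \tfrac12\,2^{2-p}\ =\ 2^{1-p},
\]
since $2-p\ge0$ makes $v^{2-p}$ nondecreasing. On $[2,\infty)$ we use the linear bound:
\[
   f(v)\ \le\ v^{1-p}\ \le\ 2^{1-p},
\]
since $1-p<0$ makes $v^{1-p}$ nonincreasing. The two bounds agree at $v=2$, so $c_p\le 2^{1-p}<\infty$. Positivity of $u$ on $(0,\infty)$ shows that $c_p$ is a genuine supremum of positive numbers, and finiteness is exactly the bound just proved.

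\emph{The case $p=2$.} The bound above already gives $c_2\le\frac12$. For the reverse inequality, expand $u(v)=\frac{v^{2}}{2}-\frac{v^{3}}{6}+O(v^{4})$, so $f(v)\to\frac12$ as $v\downarrow0$, and hence $c_2\ge\frac12$. For $p=2$ the supremum is therefore approached at the origin and not attained. Strictly speaking $u(v)<\frac{v^{2}}{2}$ for $v>0$, because $u''<1$ there.

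The only point needing care is choosing the crossover $v=2$ so that both regimes give the same constant $2^{1-p}$. Everything else is elementary, and no earlier result of the paper is needed.
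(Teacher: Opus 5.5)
Your proof is correct and follows the paper's argument essentially verbatim: the same two numerator bounds $e^{-v}-1+v\le\min\{\frac{v^{2}}{2},v\}$, the same split at $v=2$, and the same limit $v\downarrow0$ giving $c_2=\frac12$. The only cosmetic difference is that you obtain the quadratic bound by integrating $u''\le1$ twice, where the paper quotes $e^{-v}\le1-v+\frac{v^{2}}{2}$ directly.
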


\noindent\emph{Proof.}
Since $e^{-v}\le1-v+\frac{v^{2}}{2}$ and $e^{-v}\le1$ for $v\ge0$, we have
$e^{-v}-1+v\le\min\{\frac{v^{2}}{2},v\}$. For $v\le2$ this gives
$\frac{e^{-v}-1+v}{v^{p}}\le\frac{v^{2-p}}{2}\le\frac{2^{2-p}}{2}=2^{1-p}$, and
for $v\ge2$ it gives $\frac{e^{-v}-1+v}{v^{p}}\le v^{1-p}\le2^{1-p}$, using
$1-p<0$. For $p=2$ the first bound is $\frac12$ for every $v$ and is approached
as $v\downarrow0$. \qed

\subsection{Rung 1}
\label{app:rung1}

\noindent\emph{Proof of \Cref{cor:rung1}.}

\emph{Nonnegativity of $\kappa_n$.} Conditional Jensen for the convex map
$x\mapsto e^{-s_0x}$ and \eqref{eq:anchor} give
\[
   \E_{\Prob_1}\lB E_n^{-s_0}\mid\Fc_{n-1}\rB
   \ \ge\ \exp\lp-s_0\,\E_{\Prob_1}\lB L_n\mid\Fc_{n-1}\rB\rp
   \ \ge\ e^{-s_0c_n}
   \qquad\Prob_1\text{-a.s.,}
\]
so $\rho_n\ge e^{-s_0c_n}$ and $\kappa_n=\log\rho_n+s_0c_n\ge0$.

\emph{The majorant.} Evaluating \eqref{eq:cgf} at $s_0$,
\[
   \varphi_n(s_0)
   =s_0c_n+\log\E_{\Prob_1}\lB E_n^{-s_0}\mid\Fc_{n-1}\rB
   \ \le\ s_0c_n+\log\rho_n=\kappa_n .
\]
For fixed $\omega$ the map $s\mapsto\varphi_n(s)$ is convex with
$\varphi_n(0)=0$, so it lies below the chord joining $(0,0)$ to
$(s_0,\varphi_n(s_0))$ on $[0,s_0]$:
\[
   \varphi_n(s)\ \le\ \frac{s}{s_0}\,\varphi_n(s_0)\ \le\ \frac{s}{s_0}\kappa_n
   \qquad(0\le s\le s_0) .
\]
Beyond $s_0$ the value $+\infty$ majorises trivially, so
\eqref{eq:maj-rung1} is a majorant with $s_{\max}=\infty$, and
$\Psi^{(t)}(s)=\frac{s}{s_0}K_t$ on $[0,s_0]$, $+\infty$ beyond, dominates
$\sum_{n\le t}\psi_n$.

\emph{The transform.} For $x\ge0$,
\[
   \lp\Psi^{(t)}\rp^{\star}(x)
   =\sup_{0\le s\le s_0}\lC sx-\frac{s}{s_0}K_t\rC
   =s_0\lp x-\frac{K_t}{s_0}\rp_+
   =\lp s_0x-K_t\rp_+ ,
\]
the supremum of a linear function of $s$, attained at $s=s_0$ if the slope
$x-K_t/s_0$ is positive and at $s=0$ otherwise. \Cref{thm:master}(b) now gives
\eqref{eq:bound-rung1}.

\emph{The transparent form.} Taking $C_t=\sum_{n\le t}c_n$ and
$K_t=\sum_{n\le t}\kappa_n$ in \eqref{eq:negmom} at $s=s_0$,
\[
   \E_{\Prob_1}\lB W_t^{-s_0}\rB
   \ \le\ \exp\lp-s_0C_t+K_t\rp
   =\exp\lp\sum_{n\le t}\log\rho_n\rp=\prod_{n\le t}\rho_n ,
\]
and Markov's inequality as in the proof of \Cref{thm:master}(b) gives
$\gamma_t(\alpha)\le\alpha^{-s_0}\prod_{n\le t}\rho_n$. \qed

\begin{remark}[The Jensen floor]\label{rem:jensenfloor}
The computation above says more than $\kappa_n\ge0$: it says
$\log\frac1{\rho_n}\le s_0c_n$, so no choice of $s_0$ extracts a per-step rate
exceeding $s_0$ times the certified \epower{}, and the rate \emph{per unit of
tilt} degrades as $s_0$ grows. That is the trade-off \eqref{eq:crit} records: a
large $s_0$ makes the threshold term $s_0\ell$ expensive, a small one makes the
rate $\log\frac1{\rho_n}$ small. It also shows that the condition $\rho_n<1$
means more than positive \epower{}: it says the \epower{} survives the
pessimistic reweighting by $E_n^{-s_0}$. Under $\Prob_0$ the analogous condition
can never hold, since $\E_{\Prob_0}[E_n\mid\Fc_{n-1}]\le1$ and Jensen give
$\E_{\Prob_0}[E_n^{-s}\mid\Fc_{n-1}]\ge1$; the asymmetry between the two
measures is the whole content of the hypothesis.
\end{remark}

\subsection{\texorpdfstring{Rungs 2 and $2''$}{Rungs 2 and 2''}}
\label{app:rung2}

\begin{proposition}[Equivalent forms of Cram\'er's condition]
\label{prp:cramer-equiv}
Let $E$ be a nonnegative random variable on $(\Omega,\Fc,\Prob_1)$. The
following are equivalent, and the optimal exponent in \ref{it:c1} equals
$s_{\max}:=\sup\lC s\ge0\st\E_{\Prob_1}[E^{-s}]<\infty\rC$:
\begin{enumerate}[label=(\roman*),leftmargin=2.4em]
\item\label{it:c1} there are $K'<\infty$ and $a>0$ with
      $\Prob_1(E\le\varepsilon)\le K'\varepsilon^{a}$ for all $\varepsilon>0$;
\item\label{it:c2} there are $K<\infty$, $a>0$ and $u\in\R$ with
      $\Prob_1(\log E\le u-x)\le Ke^{-ax}$ for all $x\ge0$;
\item\label{it:c3} $\E_{\Prob_1}[E^{-s_0}]<\infty$ for some $s_0>0$.
\end{enumerate}
\end{proposition}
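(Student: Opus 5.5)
I would prove the cycle \ref{it:c1}$\Leftrightarrow$\ref{it:c2}, \ref{it:c1}$\Rightarrow$\ref{it:c3}, \ref{it:c3}$\Rightarrow$\ref{it:c1}, and then read off the statement about the optimal exponent from the quantitative form of the two directions between \ref{it:c1} and \ref{it:c3}.

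The equivalence \ref{it:c1}$\Leftrightarrow$\ref{it:c2} is a change of variables, $\varepsilon=e^{u-x}$. Given \ref{it:c1}, take $u=0$: then $\Prob_1(\log E\le -x)=\Prob_1(E\le e^{-x})\le K'e^{-ax}$, which is \ref{it:c2} with $K=K'$. Conversely, given \ref{it:c2}, for $\varepsilon\le e^{u}$ write $\varepsilon=e^{u-x}$ with $x\ge0$; this gives $\Prob_1(E\le\varepsilon)\le Ke^{-au}\varepsilon^{a}$. For $\varepsilon>e^{u}$ use the trivial bound $1\le e^{-au}\varepsilon^{a}$. So $K'=\max\{K,1\}e^{-au}$ works, with the same exponent $a$.

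For \ref{it:c1}$\Rightarrow$\ref{it:c3} I would use the layer-cake formula. Fix any $s\in(0,a)$. Then
\[
\E_{\Prob_1}[E^{-s}]=\int_0^\infty\Prob_1(E^{-s}>y)\,dy\le1+\int_1^\infty\Prob_1(E<y^{-1/s})\,dy\le1+K'\int_1^\infty y^{-a/s}\,dy,
\]
and the last integral is finite because $a/s>1$. The atom $\{E=0\}$ is controlled as well: its probability is at most $K'\varepsilon^a$ for every $\varepsilon>0$, hence zero, consistent with the convention $0^{-s}=\infty$. This shows $s_{\max}\ge a$ for every admissible $a$, since the argument works for every $s<a$.

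For \ref{it:c3}$\Rightarrow$\ref{it:c1}, Markov's inequality gives
\[
\Prob_1(E\le\varepsilon)=\Prob_1(E^{-s_0}\ge\varepsilon^{-s_0})\le\E_{\Prob_1}[E^{-s_0}]\,\varepsilon^{s_0}.
\]
So \ref{it:c1} holds with $a=s_0$. Hence any $s_0$ with a finite moment is an admissible exponent, and every $s<s_{\max}$ is admissible.

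Combining the two directions, the set of admissible exponents $a$ lies between $[0,s_{\max})$ and $[0,s_{\max}]$, so the optimal (supremal) exponent is exactly $s_{\max}$. The only delicate point is the endpoint: whether $s_{\max}$ itself is attained. That depends on the example and is not claimed in the statement. I would remark that ``optimal'' here means the supremum, and that the tail-to-moment step loses exactly this endpoint. The integrability bookkeeping at the atom $E=0$ and at $E=\infty$ (where $E^{-s}=0$) is routine.
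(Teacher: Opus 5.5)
Your proof is correct and follows the paper's argument step for step. The paper uses the substitution $\varepsilon=e^{u-x}$ for (i)$\Leftrightarrow$(ii), Markov's inequality on $E^{-s_0}$ for (iii)$\Rightarrow$(i), the layer-cake bound $1+K'\int_1^\infty y^{-a/s}\,dy$ for (i)$\Rightarrow$(iii), and the observation that these two quantitative directions give the same supremum $s_{\max}$. Your separate treatment of $\varepsilon>e^{u}$ in (ii)$\Rightarrow$(i), with $K'=\max\{K,1\}e^{-au}$, is a genuine improvement: the paper's bare substitution, which gives $K'=Ke^{-au}$, does not cover that range when $K<1$.
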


\noindent\emph{Proof.}
\ref{it:c1}$\Leftrightarrow$\ref{it:c2} is the substitution
$\varepsilon=e^{u-x}$, with $K=K'e^{au}$.
\ref{it:c3}$\Rightarrow$\ref{it:c1}: Markov's inequality applied to $E^{-s_0}$
gives $\Prob_1(E\le\varepsilon)=\Prob_1(E^{-s_0}\ge\varepsilon^{-s_0})\le
\varepsilon^{s_0}\E_{\Prob_1}[E^{-s_0}]$, so \ref{it:c1} holds with $a=s_0$.
\ref{it:c1}$\Rightarrow$\ref{it:c3}: for $0<s<a$, by the layer-cake formula,
\[
   \E_{\Prob_1}\lB E^{-s}\rB
   =\int_0^{\infty}\Prob_1\lp E<u^{-1/s}\rp du
   \ \le\ 1+\int_1^{\infty}K'u^{-a/s}du
   =1+\frac{K's}{a-s}\ <\ \infty .
\]
The same two displays show that the set of admissible $a$ in \ref{it:c1} and the
set of $s$ with $\E_{\Prob_1}[E^{-s}]<\infty$ have the same supremum. \qed

\begin{lemma}[Majorant for rung 2]\label{lem:maj-crash}
Under \Cref{hyp:crash}, $\Prob_1$-a.s.\ for every $n\in\N$ and $s\in[0,a)$,
\[
   \varphi_n(s)\ \le\ c_p\,M_{p,n}\,s^{p}+\frac{K_n\,s^{2}}{a(a-s)} .
\]
\end{lemma}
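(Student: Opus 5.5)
The plan is to write $Y:=L_n-c_n$ and work conditionally on $\Fc_{n-1}$ throughout, via a regular conditional distribution, so that every step below is an inequality between $\Fc_{n-1}$-measurable quantities holding $\Prob_1$-a.s. Put $f(v):=e^{-v}-1+v\ge0$. Then
\[
   e^{-sY}=1-sY+f(sY),
\]
and taking conditional expectations gives
\[
   \E_{\Prob_1}\lB e^{-sY}\mid\Fc_{n-1}\rB
   =1-s\,\E_{\Prob_1}\lB Y\mid\Fc_{n-1}\rB+\E_{\Prob_1}\lB f(sY)\mid\Fc_{n-1}\rB .
\]
The anchor \eqref{eq:anchor} gives $\E_{\Prob_1}[Y\mid\Fc_{n-1}]\ge0$, and $s\ge0$, so the middle term can be dropped. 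Combining this with $\log x\le x-1$ yields
\[
   \varphi_n(s)\ \le\ \E_{\Prob_1}\lB f(sY)\mid\Fc_{n-1}\rB .
\]
One bookkeeping point must be checked here: $\E[Y\mid\Fc_{n-1}]$ has to be well defined for the split. I would argue that \eqref{eq:h2} makes $Y^-$ conditionally integrable, and \eqref{eq:h2b} with $p>1$ makes $Y^+$ conditionally integrable, so the split is legitimate. If the left side happens to be infinite, the bound we are about to prove shows it is not.

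Next I split $f(sY)$ on the events $\{Y\ge0\}$ and $\{Y<0\}$.

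\emph{Winning side.} By the definition of $c_p$ in \Cref{lem:cp}, $f(v)\le c_pv^{p}$ for $v\ge0$. Hence
\[
   \E_{\Prob_1}\lB f(sY)\mathbf 1\{Y\ge0\}\mid\Fc_{n-1}\rB
   \ \le\ c_ps^{p}\,\E_{\Prob_1}\lB (Y_+)^{p}\mid\Fc_{n-1}\rB
   \ \le\ c_pM_{p,n}s^{p}
\]
by \eqref{eq:h2b}.

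\emph{Crash side.} Put $X:=(-Y)_+=(c_n-L_n)_+$ and $g(x):=e^{sx}-1-sx$, so that $f(sY)\mathbf 1\{Y<0\}=g(X)$. Since $g(0)=0$ and $g'(x)=s(e^{sx}-1)\ge0$, the layer-cake formula gives
\[
   \E_{\Prob_1}\lB g(X)\mid\Fc_{n-1}\rB
   =\int_0^\infty s\lp e^{sx}-1\rp\Prob_1\lp X>x\mid\Fc_{n-1}\rp dx .
\]
For every $x\ge0$ we have $\Prob_1(X>x\mid\Fc_{n-1})\le\Prob_1(L_n\le c_n-x\mid\Fc_{n-1})\le K_ne^{-ax}$ by \eqref{eq:h2}. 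Substituting this, for $s<a$,
\[
   \E_{\Prob_1}\lB g(X)\mid\Fc_{n-1}\rB
   \ \le\ sK_n\int_0^\infty\lp e^{-(a-s)x}-e^{-ax}\rp dx
   =sK_n\lp\frac1{a-s}-\frac1a\rp
   =\frac{K_ns^{2}}{a(a-s)} .
\]
Adding the winning-side and crash-side bounds gives the claimed majorant.

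The main obstacle is not the algebra but the measure-theoretic care. The conditional layer-cake identity has to hold simultaneously for all $s\in[0,a)$ on a single null-set complement. I would handle this by using a regular conditional law of $L_n$ given $\Fc_{n-1}$, which makes everything an unconditional statement for that law. Monotonicity in $s$, or continuity of both sides in $s$, then reduces "all $s\in[0,a)$" to rational $s$, which removes the quantifier issue. The tail hypothesis \eqref{eq:h2} should similarly be read for all $x$ via rational $x$ and right-continuity of the conditional distribution function.
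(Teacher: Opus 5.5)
Your proof is correct and follows the paper's argument step for step: drop the linear term using the anchor, bound the winning side by $c_p(sY_+)^p$ from the definition of $c_p$, bound the crash side by the layer-cake formula against $K_ne^{-ax}$, and finish with $\log x\le x-1$. The differences are cosmetic: you take $Y=L_n-c_n$ where the paper takes $c_n-L_n$, and your regular-conditional-distribution care goes beyond what is needed, since \Cref{def:main} allows the exceptional null set to depend on $s$.
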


\noindent\emph{Proof.}
Write $Y:=c_n-L_n$, so that $\varphi_n(s)=\log\E_{\Prob_1}[e^{sY}\mid\Fc_{n-1}]$;
the crash side is $Y_+=(c_n-L_n)_+$ and the winning side is
$Y_-=(L_n-c_n)_+$. By \eqref{eq:anchor},
$\E_{\Prob_1}[Y\mid\Fc_{n-1}]\le0$, so
\begin{align*}
   \E_{\Prob_1}\lB e^{sY}\mid\Fc_{n-1}\rB
   &=1+s\,\E_{\Prob_1}\lB Y\mid\Fc_{n-1}\rB
     +\E_{\Prob_1}\lB e^{sY}-1-sY\mid\Fc_{n-1}\rB\\
   &\le\ 1+\E_{\Prob_1}\lB e^{sY}-1-sY\mid\Fc_{n-1}\rB ,
\end{align*}
the integrand being nonnegative. Split according to the sign of $Y$.

On $\{Y\le0\}$ we have $e^{sY}-1-sY=e^{-sY_-}-1+sY_-\le c_p(sY_-)^{p}$ by the
definition of $c_p$ (\Cref{lem:cp}), so this part contributes at most
$c_ps^{p}\E_{\Prob_1}[Y_-^{p}\mid\Fc_{n-1}]\le c_ps^{p}M_{p,n}$ by
\eqref{eq:h2b}.

On $\{Y>0\}$ let $f(x):=e^{sx}-1-sx$, so $f(0)=0$ and $f'(x)=s(e^{sx}-1)\ge0$
for $x\ge0$. The layer-cake formula and \eqref{eq:h2} give, for $s<a$,
\begin{align*}
   \E_{\Prob_1}\lB f(Y)\mathbf 1_{\{Y>0\}}\mid\Fc_{n-1}\rB
   &=\int_0^{\infty}f'(x)\,\Prob_1\lp Y>x\mid\Fc_{n-1}\rp dx\\
   &\le\ K_n\int_0^{\infty}s\lp e^{sx}-1\rp e^{-ax}dx
   =\frac{K_ns^{2}}{a(a-s)} ,
\end{align*}
since $\int_0^{\infty}(e^{sx}-1)e^{-ax}dx=\frac1{a-s}-\frac1a=\frac{s}{a(a-s)}$.
Adding the two parts and using $\log(1+u)\le u$ finishes the proof. \qed

\noindent\emph{Proof of \Cref{cor:rung2}.}
\Cref{lem:maj-crash} gives the majorant; the aggregate is immediate since
$\psi_n$ depends on $n$ only through the additive constants $M_{p,n},K_n$, both
of which sum. \Cref{thm:master}(b) then gives the first inequality of
\eqref{eq:bound-rung2}, and \Cref{thm:master}(c) gives \eqref{eq:sstar-rung2-exact}: the derivative
$(\Psi^{(t)})'(s)=pc_pM_ps^{p-1}+\frac{Ks(2a-s)}{a(a-s)^{2}}$ is continuous and
strictly increasing on $(0,a)$, with limits $0$ at $s\downarrow0$ and, when
$K>0$, $+\infty$ at $s\uparrow a$, so it takes the value $\Delta_t>0$ exactly
once. (When $K=0$ and $p<2$ the derivative stays bounded and for large
$\Delta_t$ the supremum is approached as $s\uparrow a$ instead.)

For the surrogate, evaluate the supremand at $s=s_{\ast}$ of
\eqref{eq:sstar-rung2}. Since $s_{\ast}^{p-1}\le\frac{\Delta_t}{4c_pM_p}$,
\[
   c_pM_ps_{\ast}^{p}=c_pM_ps_{\ast}\cdot s_{\ast}^{p-1}
   \ \le\ \frac{s_{\ast}\Delta_t}{4} ,
\]
and since $s_{\ast}\le\frac a2$ we have $a-s_{\ast}\ge\frac a2$, so
$s_{\ast}\le\frac{a^{2}\Delta_t}{8K}$ gives
\[
   \frac{Ks_{\ast}^{2}}{a(a-s_{\ast})}\ \le\ \frac{2Ks_{\ast}^{2}}{a^{2}}
   \ \le\ \frac{2Ks_{\ast}}{a^{2}}\cdot\frac{a^{2}\Delta_t}{8K}
   =\frac{s_{\ast}\Delta_t}{4} .
\]
Hence the supremand is at least
$s_{\ast}\Delta_t-\frac{s_{\ast}\Delta_t}{4}-\frac{s_{\ast}\Delta_t}{4}
=\frac12s_{\ast}\Delta_t$. If the first entry of \eqref{eq:sstar-rung2} is the
smallest then $\frac12s_{\ast}\Delta_t
=\frac12\Delta_t(\frac{\Delta_t}{4c_pM_p})^{q-1}
=\frac{\Delta_t^{q}}{2(4c_pM_p)^{q-1}}$, using $\frac1{p-1}=q-1$ and
$1+\frac1{p-1}=q$, which is \Cref{rem:rung2-regimes}; for $p=2$ this is
$\Delta_t^{2}/(4M_2)$ by $c_2=\frac12$. \qed

\begin{proposition}[$p=1$ fails, and the exponent $q$ is right]
\label{prp:p1-fails}
Fix $a>0$, $\Delta>0$ and $c>\Delta$, and put $\ell:=c-\Delta>0$. For
$\epsilon\in(0,\frac12]$ let $\Omega=\{\omega_0,\omega_1,\omega_2\}$ and
\[
   E(\omega_0):=0,\qquad
   E(\omega_1):=e^{c-\Delta},\qquad
   E(\omega_2):=e^{c+(1-\epsilon)\Delta/\epsilon},
\]
with $\Prob_1(\omega_0)=0$, $\Prob_1(\omega_1)=1-\epsilon$,
$\Prob_1(\omega_2)=\epsilon$; choosing $\Prob_0$ with enough mass on $\omega_0$
and positive mass on $\omega_1,\omega_2$ makes $E$ an \evar{} with
$\Prob_1\ll\Prob_0$. Then $\E_{\Prob_1}[\log E]=c$, \eqref{eq:h2} holds with
$K=e^{a\Delta}$ uniformly in $\epsilon$, and, with $t=1$:
\begin{enumerate}[label=(\roman*),leftmargin=2.4em]
\item $\E_{\Prob_1}[(\log E-c)_+]=(1-\epsilon)\Delta\le\Delta$, while
      $\gamma_1(\alpha)=1-\epsilon\to1$ as $\epsilon\downarrow0$. Hence no bound
      depending only on $(a,K,M_1,\Delta)$ and positive for $\Delta>0$ can hold:
      with $p=1$ the hypothesis is insufficient.
\item For $p\in(1,2]$, $M_p=\epsilon^{1-p}(1-\epsilon)^{p}\Delta^{p}$, so that
      $\frac{\Delta^{q}}{M_p^{q-1}}=\epsilon(1-\epsilon)^{-q}$, while the exact
      exponent is $-\log\gamma_1(\alpha)=\log\frac1{1-\epsilon}$. The ratio of
      the two tends to $1$ as $\epsilon\downarrow0$: on this family the
      guaranteed exponent $\Delta^{q}/(2(4c_pM_p)^{q-1})$ has the exact order in
      $M_p$, so the exponent $q=\frac{p}{p-1}$ cannot be improved and only the
      constant is at issue.
\end{enumerate}
\end{proposition}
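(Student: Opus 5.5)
The plan is a direct computation on the three-point space, carried out in the order the statement lists its claims. Under $\Prob_1$ only $\omega_1,\omega_2$ carry mass, so $\log E$ takes the values $c-\Delta$ and $c+(1-\epsilon)\Delta/\epsilon$ with probabilities $1-\epsilon$ and $\epsilon$. The mean is then $c-(1-\epsilon)\Delta+(1-\epsilon)\Delta=c$, so the drift certificate holds with equality.

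For the \evar{} property, put $\Prob_0(\omega_i)=q_i$ with $q_1,q_2>0$ small enough that $q_1e^{c-\Delta}+q_2e^{c+(1-\epsilon)\Delta/\epsilon}\le1$, and give the rest of the mass to $\omega_0$. This yields $\E_{\Prob_0}[E]\le1$ and $\Prob_1\ll\Prob_0$.

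The crash tail \eqref{eq:h2} only concerns $L-c=-\Delta$, which has probability $1-\epsilon\le1$. For $x\le\Delta$ we need $1\le Ke^{-ax}$, and $K=e^{a\Delta}$ gives $Ke^{-ax}\ge1$ there. For $x>\Delta$ the probability is $0$. This bound does not depend on $\epsilon$.

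For (i), the winning side is $(L-c)_+=(1-\epsilon)\Delta/\epsilon$ on $\omega_2$, so its mean is $(1-\epsilon)\Delta$. With $t=1$ and $\ell=c-\Delta$, the event $\{W_1\le\frac1\alpha\}$ is $\{L\le c-\Delta\}=\{\omega_1\}$ ($\omega_0$ being null), so $\gamma_1=1-\epsilon$. Now suppose some bound $\Phi(a,K,M_1,\Delta)<1$ held. Every member of the family with $M_1=\Delta$ as a valid (upper) constant, and $K=e^{a\Delta}$, satisfies its hypotheses. Yet $\gamma_1\to1$ along the family, a contradiction.

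For (ii), compute $M_p=\epsilon\,((1-\epsilon)\Delta/\epsilon)^p=\epsilon^{1-p}(1-\epsilon)^p\Delta^p$. Then $M_p^{q-1}=M_p^{1/(p-1)}=\epsilon^{-1}(1-\epsilon)^{q}\Delta^{q}$, which gives $\Delta^q/M_p^{q-1}=\epsilon(1-\epsilon)^{-q}$. The exact exponent is $\log\frac1{1-\epsilon}=\epsilon+O(\epsilon^2)$, so the ratio tends to $1$.

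The only interpretive step is the last conclusion. Both the guaranteed exponent $\Delta^q/(2(4c_pM_p)^{q-1})$ and the truth scale like $M_p^{-(q-1)}$ along a family where $M_p\to\infty$. Suppose a valid bound had exponent of order $\Delta^{q'}/M_p^{q'-1}$ with $q'<q$. That expression is $\asymp\epsilon^{(q'-1)(p-1)}$, and $(q'-1)(p-1)<1$, so it decays more slowly than $\epsilon$ and would eventually exceed the true exponent. This contradiction fixes $q$ and leaves only the constant free.

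The main obstacle is just careful bookkeeping: the crash constant $K$ must be uniform in $\epsilon$, the $\omega_0$ atom must be $\Prob_1$-null, and the threshold must be hit exactly at $L=c-\Delta$ (non-strict $\le$).
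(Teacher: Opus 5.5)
Your proposal is correct and follows the paper's proof essentially line by line. It uses the same direct computation of the mean, the crash tail with $K=e^{a\Delta}$, $\gamma_1(\alpha)=1-\epsilon$ (including the non-strict inequality at $\omega_1$), and $M_p^{q-1}=\epsilon^{-1}(1-\epsilon)^{q}\Delta^{q}$ via $(p-1)(q-1)=1$. Your explicit choice of $\Prob_0$, and your argument that a bound of order $\Delta^{q'}/M_p^{q'-1}$ with $q'<q$ would eventually exceed the true exponent $\log\frac1{1-\epsilon}\sim\epsilon$, spell out points the paper leaves implicit.
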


\noindent\emph{Proof.}
$\E_{\Prob_1}[\log E]=(1-\epsilon)(c-\Delta)+\epsilon(c+\frac{(1-\epsilon)
\Delta}{\epsilon})=c$. For the crash tail, $\Prob_1(\log E\le c-x)$ equals
$1-\epsilon$ for $x\le\Delta$ and $0$ for $x>\Delta$, and
$(1-\epsilon)\le e^{a\Delta}e^{-ax}$ on $[0,\Delta]$. With
$\ell=c-\Delta$ the threshold is $\frac1\alpha=e^{c-\Delta}$, so
$\gamma_1(\alpha)=\Prob_1(E\le e^{c-\Delta})=1-\epsilon$. Part (i) is then
immediate from $\E_{\Prob_1}[(\log E-c)_+]=\epsilon\cdot
\frac{(1-\epsilon)\Delta}{\epsilon}$. For (ii),
$M_p=\epsilon(\frac{(1-\epsilon)\Delta}{\epsilon})^{p}
=\epsilon^{1-p}(1-\epsilon)^{p}\Delta^{p}$, hence
$M_p^{q-1}=\epsilon^{(1-p)(q-1)}(1-\epsilon)^{p(q-1)}\Delta^{p(q-1)}
=\epsilon^{-1}(1-\epsilon)^{q}\Delta^{q}$ because $(p-1)(q-1)=1$ and
$p(q-1)=q$; the stated ratio follows, and
$\log\frac1{1-\epsilon}=\epsilon+O(\epsilon^{2})$. \qed

\noindent\emph{Proof of \Cref{cor:rung2pp}.}
Write $Y:=c_n-L_n$ as before. By the layer-cake formula and \eqref{eq:h2},
for every integer $k\ge2$,
\[
   \E_{\Prob_1}\lB (Y_+)^{k}\mid\Fc_{n-1}\rB
   =\int_0^{\infty}k\,x^{k-1}\,\Prob_1\lp Y>x\mid\Fc_{n-1}\rp dx
   \ \le\ K_n\int_0^{\infty}k\,x^{k-1}e^{-ax}dx=\frac{K_n\,k!}{a^{k}} .
\]
Next, $e^{z}\le1+z+\frac{z^{2}}{2}+\sum_{k\ge3}\frac{(z_+)^{k}}{k!}$ for every
$z\in\R$: for $z\ge0$ this is the exponential series, and for $z\le0$ it is
$e^{z}\le1+z+\frac{z^{2}}{2}$. Applying it to $z=sY$ with $0\le s<a$, taking
conditional expectations, dropping the linear term because
$\E_{\Prob_1}[Y\mid\Fc_{n-1}]\le0$, and bounding the quadratic term by
\eqref{eq:h2c} and the terms of order $k\ge3$ by the display above,
\[
   \E_{\Prob_1}\lB e^{sY}\mid\Fc_{n-1}\rB
   \ \le\ 1+\frac{w_{0,n}s^{2}}{2}
      +K_n\sum_{k\ge3}\lp\frac sa\rp^{k} ,
\]
and $\log(1+u)\le u$ gives \eqref{eq:maj-rung2pp}; the series sums to
$\frac{(s/a)^{3}}{1-s/a}$ for $s<a$. The aggregate is immediate, since the
constants enter additively. The function $\Psi^{(t)}$ is a power series in $s$
with nonnegative coefficients, hence convex with $\Psi^{(t)}(0)=0$, and
\[
   \lp\Psi^{(t)}\rp'(a\,u)=a\,w_0\,u
      +\frac Ka\sum_{k\ge3}k\,u^{k-1}
   =a\,w_0\,u+\frac Ka\cdot\frac{u^{2}(3-2u)}{(1-u)^{2}} ,
\]
writing $u:=s/a\in[0,1)$ and using
$\sum_{k\ge3}k\,u^{k-1}=\frac{d}{du}\frac{u^{3}}{1-u}
=\frac{u^{2}(3-2u)}{(1-u)^{2}}$. When $K>0$ this is continuous and
strictly increasing on $[0,1)$ from $0$ to $+\infty$, so
\eqref{eq:sstar-rung2pp} has a unique root and \Cref{thm:master}(c) applies.

For \eqref{eq:rung2pp-small} take $s:=\Delta_t/w_0$, admissible because
$\Delta_t\le\frac{aw_0}{2}$ gives $s\le\frac a2<a$. Then
$s\Delta_t-\frac{w_0s^{2}}{2}=\frac{\Delta_t^{2}}{2w_0}$, while
$(1-\frac sa)^{-1}\le2$ bounds the remaining term by
$2K(\frac sa)^{3}=\frac{2K\Delta_t^{3}}{(aw_0)^{3}}$; subtracting gives
\eqref{eq:rung2pp-small}. For \eqref{eq:rung2pp-large} take
$s:=a(1-\sqrt{K/(a\Delta_t)})\in(0,a)$, which is admissible because
$a\Delta_t>K$. Then $s\Delta_t=a\Delta_t-\sqrt{K a\Delta_t}$,
$\frac{w_0s^{2}}{2}\le\frac{a^{2}w_0}{2}$, and
$K(\frac sa)^{3}(1-\frac sa)^{-1}\le K(1-\frac sa)^{-1}=\sqrt{Ka\Delta_t}$.
\qed

\begin{remark}[An optional second majorant]\label{rem:optional}
Discarding \eqref{eq:h2c} altogether and using only the tail bound, the same
layer-cake computation gives, for $0\le s<a$,
\[
   \varphi_n(s)\le\log\E_{\Prob_1}\lB e^{sY_+}\mid\Fc_{n-1}\rB
   \le\log\lp1+\frac{K_ns}{a-s}\rp=:\tilde\psi_n(s) .
\]
\Cref{def:main} requires no convexity, so $\min\{\psi_n,\tilde\psi_n\}$ is
again a majorant and the two may be combined. Since
$\tilde\psi_n'(0^{+})=K_n/a>0$, the second majorant is linear at the origin and
carries a critical horizon of its own; it can only help at large gaps.
\end{remark}

\subsection{Rung 3}
\label{app:rung3}

\begin{lemma}[A wealth floor supplies Bernstein moments]
\label{lem:floor-bernstein}
\Cref{hyp:freedman} implies the moment condition of \Cref{rem:rung3prime} with
$\tau=b/3$, where $b:=\sup_nb_n$, provided $b<\infty$.
\end{lemma}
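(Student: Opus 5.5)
We need to show that, for every integer $k\ge3$,
\[
\E_{\Prob_1}\lB((c_n-L_n)_+)^k\mid\Fc_{n-1}\rB\le\frac{k!}{2}\,w_n\,(b/3)^{k-2},
\]
together with the second-moment bound $\E[(L_n-c_n)^2\mid\Fc_{n-1}]\le w_n$. The latter is already part of \Cref{hyp:freedman}, so only the higher moments need an argument.

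\emph{Step 1 (pointwise).} Write $Y:=c_n-L_n$. By the floor \eqref{eq:h3}, $Y\le b_n\le b$ holds $\Prob_1$-a.s., so $0\le Y_+\le b$. Hence, pointwise,
\[
(Y_+)^k=(Y_+)^{k-2}(Y_+)^2\le b^{k-2}\,Y_+^2\le b^{k-2}\,Y^2 .
\]

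\emph{Step 2 (conditional expectation).} Taking conditional expectations and using the variance half of \eqref{eq:h3} gives
\[
\E_{\Prob_1}\lB(Y_+)^k\mid\Fc_{n-1}\rB\le b^{k-2}w_n .
\]

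\emph{Step 3 (numerical inequality).} It remains to check that $b^{k-2}\le\frac{k!}{2}(b/3)^{k-2}$, which is equivalent to $3^{k-2}\le k!/2$ for every $k\ge3$. At $k=3$ both sides equal $3$. For the induction step, the left side is multiplied by $3$ when $k$ increases by one, while the right side is multiplied by $k+1\ge4$. So the inequality holds for all $k\ge3$, and this constant is exactly where $\tau=b/3$ comes from.

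\emph{Remarks.} The argument is per step, so $b_n$ could replace $b$. The statement, however, requires a common $\tau$, which is why it uses $b=\sup_nb_n<\infty$. The same works if $b_n$ is predictable, provided it is bounded above by the deterministic $b$, as in \Cref{rem:promote}. There is no real obstacle here. The only points needing care are that the floor bounds only the crash side $Y_+$, which is all the hypothesis asks for, and that one bounds $Y_+^2$ by $Y^2$, since the anchored second moment $w_n$ controls the full square and not just its positive part.
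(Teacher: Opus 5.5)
Your proof is correct and follows the paper's own argument essentially step for step. The floor gives $(c_n-L_n)_+\le b$, hence the $k$-th crash moment is at most $b^{k-2}w_n$, and the constant then reduces to $3^{k-2}\le\frac{k!}{2}$, which is an equality at $k=3$ and whose ratio grows by the factor $\frac{k+1}{3}>1$ at each further step.
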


\noindent\emph{Proof.}
The floor gives $(c_n-L_n)_+\le b_n\le b$, so for every integer $k\ge3$,
\[
   \E_{\Prob_1}\lB\lp(c_n-L_n)_+\rp^{k}\mid\Fc_{n-1}\rB
   \ \le\ b^{\,k-2}\,\E_{\Prob_1}\lB(L_n-c_n)^{2}\mid\Fc_{n-1}\rB
   \ \le\ b^{\,k-2}w_n ,
\]
and $b^{k-2}\le\frac{k!}{2}(b/3)^{k-2}$ is exactly $3^{k-2}\le\frac{k!}{2}$,
which holds for every $k\ge3$ with equality at $k=3$ and strictly thereafter,
since the quotient $\frac{k!}{2\cdot3^{k-2}}$ equals $1$ at $k=3$ and is
multiplied by $\frac{k+1}{3}>1$ at each further step. \qed

\noindent\emph{Proof of \Cref{cor:rung3}.}

\emph{The majorant.} Write $Y:=c_n-L_n$, so $Y\le b_n$ $\Prob_1$-a.s.\ by
\eqref{eq:h3}, $\E_{\Prob_1}[Y\mid\Fc_{n-1}]\le0$ by \eqref{eq:anchor}, and
$\E_{\Prob_1}[Y^{2}\mid\Fc_{n-1}]\le w_n$. As in \Cref{lem:maj-crash},
\[
   \E_{\Prob_1}\lB e^{sY}\mid\Fc_{n-1}\rB
   \ \le\ 1+\E_{\Prob_1}\lB Y^{2}\,g_s(Y)\mid\Fc_{n-1}\rB ,
   \qquad g_s(y):=\frac{e^{sy}-sy-1}{y^{2}} ,
\]
and $g_s$ is nondecreasing by \Cref{lem:elem}, so $g_s(Y)\le g_s(b_n)$ and the
right-hand side is at most $1+w_ng_s(b_n)$. Taking logarithms and using
$\log(1+u)\le u$ gives \eqref{eq:maj-rung3}.

\emph{The aggregate.} With $b\ge\max_{n\le t}b_n$, \Cref{lem:elem} gives
$g_s(b_n)\le g_s(b)$, hence
$\sum_{n\le t}\psi_n(s)=s^{2}\sum_{n\le t}w_ng_s(b_n)\cdot\frac{1}{s^{2}}
\le\frac{w}{b^{2}}(e^{sb}-sb-1)$ for $w\ge\sum_{n\le t}w_n$.

\emph{The transform.} Put $\Psi(s)=\frac{w}{b^{2}}(e^{sb}-sb-1)$. Then
$\Psi'(s)=\frac{w}{b}(e^{sb}-1)$, so $\Psi'(s)=x$ at
$s^{\ast}=\frac1b\log(1+\frac{bx}{w})$, which is \eqref{eq:sstar-rung3}, and
$e^{s^{\ast}b}=1+\frac{bx}{w}$ gives
\begin{align*}
   \Psi^{\star}(x)&=s^{\ast}x-\frac{w}{b^{2}}\lp\frac{bx}{w}-s^{\ast}b\rp
   =s^{\ast}\lp x+\frac wb\rp-\frac xb\\
   &=\frac{w}{b^{2}}\lB\lp1+\frac{bx}{w}\rp\log\lp1+\frac{bx}{w}\rp
      -\frac{bx}{w}\rB
   =\frac{w}{b^{2}}h\lp\frac{bx}{w}\rp .
\end{align*}
The inequality $h(z)\ge\frac{z^{2}}{2(1+z/3)}$ is Bennett's classical
comparison, and substituting $z=bx/w$ turns it into
$\frac{w}{b^{2}}h(\frac{bx}{w})\ge\frac{x^{2}}{2(w+\frac{bx}{3})}$.
\Cref{thm:master}(b) then gives \eqref{eq:bound-rung3}.

\emph{Rung $3'$.} Under the moment condition of \Cref{rem:rung3prime}, for
$0\le s<1/\tau$,
\begin{align*}
   \E_{\Prob_1}\lB e^{sY}\mid\Fc_{n-1}\rB
   &\le\ 1+\frac{s^{2}}{2}\E_{\Prob_1}\lB Y^{2}\mid\Fc_{n-1}\rB
     +\sum_{k\ge3}\frac{s^{k}}{k!}
       \E_{\Prob_1}\lB(Y_+)^{k}\mid\Fc_{n-1}\rB\\
   &\le\ 1+\frac{w_ns^{2}}{2}\sum_{k\ge2}(\tau s)^{k-2}
   =1+\frac{w_ns^{2}}{2(1-\tau s)} ,
\end{align*}
and $\log(1+u)\le u$ gives the majorant. For its transform put
$\Psi(s)=\frac{ws^{2}}{2(1-\tau s)}$ and
$u:=\tau s$; then $\Psi'(s)=\frac{w}{\tau}\frac{u(2-u)}{2(1-u)^{2}}$, and with
$\varsigma:=\sqrt{1+\frac{2\tau x}{w}}$ the choice $u=1-\varsigma^{-1}$ gives
$1-u=\varsigma^{-1}$ and $u(2-u)=1-\varsigma^{-2}=\frac{2\tau x}{w}
\varsigma^{-2}$, hence $\Psi'(s)=x$; this is the stated $s^{\ast}$, and
substituting back gives
$\Psi^{\star}(x)=\frac{w}{\tau^{2}}\gamma(\frac{\tau x}{w})$ with
$h_1(z)=1+z-\sqrt{1+2z}$. Finally $h_1(z)\ge\frac{z^{2}}{2(1+z)}$ gives
the Bernstein form. \qed

\subsection{\texorpdfstring{Rungs 4 and $4'$}{Rungs 4 and 4'}}
\label{app:rung4}

\noindent\emph{Proof of \Cref{cor:rung4}.}
The majorant is \Cref{hyp:subgauss} itself, and
$\sum_{n\le t}\frac{\sigma_n^{2}s^{2}}{2}\le\frac{V_ts^{2}}{2}$. For $x\ge0$,
\[
   \lp\Psi^{(t)}\rp^{\star}(x)
   =\sup_{s\ge0}\lC sx-\frac{V_ts^{2}}{2}\rC=\frac{x^{2}}{2V_t} ,
\]
attained at $s^{\ast}=x/V_t\ge0$ (for $V_t=0$ read the supremum as $+\infty$
for $x>0$ and $0$ for $x=0$). \Cref{thm:master}(b) gives
\eqref{eq:bound-rung4} and \Cref{thm:master}(d) gives \eqref{eq:unif-rung4}.
For \eqref{eq:stop-rung4}, insert $C_t=tc$, $\Psi^{(t)}(s)=t\sigma^{2}s^{2}/2$
and $s=c/\sigma^{2}$ into \eqref{eq:stop}: the condition
$s\,tc-t\sigma^{2}s^{2}/2\ge s\ell+\log\frac1\delta$ becomes
$t\frac{c^{2}}{2\sigma^{2}}\ge\frac{c\ell}{\sigma^{2}}+\log\frac1\delta$, i.e.\
$t\ge\frac{2\ell}{c}+\frac{2\sigma^{2}}{c^{2}}\log\frac1\delta$. \qed

\noindent\emph{Proof of \Cref{cor:rung4p}.}
On $[0,s_{\max})$ the map $\varphi_n$ is finite and twice differentiable with
$\varphi_n''(s)=\Var_{Q_{n,s}}(L_n)\le v_n$, by differentiating
\eqref{eq:cgf} twice under the integral sign, which is legitimate on the
interior of the domain of finiteness. Since $\varphi_n(0)=0$ and
$\varphi_n'(0)=-(\E_{\Prob_1}[L_n\mid\Fc_{n-1}]-c_n)\le0$ by
\eqref{eq:anchor}, Taylor's formula with integral remainder gives
\[
   \varphi_n(s)=s\varphi_n'(0)+\int_0^{s}(s-v)\varphi_n''(v)\,dv
   \ \le\ \frac{v_ns^{2}}{2} .
\]
For the transform, $x\mapsto\sup_{0\le s<s_{\max}}\{sx-\frac{V_ts^{2}}{2}\}$
is attained at $s=x/V_t$ when $x\le s_{\max}V_t$, with value
$\frac{x^{2}}{2V_t}$, and is otherwise increasing on $[0,s_{\max})$, so the
supremum is the limit $s\uparrow s_{\max}$, namely
$s_{\max}x-\frac{V_ts_{\max}^{2}}{2}$. In the first case
$\frac{x^{2}}{2V_t}=\frac x2\cdot\frac{x}{V_t}$; in the second
$V_ts_{\max}\le x$ gives
$s_{\max}x-\frac{V_ts_{\max}^{2}}{2}\ge\frac{s_{\max}x}{2}$. Both are at least
$\frac x2\min\{\frac{x}{V_t},s_{\max}\}$, which is
\eqref{eq:bound-rung4p}. \qed

\subsection{Rung 5}
\label{app:rung5}

\noindent\emph{Proof of \Cref{cor:rung5}.}

\emph{The majorants.} Write $Y:=c_n-L_n$, which by \eqref{eq:h5} takes values in
$[A_n,B_n]$ with $A_n:=c_n-\overline L_n=d_n-r_n$ and $B_n:=c_n-\underline
L_n=d_n$, an interval of length $r_n$, and $\E_{\Prob_1}[Y\mid\Fc_{n-1}]\le0$.
Hoeffding's lemma (\Cref{thm:app-hoeffding}) gives
$\varphi_n(s)\le s\E_{\Prob_1}[Y\mid\Fc_{n-1}]+\frac{r_n^{2}s^{2}}{8}
\le\frac{r_n^{2}s^{2}}{8}$, which is $\psi_n^{\mathrm{quad}}$. For
$\psi_n^{\mathrm{exact}}$, convexity of $y\mapsto e^{sy}$ gives the chord bound
$e^{sy}\le\frac{B_n-y}{r_n}e^{sA_n}+\frac{y-A_n}{r_n}e^{sB_n}$ on $[A_n,B_n]$,
so
\[
   \E_{\Prob_1}\lB e^{sY}\mid\Fc_{n-1}\rB
   \ \le\ \frac{B_n-\E_{\Prob_1}[Y\mid\Fc_{n-1}]}{r_n}e^{sA_n}
        +\frac{\E_{\Prob_1}[Y\mid\Fc_{n-1}]-A_n}{r_n}e^{sB_n} .
\]
For $s\ge0$ we have $e^{sB_n}\ge e^{sA_n}$, so the right-hand side is
nondecreasing in $\E_{\Prob_1}[Y\mid\Fc_{n-1}]$ and may be evaluated at $0$,
giving $y_ne^{sA_n}+(1-y_n)e^{sB_n}=e^{sd_n}(y_ne^{-sr_n}+1-y_n)$ since
$B_n/r_n=y_n$. Taking logarithms gives $\psi_n^{\mathrm{exact}}$; that
$\psi^{\mathrm{exact}}_n\le\psi^{\mathrm{quad}}_n$ is Hoeffding's lemma applied
to the two-point law itself.

\emph{Part (a).} $\sum_{n\le t}\frac{r_n^{2}s^{2}}{8}\le\frac{R_t^{2}s^{2}}{8}$,
whose transform is $\sup_{s\ge0}\{sx-\frac{R_t^{2}s^{2}}{8}\}
=\frac{2x^{2}}{R_t^{2}}$, attained at $s^{\ast}=\frac{4x}{R_t^{2}}$.

\emph{Part (b), the aggregate.} Let $r\ge\max_{n\le t}r_n$ and write
$\tilde y_n:=d_n/r\in[0,1]$. Since $Y$ takes values in
$[d_n-r_n,d_n]\subseteq[d_n-r,d_n]$, the chord bound may be run on the larger
interval instead, which gives, by the same two lines,
$\varphi_n(s)\le sd_n+\log(\tilde y_ne^{-sr}+1-\tilde y_n)$. Since $\sum_{n\le t}d_n=tr\bar y_t$ and, by concavity of
$\log$ and Jensen's inequality applied to the uniform average over $n\le t$,
\[
   \sum_{n\le t}\log\lp\tilde y_ne^{-sr}+1-\tilde y_n\rp
   \ \le\ t\log\lp\frac1t\sum_{n\le t}
      \lp\tilde y_ne^{-sr}+1-\tilde y_n\rp\rp
   =t\log\lp\bar y_te^{-sr}+1-\bar y_t\rp ,
\]
the stated $\Psi^{(t)}$ dominates $\sum_{n\le t}\psi_n^{\mathrm{exact}}$.

\emph{Part (b), the transform.} Put $\Psi(s)=t[sr\bar y+\log(\bar
ye^{-sr}+1-\bar y)]$ with $\bar y=\bar y_t$. Then
\[
   \Psi'(s)=t\lB r\bar y-r\,\frac{\bar ye^{-sr}}
      {\bar ye^{-sr}+1-\bar y}\rB ,
\]
and setting $\bar x:=\frac{\bar ye^{-sr}}{\bar ye^{-sr}+1-\bar y}\in(0,\bar y)$ the
equation $\Psi'(s)=x$ reads $\bar x=\bar y-\frac{x}{tr}$. Solving
$\bar x=\frac{\bar ye^{-sr}}{\bar ye^{-sr}+1-\bar y}$ for $s$ gives
$e^{-sr}=\frac{\bar x(1-\bar y)}{\bar y(1-\bar x)}$, i.e.\ \eqref{eq:sstar-rung5b}
with $\bar x=x_t$ at $x=\Delta_t$. Substituting back, and using
$\bar ye^{-s^{\ast}r}+1-\bar y=\frac{1-\bar y}{1-\bar x}$,
\[
   \Psi^{\star}(x)=s^{\ast}x-\Psi(s^{\ast})
   =-s^{\ast}rt\bar x+t\log\frac{1-\bar x}{1-\bar y}
   =t\lB \bar x\log\frac{\bar x}{\bar y}
      +(1-\bar x)\log\frac{1-\bar x}{1-\bar y}\rB
   =t\,\kl(\bar x\|\bar y) ,
\]
which is the stated rate function. \Cref{thm:master}(b) gives
\eqref{eq:bound-rung5a} and \eqref{eq:bound-rung5b}, and Pinsker's inequality
(\Cref{thm:app-pinsker}) gives the last claim. \qed

\begin{proposition}[The binary rate of \Cref{cor:rung5}(b) is exact]
\label{prp:bdd-sharp}
Fix $\underline L<0<\overline L$, put $r:=\overline L-\underline L$, let
$y\in(0,1)$ with $c:=\underline L+yr>0$, and fix $\alpha\in(0,1)$. There exist
$P_0,P_1$ and an i.i.d.\ betting sequence satisfying \Cref{hyp:bounded} with
these constants and $\E_{P_1}[\log E]=c$ --- the extremal \evar{} is two-point,
supported on $\{e^{\underline L},e^{\overline L}\}$ --- for which
\[
   \lim_{t\to\infty}-\frac1t\log\gamma_t(\alpha)
   \ =\ \kl\lp-\tfrac{\underline L}{r}\,\Big\|\,y\rp
   \ =\ \lim_{t\to\infty}\kl\lp\bar y_t-\tfrac{\Delta_t}{tr}\,
        \Big\|\,\bar y_t\rp .
\]
Consequently no bound depending only on
$(\underline L,\overline L,c)$ can have a larger exponent.
\end{proposition}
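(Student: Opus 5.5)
The plan is to build the extremal pair explicitly and then let Cram\'er's theorem (\Cref{thm:app-cramer}) do the asymptotics.

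\emph{Construction.} On a two-point space $\Xc=\{x_-,x_+\}$ take the \evar{} $E(x_-)=e^{\underline L}$, $E(x_+)=e^{\overline L}$. Put $P_1(x_+)=y$, so that $\E_{P_1}[\log E]=\underline L+yr=c>0$. For the null we need $P_0(x_-)e^{\underline L}+P_0(x_+)e^{\overline L}\le1$ with both masses positive. Since $e^{\underline L}<1<e^{\overline L}$, the choice $P_0(x_+)=\frac{1-e^{\underline L}}{e^{\overline L}-e^{\underline L}}\in(0,1)$ gives equality, and $P_1\ll P_0$ holds trivially. The i.i.d.\ product of copies of $E$ then satisfies \Cref{hyp:bounded} with the given constants. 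Take $c_n\equiv c$, so that $d_n\equiv d=c-\underline L=yr$ and $\bar y_t\equiv y$.

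\emph{Exact exponent.} Write $\log W_t=t\underline L+rB_t$ with $B_t\sim\mathrm{Bin}(t,y)$ under $\Prob_1$. Then $\{W_t\le\frac1\alpha\}=\{B_t/t\le -\underline L/r+\ell/(tr)\}$. Set $z:=-\underline L/r$; it lies in $(0,y)$ because $\underline L<0$ and $c>0$. For fixed $\alpha$ the threshold $z+\ell/(tr)$ decreases to $z$, which is strictly inside $(0,y)$. Cram\'er's theorem for Bernoulli means (equivalently, Chernoff's bound above plus the standard binomial lower bound $\Prob(B_t\le k)\ge\binom tk y^k(1-y)^{t-k}\ge(t+1)^{-1}e^{-t\,\kl(k/t\|y)}$ with $k=\lfloor tz\rfloor$) then gives $-\frac1t\log\gamma_t(\alpha)\to\kl(z\|y)$. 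For the lower bound on $\gamma_t$, note that the event contains $\{B_t\le\lfloor tz\rfloor\}$. Continuity of $\kl(\cdot\|y)$ absorbs the rounding $\lfloor tz\rfloor/t\to z$, and the polynomial prefactor vanishes on the exponential scale. The upper bound on the rate already follows from \eqref{eq:bound-rung5b}.

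\emph{Identification with the bound.} In the bound, $\Delta_t=tc-\ell$ for large $t$. Hence $\bar y_t-\frac{\Delta_t}{tr}=y-\frac{c}{r}+\frac{\ell}{tr}=-\frac{\underline L}{r}+\frac{\ell}{tr}\to z$, using $y-c/r=-\underline L/r$. Continuity of $\kl(\cdot\|y)$ on $(0,1)$ gives the second equality. The closing sentence follows because any bound valid for all designs with these $(\underline L,\overline L,c)$ must hold for this one, so its exponent cannot exceed $\kl(z\|y)$.

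\emph{Main obstacle.} The only delicate step is the matching lower bound on $\gamma_t$. The threshold is moving at rate $\ell/t$ and the event is a lattice event, so I would use the explicit binomial-term estimate rather than quote Cram\'er abstractly. This keeps the rounding explicit and avoids any interior-point hypothesis.
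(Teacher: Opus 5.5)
Your proof is correct and follows the paper's argument. Both use the two-point extremal \evar{} on $\{e^{\underline L},e^{\overline L}\}$, the representation $\log W_t=t\underline L+rB_t$ with $B_t\sim\mathrm{Bin}(t,y)$, Cram\'er's theorem for the binomial, and the identification $\bar y_t-\Delta_t/(tr)\to-\underline L/r$. The one difference is that you handle the moving threshold $z+\ell/(tr)$ explicitly, which the paper leaves implicit when it applies Cram\'er's theorem at the limiting level: the upper bound on $\gamma_t$ comes from \eqref{eq:bound-rung5b}, and the lower bound from the type-class estimate at $k=\lfloor tz\rfloor$. One terminology slip: what you call the ``upper bound on the rate'' from \eqref{eq:bound-rung5b} is really a lower bound on $-\frac1t\log\gamma_t$.
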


\noindent\emph{Proof.}
Let $E$ take the values $e^{\overline L}$ and $e^{\underline L}$ with
$P_1$-probabilities $y$ and $1-y$, so $\E_{P_1}[\log E]=y\overline
L+(1-y)\underline L=\underline L+yr=c$. Let $P_0$ give the same two values
probabilities $y_0$ and $1-y_0$ with $y_0\in(0,1)$ chosen so that
$y_0e^{\overline L}+(1-y_0)e^{\underline L}\le1$, which is possible because
$e^{\underline L}<1$; then $E$ is an \evar{} and $P_1\ll P_0$. Under $P_1$,
$\log W_t=t\underline L+r\,S_t$ with $S_t\sim\mathrm{Bin}(t,y)$, so
\[
   \gamma_t(\alpha)=P_1\lp S_t\le\frac{\ell-t\underline L}{r}\rp ,
   \qquad
   \frac1t\cdot\frac{\ell-t\underline L}{r}\ \longrightarrow\
   -\frac{\underline L}{r}\ \in(0,y) ,
\]
the last because $0<c=\underline L+yr$ gives $-\underline L/r<y$, and
$-\underline L/r>0$. Cram\'er's theorem for the binomial
(\Cref{thm:app-cramer}) gives
$-\frac1t\log\gamma_t(\alpha)\to\kl(-\frac{\underline L}{r}\|y)$. On the other
hand $\bar y_t=y$ and $\frac{\Delta_t}{tr}=\frac{(tc-\ell)_+}{tr}\to\frac cr$,
so $\bar y_t-\frac{\Delta_t}{tr}\to y-\frac cr=-\frac{\underline L}{r}$, and the
two limits agree. \qed

\subsection{Rung 6}
\label{app:rung6}

\noindent\emph{Proof of \Cref{cor:rung6}.}
Under \Cref{hyp:iid} the increments $L_n=\log E_n$ are i.i.d.\ under $\Prob_1$
and independent of $\Fc_{n-1}$, so
$\varphi_n(s)=\log\E_{P_1}[e^{-s(\log E-c)}]=sc-\Lambda_s(E)$ is deterministic
and \eqref{eq:majorant} holds with equality; no smaller majorant exists.
\Cref{cor:iid} applies with $\psi=\varphi$ and $C_t=tc$, and
\[
   t\,\psi^{\star}\lp\frac{\Delta_t}{t}\rp
   =\sup_{s\ge0}\lC s\Delta_t-t\lp sc-\Lambda_s(E)\rp\rC
   =\sup_{s\ge0}\lC t\Lambda_s(E)-s\lp tc-\Delta_t\rp\rC
   =\sup_{s\ge0}\lC t\Lambda_s(E)-s\ell\rC
\]
whenever $tc\ge\ell$, which is \eqref{eq:bound-rung6}; when $tc<\ell$ both sides
are trivial. Differentiating, $\frac{d}{ds}\Lambda_s(E)=\E_{Q_s}[\log E]$ with
$Q_s$ as in \eqref{eq:sstar-rung6}, so the stationarity condition
$t\frac{d}{ds}\Lambda_s(E)=\ell$ is \eqref{eq:sstar-rung6}. The limit \eqref{eq:cramerlimit} is Cram\'er's theorem
(\Cref{thm:app-cramer}) applied to the i.i.d.\ sequence $(\log E_n)_{n\in\N}$
at the level $\ell/t\to0$, together with $\Lambda(E)=I(0)$; and
\eqref{eq:cramerlimit-seq} then follows from \eqref{eq:ordering}. \qed

\noindent\emph{Proof of \Cref{lem:positive}.}
Write $\phi(s):=\E_{P_1}[E^{-s}]\in(0,\infty]$, so that
$\Lambda_s(E)=-\log\phi(s)$ and $\phi(0)=1$.

\ref{it:p1}$\Rightarrow$\ref{it:p2}: if $\phi(s)<1$ for some $s>0$ then in
particular $\phi(s)<\infty$, which is \Cref{hyp:cramer}; and conditional Jensen
for the convex map $x\mapsto e^{-sx}$ gives $\phi(s)\ge e^{-sc}$, so
$e^{-sc}<1$ and hence $c>0$.

\ref{it:p2}$\Rightarrow$\ref{it:p1}: by H\"older's inequality
$\phi(s)\le\phi(s_0)^{s/s_0}$ for $s\in[0,s_0]$, so $\phi$ is finite there.
The difference quotients
$\frac{\phi(s)-1}{s}=\E_{P_1}\lB\frac{e^{-s\log E}-1}{s}\rB$ decrease as
$s\downarrow0$ and are dominated above by their value at $s_0$, which is
integrable; monotone convergence therefore gives the right derivative
$\phi'(0^{+})=-\E_{P_1}[\log E]=-c<0$, so $\phi(s)<1$ for all small $s>0$.

For the last claim, Jensen gives $\phi(s)\ge e^{-sc}$ for every $s\in\R$, so
$-\log\phi(s)\le sc<0$ when $s<0$ and $c>0$; since $-\log\phi(0)=0$, the
supremum over $\R$ is attained on $[0,\infty)$. \qed

\subsection{How the rungs are related}
\label{app:implications}

\begin{lemma}[Implications between the hypotheses]\label{lem:implications}
Let $(c_n)_{n\in\N}$ be a drift certificate. Then:
\begin{enumerate}[label=(\alph*),leftmargin=2.4em]
\item \Cref{hyp:bounded} implies \Cref{hyp:subgauss} with
      $\sigma_n^{2}=\frac{r_n^{2}}{4}$, and \Cref{hyp:freedman} with
      $b_n=r_n$ and $w_n=r_n^{2}$ (and with $b_n=d_n$ whenever $d_n>0$).
\item \Cref{hyp:freedman} implies \Cref{hyp:subgauss} with
      $\sigma_n^{2}=\max\{e\,w_n,\,2b_n^{2}\}$; it implies \Cref{hyp:crash} for
      every $a>0$, with $K_n=e^{ab_n}$, $p=2$ and $M_{2,n}=w_n$; and it implies
      \Cref{hyp:cramer} for every $s_0>0$, with $\rho_n=e^{-s_0c_n+s_0b_n}$;
      and it implies \Cref{hyp:crash2} for every $a>0$, with the same
      $K_n=e^{ab_n}$ and $w_{0,n}=w_n$. Composing with (a),
      \Cref{hyp:bounded} implies \Cref{hyp:crash2} as well.
\item \Cref{hyp:subgauss} implies \Cref{hyp:cramer} for every $s_0>0$, with
      $\rho_n=e^{-s_0c_n+\sigma_n^{2}s_0^{2}/2}$, and it implies the crash-tail
      half \eqref{eq:h2} of \Cref{hyp:crash} for every $a>0$, with
      $K_n=e^{a^{2}\sigma_n^{2}/2}$; it does \emph{not} imply the moment half
      \eqref{eq:h2b}, at any admissible exponent $p$.
\item \Cref{hyp:crash} implies \Cref{hyp:cramer} for every $s_0\in(0,a)$, with
      $\rho_n=\exp(-s_0c_n+\psi_n(s_0))$ and $\psi_n$ as in
      \eqref{eq:maj-rung2}.
\item \Cref{hyp:crash2} implies \Cref{hyp:crash} with $p=2$ and
      $M_{2,n}=w_{0,n}$, and it implies the Bernstein condition of
      \Cref{rem:rung3prime} with $\tau=\frac1a$ and
      $w_n=\max\{w_{0,n},\frac{2K_n}{a^{2}}\}$.
\item \Cref{hyp:tiltvar} with $s_{\max}=\infty$ implies \Cref{hyp:subgauss}
      with $\sigma_n^{2}=v_n$, and \Cref{hyp:bounded} implies
      \Cref{hyp:tiltvar} with $s_{\max}=\infty$ and $v_n=\frac{r_n^{2}}{4}$;
      composing the two is one proof of Hoeffding's lemma. Neither implication
      reverses.
\item No implication holds beyond those listed and their compositions. In
      particular \Cref{hyp:subgauss} implies
      neither \Cref{hyp:freedman} nor \Cref{hyp:crash}, and \Cref{hyp:crash}
      implies neither \Cref{hyp:subgauss} nor \Cref{hyp:freedman}: rungs~2
      and~4 are incomparable. Counterexamples are in \Cref{rem:notachain}.
\end{enumerate}
\end{lemma}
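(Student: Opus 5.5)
The plan is to prove the seven parts one at a time. Each part rests on one elementary inequality for $Y:=c_n-L_n$, which satisfies $\E_{\Prob_1}[Y\mid\Fc_{n-1}]\le0$ by \eqref{eq:anchor}. Two devices will be reused throughout. First, Markov's inequality turns an exponential-moment bound into a crash tail: $\Prob_1(Y\ge x\mid\Fc_{n-1})\le e^{-ax+\varphi_n(a)}$. Second, any majorant $\psi_n$ gives \Cref{hyp:cramer} with $\rho_n=\exp(-s_0c_n+\psi_n(s_0))$, by \Cref{lem:onestep}. Parts (c) and (d), and the Cram\'er halves of (b), are exactly this second device applied to the majorants of \Cref{cor:rung4}, \Cref{lem:maj-crash} and the bounded case. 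Under a floor $Y\le b_n$ one even has directly $\E[E_n^{-s_0}\mid\Fc_{n-1}]\le e^{-s_0c_n+s_0b_n}$. Likewise the crash tail in (c) follows from Markov's inequality with the choice $s=a$, giving $K_n=e^{a^2\sigma_n^2/2}$. In (b) the floor makes $\Prob_1(Y>x)=0$ for $x\ge b_n$, and $e^{ab_n}e^{-ax}\ge1$ for $x\le b_n$, so $K_n=e^{ab_n}$. The winning-side moment is $\E[(Y_-)^2]\le\E[Y^2]\le w_n$, which settles the \Cref{hyp:crash} and \Cref{hyp:crash2} claims.

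For (a), the bounded case is handled as follows. The sub-Gaussian proxy is Hoeffding's majorant $\psi^{\mathrm{quad}}_n$ from \Cref{cor:rung5}. The floor is $Y\le d_n\le r_n$, and $Y^2\le r_n^2$ because $Y\in[d_n-r_n,d_n]$ and $d_n\in[0,r_n]$. For (f), Taylor expansion as in the proof of \Cref{cor:rung4p} gives the first implication. The second is Popoviciu's inequality: the variance of a law supported in an interval of length $r_n$ is at most $r_n^2/4$, and every tilt $Q_{n,s}$ is again supported in that interval. For the non-reversals I would take, respectively, a law with $\varphi_n\le0$ but tilted variance unbounded at large $s$, and an unbounded Gaussian increment.

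For (e), I would reuse the layer-cake moment bound $\E[(Y_+)^k]\le K_nk!/a^k$ from the proof of \Cref{cor:rung2pp}. Then I check $K_nk!a^{-k}\le\frac{k!}{2}w_na^{-(k-2)}$, which holds exactly when $w_n\ge2K_n/a^2$. The second moment is bounded by $w_{0,n}$, and the first implication is immediate since $(Y_-)^2\le Y^2$.

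The step I expect to be the main obstacle is the sub-Gaussian constant in (b), $\sigma_n^2=\max\{ew_n,2b_n^2\}$. I would start from $\varphi_n(s)\le w_ng_s(b_n)s^2$, with $g_s$ as in \Cref{lem:elem}, and split at $sb_n=1$. For $sb_n\le1$, monotonicity of $g$ gives $g(sb_n)\le g(1)=e-2\le e/2$, hence $\varphi_n\le ew_ns^2/2$. For $sb_n>1$ the Bennett bound grows exponentially, so I would use the cruder $\varphi_n(s)\le sb_n$ coming from $Y\le b_n$, and then $sb_n\le s^2b_n^2\le\sigma_n^2s^2/2$ once $\sigma_n^2\ge2b_n^2$.

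Finally, (g) only needs to be checked against the counterexamples of \Cref{rem:notachain}, together with one remaining case for rung~1: a law with a finite negative moment but an infinite variance and no floor. This rules out every other arrow.
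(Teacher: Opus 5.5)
Parts (a)--(e) and the two positive implications in (f) follow the paper's proof step for step: conditional Hoeffding; Bennett's majorant for $sb_n\le1$ and the floor bound $\varphi_n(s)\le sb_n$ beyond; Markov at tilt $a$; the layer-cake moments; Taylor's formula and the $r_n^{2}/4$ variance bound. Using \Cref{lem:elem} to get $g(sb_n)\le g(1)=e-2\le\frac e2$ is a slightly sharper substitute for the paper's $e^{z}-z-1\le\frac{z^{2}e^{z}}{2}$. Write that bound as $\varphi_n(s)\le w_ns^{2}g(sb_n)$, since the $g_s(b_n)$ in the proof of \Cref{cor:rung3} already contains the factor $s^{2}$.

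The step that fails is your first counterexample for ``neither implication reverses'' in (f). For a single conditional law, $\varphi_n\le0$ on $[0,\infty)$ forces $Y=c_n-L_n\le0$ almost surely. Otherwise $\E_{\Prob_1}[e^{sY}\mid\Fc_{n-1}]\ge e^{s\varepsilon}\Prob_1(Y>\varepsilon\mid\Fc_{n-1})\to\infty$. A variable bounded above has tilts $Q_{n,s}$ that concentrate at its essential supremum, so $\Var_{Q_{n,s}}(L_n)\to0$ as $s\to\infty$. The tilted variance therefore cannot be unbounded at large $s$. Under $\varphi_n\le0$, \Cref{hyp:tiltvar} can only fail at $s=0$, through an infinite conditional variance. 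The example you want is the first one of \Cref{rem:notachain}: $L_n-c_n\ge0$ with $\Prob_1(L_n-c_n>x\mid\Fc_{n-1})=(1+x)^{-2}$, which is what \Cref{rem:tiltvar} uses. Your Gaussian example for the other non-reversal is fine.

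For (g) you do what the paper does, namely point to \Cref{rem:notachain}. That establishes the named incomparabilities: $4\not\Rightarrow3$, $4\not\Rightarrow2$, $2\not\Rightarrow4$, $2\not\Rightarrow3$. Your extra rung-1 example is redundant, since those examples already satisfy \Cref{hyp:cramer} through $4\Rightarrow1$ and $2\Rightarrow1$.

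However, no list of counterexamples ``rules out every other arrow'', and the blanket first sentence of (g) is in fact too strong. Apply Markov's inequality at any tilt $a\in(0,1/\tau)$ with the majorant of \Cref{rem:rung3prime}. This gives \eqref{eq:h2} with $K_n=\exp\bigl(\frac{w_na^{2}}{2(1-\tau a)}\bigr)$, so rung~$3'$ implies rung~$2''$ with $w_{0,n}=w_n$. That arrow is neither listed nor a composition, since no listed arrow leaves $3'$. \Cref{rem:positive} also invokes $3'\Rightarrow1$ and, at finite $s_{\max}$, $4'\Rightarrow1$. What you can prove, and should claim, is only the ``in particular'' part of (g).
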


\noindent\emph{Proof.}
(a) The first claim is Hoeffding's lemma (\Cref{thm:app-hoeffding}), as in the
proof of \Cref{cor:rung5}. For the second, $c_n-L_n\le c_n-\underline L_n=d_n\le
r_n$ gives the floor, and $|L_n-c_n|\le\max\{d_n,\overline L_n-c_n\}\le r_n$
gives $\E_{\Prob_1}[(L_n-c_n)^{2}\mid\Fc_{n-1}]\le r_n^{2}$.

(b) For the first claim, split at $s=1/b_n$. If $sb_n\le1$ then, by
$e^{z}-z-1\le\frac{z^{2}e^{z}}{2}$ for $z\ge0$ and \eqref{eq:maj-rung3},
\[
   \varphi_n(s)\ \le\ \frac{w_n}{b_n^{2}}\lp e^{sb_n}-sb_n-1\rp
   \ \le\ \frac{w_n}{b_n^{2}}\cdot\frac{(sb_n)^{2}e}{2}
   =\frac{e\,w_n\,s^{2}}{2} .
\]
If $sb_n\ge1$ then the floor gives $e^{-s(L_n-c_n)}\le e^{sb_n}$ pointwise,
hence $\varphi_n(s)\le sb_n\le s^{2}b_n^{2}=\frac{2b_n^{2}s^{2}}{2}$. Taking the
larger of the two constants covers all $s\ge0$. The second claim: the floor
gives $\Prob_1(L_n\le c_n-x\mid\Fc_{n-1})=0$ for $x>b_n$, so the crash tail
holds with $K_n=e^{ab_n}$ for every $a>0$ and every $x\ge0$, and
$\E_{\Prob_1}[((L_n-c_n)_+)^{2}\mid\Fc_{n-1}]\le
\E_{\Prob_1}[(L_n-c_n)^{2}\mid\Fc_{n-1}]\le w_n$. The third: the floor gives
$E_n^{-s_0}=e^{-s_0L_n}\le e^{-s_0c_n+s_0b_n}$ pointwise.

(c) Evaluating \eqref{eq:h4} at $s=s_0$ gives
$\E_{\Prob_1}[E_n^{-s_0}\mid\Fc_{n-1}]=e^{-s_0c_n+\varphi_n(s_0)}\le
e^{-s_0c_n+\sigma_n^{2}s_0^{2}/2}$. For the crash tail, Markov's inequality at
tilt $a$ gives $\Prob_1(L_n\le c_n-x\mid\Fc_{n-1})\le e^{-ax+\varphi_n(a)}\le
e^{a^{2}\sigma_n^{2}/2}e^{-ax}$. The last claim is the second counterexample of
\Cref{rem:notachain}, whose winning-side tail has no moment of any order
$p>1$.

(d) By \Cref{lem:maj-crash}, $\varphi_n(s_0)\le\psi_n(s_0)<\infty$ for
$s_0<a$, and $\E_{\Prob_1}[E_n^{-s_0}\mid\Fc_{n-1}]
=e^{-s_0c_n+\varphi_n(s_0)}$.

(e) The first claim is $((L_n-c_n)_+)^{2}\le(L_n-c_n)^{2}$; the second is the
layer-cake bound $\E_{\Prob_1}[((c_n-L_n)_+)^{k}\mid\Fc_{n-1}]\le
\frac{K_nk!}{a^{k}}=\frac{k!}{2}\cdot\frac{2K_n}{a^{2}}\cdot
\lp\frac1a\rp^{k-2}$ from the proof of \Cref{cor:rung2pp}, together with
\eqref{eq:h2c}. The addition to (b) is the same crash-tail computation
together with the observation that the second half of \eqref{eq:h3} is
\eqref{eq:h2c} verbatim.

(f) The first claim is the proof of \Cref{cor:rung4p}. For the second, under
\Cref{hyp:bounded} every tilted law $Q_{n,s}$ is supported in the same interval
of length $r_n$, and a random variable supported in an interval of length $r$
has variance at most $\frac{r^{2}}{4}$.

(g) See \Cref{rem:notachain}. \qed

\subsection{The observable rung}
\label{app:observable}

\noindent\emph{Proof of \Cref{thm:empbern}.}
Put $\xi_n:=(c_n-L_n)/\bar r$. \Cref{hyp:ceiling} says exactly that $\xi_n\ge-1$, so
$\E_{\Prob_1}[\xi_n\mid\Fc_{n-1}]$ is well defined in $(-\infty,+\infty]$, and
\eqref{eq:anchor} makes it $\le0$; in particular $\xi_n<\infty$
$\Prob_1$-a.s. No upper bound on $\xi_n$ is needed. By \Cref{thm:app-fan} and
the tower property,
\[
   \E_{\Prob_1}\lB
     e^{\lambda\xi_n-\psi_{\mathrm E}(\lambda)\xi_n^{2}}\mid\Fc_{n-1}\rB
   \ \le\ 1+\lambda\,\E_{\Prob_1}\lB\xi_n\mid\Fc_{n-1}\rB\ \le\ 1 ,
\]
so $N_t:=\prod_{n\le t}\exp\{\lambda\xi_n-\psi_{\mathrm E}(\lambda)\xi_n^{2}\}$
is a nonnegative $\Prob_1$-supermartingale with $N_0=1$. Ville's inequality
(\Cref{thm:app-ville}) gives $\Prob_1(\sup_tN_t\ge\frac1\delta)\le\delta$; on
the complementary event, simultaneously for all $t$,
\[
   \lambda\sum_{n\le t}\xi_n
   \ <\ \log\tfrac1\delta+\psi_{\mathrm E}(\lambda)\sum_{n\le t}\xi_n^{2} ,
   \qquad\text{i.e.}\qquad
   \frac{\lambda}{\bar r}\lp\sum_{n\le t}c_n-\log W_t\rp
   \ <\ \log\tfrac1\delta+\frac{\psi_{\mathrm E}(\lambda)}{\bar r^{2}}\widehat V_t .
\]
Rearranging and using $\sum_{n\le t}c_n\ge C_t$ gives the first inequality of
\eqref{eq:eb-floor}; the second is $\psi_{\mathrm E}(\lambda)\le
\frac{\lambda^{2}}{2(1-\lambda)}$ from \Cref{thm:app-fan}. Finally the
right-hand side of \eqref{eq:eb-floor} is at least $\ell$ exactly when the
condition defining $\widehat T$ holds, and \eqref{eq:eb-floor} is strict, so
$\log W_{\widehat T}>\ell$ and hence $\tau_\alpha\le\widehat T$ on that
event. \qed

\noindent\emph{Proof of \eqref{eq:eb-tuned}.}
Write $D(\lambda)=\frac{\bar r\Lambda_\delta}{\lambda}
+\frac{\lambda\bar V}{2(1-\lambda)\bar r}$.
Then $D'(\lambda)=-\frac{\bar r\Lambda_\delta}{\lambda^{2}}
+\frac{\bar V}{2\bar r(1-\lambda)^{2}}$, which vanishes exactly when
$\frac{1-\lambda}{\lambda}=\sqrt{\frac{\bar V}{2\bar r^{2}\Lambda_\delta}}$,
that is at $\lambda^{\ast}$ as stated, and $D$ is strictly convex on $(0,1)$
with $D\to\infty$ at both ends, so this is the minimum. At $\lambda^{\ast}$,
$\frac1{\lambda^{\ast}}=1+\frac{\sqrt{\bar V}}{\bar r\sqrt{2\Lambda_\delta}}$
gives $\frac{\bar r\Lambda_\delta}{\lambda^{\ast}}
=\bar r\Lambda_\delta+\sqrt{\frac{\bar V\Lambda_\delta}{2}}$, and
$\frac{\lambda^{\ast}}{1-\lambda^{\ast}}
=\frac{\bar r\sqrt{2\Lambda_\delta}}{\sqrt{\bar V}}$ gives
$\frac{\lambda^{\ast}\bar V}{2(1-\lambda^{\ast})\bar r}
=\sqrt{\frac{\bar V\Lambda_\delta}{2}}$; adding, $D(\lambda^{\ast})
=\bar r\Lambda_\delta+2\sqrt{\bar V\Lambda_\delta/2}
=\bar r\Lambda_\delta+\sqrt{2\bar V\Lambda_\delta}$. \qed

\subsection{Composite alternatives and the price of the threshold}
\label{app:composite}

\noindent\emph{Proof of \Cref{prp:composite}.}
Fix $\Prob\in\Hc_1$. By hypothesis $(c_n,C_t)$ is a drift certificate and the
stated constants furnish a majorant, both under $\Prob$, so
\Cref{thm:master}(b) applies with $\Prob_1$ replaced by $\Prob$ and yields the
displayed bound, whose right-hand side does not depend on $\Prob$. Taking the
supremum over $\Prob\in\Hc_1$ on the left finishes. \qed

\noindent\emph{Proof of \Cref{prp:price}.}
Write $s^{\ast}=s^{\ast}(c)$, so that $\psi^{\star}(c)=s^{\ast}c-\psi(s^{\ast})$.
For every $x\ge0$, taking $s=s^{\ast}$ in the supremum defining
$\psi^{\star}(x)$,
\[
   \psi^{\star}(x)\ \ge\ s^{\ast}x-\psi(s^{\ast})
   =\psi^{\star}(c)+s^{\ast}(x-c) ,
\]
which is the supergradient inequality and needs neither differentiability nor
strict convexity. With $h:=\ell/t\in[0,c]$ and $x=c-h$ this reads
$\psi^{\star}(c-h)\ge\psi^{\star}(c)-h\,s^{\ast}$.
Multiplying by $t$ and using $\Delta_t/t=c-\ell/t$ gives
$t\,\psi^{\star}(\Delta_t/t)\ge t\,\psi^{\star}(c)-s^{\ast}(c)\ell$, and
\Cref{cor:iid} turns this into \eqref{eq:price}; the identity
$e^{s^{\ast}\ell}=\alpha^{-s^{\ast}}$ is the definition of $\ell$. If
$\psi^{\star}$ is twice differentiable the loss relative to \Cref{cor:iid} is
$t[\psi^{\star}(c-h)-\psi^{\star}(c)+hs^{\ast}(c)]
=\frac{\ell^{2}}{2t}(\psi^{\star})''(\zeta)$ for some
$\zeta\in(c-h,c)$, and it is $0$ whenever $\psi^{\star}$ is affine on
$[c-h,c]$, as it is for \Cref{cor:rung1}. \qed

\section{Results quoted from the literature}
\label{app:concentration}

Each result is stated in the notation of this paper. Nothing here is proved;
a reference is given for each, together with a note on where it is used.

\begin{theorem}[Ville's inequality; {\cite{Vil39}}, {\cite[Lem.~1]{How20}}]
\label{thm:app-ville}
Let $(U_t)_{t\in\N_0}$ be a nonnegative supermartingale on a filtered
probability space $(\Omega,\Fc,(\Fc_t)_{t\in\N_0},\Prob)$ with $U_0\le1$
$\Prob$-a.s. Then
\[
   \Prob\lp\sup_{t\in\N_0}U_t\ \ge\ x\rp\ \le\ \frac1x
   \qquad\text{for every }x\in(1,\infty) .
\]
\end{theorem}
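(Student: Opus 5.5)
The plan is the classical maximal-inequality argument at a stopping time, since the statement is Ville's inequality for a nonnegative supermartingale $(U_t)$ with $U_0\le1$.

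Fix $x>1$ and a finite horizon $T\in\N_0$. Introduce the stopping time $\sigma:=\inf\{t\in\N_0: U_t\ge x\}$ and consider the bounded stopping time $\sigma\wedge T$.

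By optional stopping for nonnegative supermartingales at a bounded stopping time, $\E[U_{\sigma\wedge T}]\le\E[U_0]\le1$. To justify this without quoting optional stopping, note that the stopped process $U_{t\wedge\sigma}$ is again a supermartingale. Indeed,
\[
U_{(t+1)\wedge\sigma}-U_{t\wedge\sigma}=\mathbf 1_{\{\sigma>t\}}(U_{t+1}-U_t),
\]
and $\{\sigma>t\}\in\Fc_t$. Taking conditional expectations and using nonnegativity to avoid integrability issues, then inducting in $t$, gives the inequality.

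On the event $\{\max_{t\le T}U_t\ge x\}$ we have $\sigma\le T$, hence $U_{\sigma\wedge T}=U_\sigma\ge x$. Since $U\ge0$, this yields
\[
x\,\Prob\lp\max_{t\le T}U_t\ge x\rp\le\E\lB U_{\sigma\wedge T}\mathbf 1_{\{\sigma\le T\}}\rB\le\E[U_{\sigma\wedge T}]\le1.
\]

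The main obstacle is passing from $\max_{t\le T}$ to $\sup_t$ with a non-strict inequality. The events $\{\max_{t\le T}U_t\ge x\}$ increase in $T$, and their union is $\{\exists t: U_t\ge x\}$. This union is not $\{\sup_tU_t\ge x\}$ when the supremum is approached but not attained.

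To handle this, apply the finite-horizon bound at a level $x'\in(1,x)$. This gives $\Prob(\exists t:U_t\ge x')\le 1/x'$. Since $\{\sup_tU_t\ge x\}\subseteq\{\exists t:U_t> x'\}$, continuity from below yields $\Prob(\sup_tU_t\ge x)\le1/x'$. Letting $x'\uparrow x$ gives the claimed bound $1/x$.

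The restriction $x>1$ is only needed so that such an $x'>1$ exists. For $x\le1$ the bound is trivial anyway.
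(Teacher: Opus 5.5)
Your proof is correct. It is the standard argument: Doob's maximal inequality for the supermartingale stopped at $\sigma\wedge T$, followed by a monotone limit in $T$. The paper gives no proof of its own here, since it quotes the result from Ville (1939) and from Howard et al., so there is no alternative route to compare.

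Your extra step with $x'\uparrow x$ is genuinely needed for the form stated here, because the event $\{\sup_tU_t\ge x\}$ can be strictly larger than $\{\exists t:\ U_t\ge x\}$.

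One small correction: your finite-horizon bound $x'\,\Prob(\max_{t\le T}U_t\ge x')\le\E[U_0]$ holds for every $x'>0$. So the restriction $x>1$ plays no role anywhere in the argument, not even in choosing $x'$.
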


\noindent
Used three times: under $\Prob_0$ for the type-I guarantee
(\Cref{thm:typeI}); under $\Prob_1$ for the time-uniform form
\Cref{thm:master}(d), where the supermartingale is the tilted process
\eqref{eq:tilted}; and again under $\Prob_1$ in \Cref{thm:empbern}.

\begin{theorem}[Hoeffding's lemma; {\cite{Hoe63}},
{\cite[Lem.~2.2]{BLM13}}]\label{thm:app-hoeffding}
Let $Y$ be a random variable with $A\le Y\le B$ $\Prob$-a.s. Then for every
$s\in\R$,
\[
   \log\E_{\Prob}\lB e^{sY}\rB\ \le\ s\,\E_{\Prob}[Y]
      +\frac{s^{2}(B-A)^{2}}{8} .
\]
The same statement holds conditionally on a sub-$\sigma$-algebra, with
$\E_{\Prob}[\,\cdot\,]$ replaced throughout by $\E_{\Prob}[\,\cdot\mid\Fc']$ and
the bounds $A,B$ allowed to be $\Fc'$-measurable.
\end{theorem}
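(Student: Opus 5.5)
The plan is to prove Hoeffding's lemma in the standard way. Everything reduces to the centred case, with an analysis of the cumulant generating function of a bounded variable. First I would dispose of trivialities. If $A=B$ then $Y$ is a.s. constant and both sides equal $sA$. Otherwise put $\mu:=\E_{\Prob}[Y]\in[A,B]$ and $K(s):=\log\E_{\Prob}[e^{sY}]$, which is finite for every real $s$ because $Y$ is bounded. Dominated convergence, with dominating function $|Y|^ke^{|s|\max\{|A|,|B|\}}$, then justifies differentiating under the expectation, so $K$ is $C^2$ on $\R$ with $K(0)=0$ and $K'(0)=\mu$.

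The key identity is that $K''(s)=\Var_{Q_s}(Y)$, where $dQ_s:=e^{sY-K(s)}\,d\Prob$ is the exponential tilt. This is the same tilted family used in \Cref{prp:duality} and \Cref{hyp:tiltvar}. Since $Q_s\ll\Prob$, $Y$ lies in $[A,B]$ $Q_s$-a.s. The elementary bound for a variable with values in $[A,B]$ then applies: $\Var(Y)\le\E[(Y-\tfrac{A+B}2)^2]\le\tfrac{(B-A)^2}{4}$. Taylor's formula with integral remainder, $K(s)=s\mu+\int_0^s(s-v)K''(v)\,dv$, valid for either sign of $s$, then gives $K(s)\le s\mu+\tfrac{s^2(B-A)^2}{8}$. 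This is precisely the composition noted in \Cref{lem:implications}(f).

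For the conditional version, I would take a regular conditional distribution of $Y$ given $\Fc'$; it exists because $Y$ is real-valued. The unconditional argument is applied $\omega$-wise to that kernel, with the $\Fc'$-measurable endpoints $A(\omega),B(\omega)$ frozen. Since $A\le Y\le B$ a.s., the kernel is supported in $[A(\omega),B(\omega)]$ for a.e. $\omega$. The inequality holds for each fixed $s$ outside a null set. Because both sides are continuous in $s$, the null sets over rational $s$ can be combined, which yields the statement for all $s$ simultaneously.

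The main obstacle is this measurability and null-set bookkeeping in the conditional form, together with the justification for differentiating under the integral. The analytic core is routine. If one prefers to avoid regular conditional distributions, there is an alternative route. Reduce to $\mu$ replaced by $0$ via $Y-\mu$, use the chord bound $e^{sy}\le\frac{B-y}{B-A}e^{sA}+\frac{y-A}{B-A}e^{sB}$ exactly as in the proof of \Cref{cor:rung5}, and then show $\phi(u):=-pu+\log(1-p+pe^{u})\le u^2/8$ by $\phi(0)=\phi'(0)=0$ and $\phi''\le\tfrac14$. That computation passes through conditional expectations linearly and needs no kernels.
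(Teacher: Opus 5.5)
Your proof is correct and follows the route the paper itself indicates. The paper only quotes this theorem, but \Cref{lem:implications}(f) remarks that bounding the variance of every exponential tilt by $\frac{(B-A)^{2}}{4}$ and then applying Taylor's formula with integral remainder (as in the proof of \Cref{cor:rung4p}) is one proof of Hoeffding's lemma; that is exactly your main line, and your fallback chord argument is the chord bound used in the proof of \Cref{cor:rung5}.
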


\noindent
Used for the quadratic majorant of \Cref{cor:rung5} and for
\Cref{lem:implications}(a). It is the only place where the \emph{upper} bound
$\overline L_n$ is used at all; every other rung constrains the crash side only.

\begin{theorem}[Cram\'er's theorem; {\cite{Cra38}}, {\cite[Thm.~2.2.3]{DZ10}}]
\label{thm:app-cramer}
Let $(Y_n)_{n\in\N}$ be i.i.d.\ real random variables with
$\Lambda_Y(s):=\log\E[e^{sY_1}]<\infty$ for $s$ in a neighbourhood of $0$, and
let $I:=\Lambda_Y^{\star}$ be the Legendre transform of $\Lambda_Y$. Then for
every $x<\E[Y_1]$,
\[
   \lim_{t\to\infty}\ \frac1t\log
   \Prob\lp\frac1t\sum_{n\le t}Y_n\le x\rp\ =\ -I(x) .
\]
For $Y_1\in\{0,1\}$ with $\Prob(Y_1=1)=y$ one has $I(x)=\kl(x\|y)$.
\end{theorem}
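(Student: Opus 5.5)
The statement is Cram\'er's theorem, and I would prove it by the two classical half-estimates, an exponential Chebyshev upper bound and a tilted lower bound, in the form given in \cite[Thm.~2.2.3]{DZ10}. Write $S_t:=\sum_{n\le t}Y_n$, $\mu:=\E[Y_1]$, and fix $x<\mu$.

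\emph{Upper bound.} For every $s\ge0$, Markov's inequality applied to $e^{-sS_t}$ and independence give
\[
   \Prob\lp S_t\le tx\rp\ \le\ e^{stx}\,\E\lB e^{-sS_t}\rB
   =\exp\lp-t\lC -sx-\Lambda_Y(-s)\rC\rp .
\]
Optimising over $s\ge0$ gives $\frac1t\log\Prob(S_t\le tx)\le-\sup_{\theta\le0}\{\theta x-\Lambda_Y(\theta)\}$ for every $t$. To identify this with $-I(x)$ I would use the argument of \Cref{lem:positive}: Jensen gives $\Lambda_Y(\theta)\ge\theta\mu$. Hence for $\theta>0$ we have $\theta x-\Lambda_Y(\theta)\le\theta(x-\mu)<0$, while $\theta=0$ contributes $0$. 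So the unrestricted supremum $I(x)$ is attained over $\theta\le0$, and $\limsup_t\frac1t\log\Prob(S_t\le tx)\le-I(x)$ holds with no error term.

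\emph{Lower bound.} Exponential tilting gives the matching lower bound. If $I(x)=+\infty$ there is nothing to prove. Otherwise, fix $\varepsilon>0$ and try to find $\theta<0$ in the interior of the domain of $\Lambda_Y$ such that the tilted law $dQ_\theta:=e^{\theta Y_1-\Lambda_Y(\theta)}\,d\Prob$ has mean $\Lambda_Y'(\theta)=x'\in(x-\varepsilon,x)$. Writing $Q_\theta^{\otimes t}$ for the product law, one has $d\Prob/dQ_\theta^{\otimes t}=e^{-\theta S_t+t\Lambda_Y(\theta)}$. On the event $\{t(x-\varepsilon)<S_t\le tx\}$ this density is at least $e^{-t(\theta x'-\Lambda_Y(\theta))-t|\theta|\varepsilon}$, since $\theta<0$. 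By the weak law of large numbers under $Q_\theta$, that event has $Q_\theta^{\otimes t}$-probability bounded away from zero. Therefore
\[
   \liminf_{t\to\infty}\tfrac1t\log\Prob\lp S_t\le tx\rp\ \ge\ -\lp\theta x'-\Lambda_Y(\theta)\rp-|\theta|\varepsilon\ =\ -I(x')-|\theta|\varepsilon .
\]
Then let $\varepsilon\downarrow0$, using continuity of the convex function $I$ on the interior of its domain, or monotonicity of $I$ below $\mu$ together with lower semicontinuity at the boundary.

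\emph{Main obstacle.} The hard step is the existence of the tilt. $\Lambda_Y$ is only assumed finite near $0$, so $\Lambda_Y'$ may not sweep all values below $\mu$; this is the non-steep situation of \Cref{rem:steep}. I would first try the standard truncation device. Condition $Y_1$ on $\{Y_1\ge-M\}$, so that the log-MGF on the negative side is finite everywhere and steep, and the tilt equation is solvable. Run the lower bound for the conditioned variables, paying $t\log\Prob(Y_1\ge-M)$. Finally let $M\to\infty$, using that the truncated rate functions decrease to $I$, by monotone convergence inside the supremum. The boundary case $x$ equal to the essential infimum of $Y_1$ is handled directly: there $\Prob(S_t\le tx)=\Prob(Y_1=x)^t$ and $I(x)=-\log\Prob(Y_1=x)$.

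\emph{Bernoulli case.} Here $\Lambda_Y(\theta)=\log(1-y+ye^\theta)$. Solving $\Lambda_Y'(\theta)=x$ gives $e^\theta=\frac{x(1-y)}{y(1-x)}$, and substituting back yields $I(x)=x\log\frac xy+(1-x)\log\frac{1-x}{1-y}=\kl(x\|y)$. This is the same computation as in the proof of \Cref{cor:rung5}(b).
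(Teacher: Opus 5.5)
The paper does not prove this statement. It quotes it in \Cref{app:concentration} from \cite[Thm.~2.2.3]{DZ10}. Your scheme follows the standard proof given there: Chernoff upper bound, change of measure for the lower bound, truncation for the non-steep case, and direct treatment of $x=\operatorname{ess\,inf}Y_1$. Several of your steps are correct: the upper bound, the Jensen reduction to $\theta\le0$, the change of measure with $x'$ strictly inside the window $(x-\varepsilon,x]$, and the Bernoulli computation.

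The step that fails as justified is the last one, $M\to\infty$. Put $\Lambda_M(\theta):=\log\E[e^{\theta Y_1};\,Y_1\ge-M]$ and $f_M(\theta):=\theta x-\Lambda_M(\theta)$. Your truncated lower bound, including the cost $t\log\Prob(Y_1\ge-M)$, then reads $\liminf_t\frac1t\log\Prob(S_t\le tx)\ge-J_M(x)$ with $J_M(x):=\sup_\theta f_M(\theta)$. Monotone convergence gives $\Lambda_M\uparrow\Lambda_Y$ pointwise, hence $f_M\downarrow\theta x-\Lambda_Y(\theta)$ and $J_M(x)\ge I(x)$. That is the wrong direction. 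You need $\lim_MJ_M(x)\le I(x)$, which is an interchange of $\lim_M$ with $\sup_\theta$ for a \emph{decreasing} family, and pointwise convergence does not give it: $\mathbf 1_{[M,\infty)}\downarrow0$ while every supremum equals $1$. This is exactly where \cite{DZ10} need a compactness argument. It matters in the non-steep case you truncated for, where $\Lambda_Y=+\infty$ on part of the region in which the maximisers of $f_M$ may sit.

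The fix is short. First confine $\theta$ to a compact interval uniformly in $M$. For $\theta>0$, Jensen under the conditioned law together with $\E[Y_1\mid Y_1\ge-M]\ge\mu>x$ gives $f_M(\theta)\le\log\frac1{\Prob(Y_1\ge-M)}\to0\le I(x)$. For $\theta\le0$, fix $x''\in(x_0,x)$, where $x_0:=\operatorname{ess\,inf}Y_1$. Then $f_M(\theta)\le\theta(x-x'')+\log\frac1{\Prob(-M\le Y_1\le x'')}$, which is nonpositive for $\theta\le-\Theta$ with $\Theta$ independent of all large $M$. On $[-\Theta,0]$ every $\Lambda_M$ is finite and continuous, so take maximisers $\theta_M$ and extract a subsequence $\theta_{M_k}\to\bar\theta$. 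Using $\Lambda_{M_k}\ge\Lambda_{M'}$ for $M_k\ge M'$ and continuity of $\Lambda_{M'}$, you get $\lim_M\sup_{[-\Theta,0]}f_M\le\bar\theta x-\Lambda_{M'}(\bar\theta)$ for every $M'$, hence $\le\bar\theta x-\Lambda_Y(\bar\theta)\le I(x)$. This is the nested-compact-level-set argument of \cite{DZ10} in another guise. Combined with the $\theta>0$ estimate, it gives $\lim_MJ_M(x)\le I(x)$.

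There is also a smaller slip in the $\varepsilon\downarrow0$ step: lower semicontinuity of $I$ runs the wrong way there. What you need, and have, is that each $x\in(x_0,\mu)$ is interior to $\{I<\infty\}$. This holds because $I(x'')\le\log\frac1{\Prob(Y_1\le x'')}<\infty$ for $x''\in(x_0,\mu]$, so plain continuity of the convex function $I$ suffices.
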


\noindent
Used for the sharpness statements \Cref{prp:bdd-sharp} and, through
\cite[Prop.~2.23]{CEIL}, for \Cref{cor:rung6}.

\begin{theorem}[Pinsker's inequality for the binary divergence;
{\cite[Lem.~2.3]{Wai19}}]\label{thm:app-pinsker}
For all $x,y\in[0,1]$, $\ \kl(x\|y)\ \ge\ 2(x-y)^{2}$.
\end{theorem}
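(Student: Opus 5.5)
The plan is to fix $x\in[0,1]$ and study the difference
\[
   f(y)\ :=\ \kl(x\|y)-2(x-y)^{2},\qquad y\in(0,1),
\]
as a function of the second argument only. At $y=x$ both terms vanish, so $f(x)=0$. It then suffices to show that $f$ is nonincreasing on $(0,x]$ and nondecreasing on $[x,1)$, since then $x$ is a global minimiser of $f$ and $f\ge0$ everywhere.

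The key step is a derivative computation. For $y\in(0,1)$, and with the convention $0\log0=0$ (the terms in $x$ alone do not depend on $y$),
\[
   \frac{\partial}{\partial y}\kl(x\|y)\ =\ -\frac{x}{y}+\frac{1-x}{1-y}\ =\ \frac{y-x}{y(1-y)},
   \qquad
   \frac{\partial}{\partial y}\lp-2(x-y)^{2}\rp\ =\ -4(y-x).
\]
Adding the two gives
\[
   f'(y)\ =\ (y-x)\lp\frac{1}{y(1-y)}-4\rp.
\]
The bracket is nonnegative because $y(1-y)\le\frac14$. Hence $f'$ has the sign of $y-x$, which gives exactly the required monotonicity, and the mean value theorem finishes the interior case. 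The formula remains valid when $x\in\{0,1\}$, where one of the two logarithmic terms is absent.

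The only remaining care is the boundary $y\in\{0,1\}$. If $y=x$, both sides of the inequality are $0$. If $y\ne x$, then $\kl(x\|y)=+\infty$ under the stated conventions, for instance $x\log\frac{x}{0}=+\infty$ when $x>0=y$, so the inequality holds trivially. I expect no real obstacle here. The one point to watch is that these conventions agree with the limits of the interior expression, so that the endpoint cases do not need a separate continuity argument.
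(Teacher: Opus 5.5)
The paper gives no proof of this statement. It is one of the classical results quoted without proof in \Cref{app:concentration}, so there is no in-paper argument to match against.

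Your proof is correct, and it is the standard textbook argument for the binary case. The derivative $f'(y)=(y-x)\bigl(\frac{1}{y(1-y)}-4\bigr)$ is right, $y(1-y)\le\frac14$ fixes its sign, and for $y\in\{0,1\}$ with $y\neq x$ the left-hand side is indeed $+\infty$.

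One small repair is needed. When $x\in\{0,1\}$, the point $y=x$ lies outside the open interval $(0,1)$ on which you defined $f$, so the claim that ``$x$ is a global minimiser'' requires extending $f$ to that point. This is harmless. For instance, $\kl(0\|y)=\log\frac1{1-y}$ is smooth on $[0,1)$ and vanishes at $y=0$, so the mean value theorem runs on $[0,y]$ directly. Your last sentence actually invokes this continuity rather than letting you avoid it.

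For comparison, the paper's own machinery yields the inequality in the direction it actually uses, namely $\bar x\le y$:
\begin{itemize}
\item \Cref{cor:rung5} records $\psi^{\mathrm{exact}}\le\psi^{\mathrm{quad}}$, which is Hoeffding's lemma (\Cref{thm:app-hoeffding}) applied to the two-point law.
\item The one-sided Legendre transform reverses pointwise inequalities, so this becomes $\kl(y-\frac xr\|y)\ge\frac{2x^{2}}{r^{2}}$ for $0\le x\le yr$.
\item Setting $\bar x=y-\frac xr$, this reads $\kl(\bar x\|y)\ge2(\bar x-y)^{2}$.
\item The case $\bar x>y$ then follows from the symmetry $\kl(x\|y)=\kl(1-x\|1-y)$.
\end{itemize}
That route shows that Pinsker is exactly the statement that part (b) of \Cref{cor:rung5} dominates part (a) step by step. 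But it leans on Hoeffding's lemma, whose usual proof rests on the same fact you use: a Bernoulli variance is at most $\frac14$. Your route is more elementary, self-contained, and handles both directions at once.
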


\noindent
Used to compare the two forms of \Cref{cor:rung5}; it shows that part (b)
implies the Azuma--Hoeffding exponent in the homogeneous case, so that part (a)
earns its place only through the inhomogeneous $R_t^{2}\le tr^{2}$.

\begin{theorem}[Donsker--Varadhan variational formula; {\cite{DV75}},
{\cite[Cor.~4.14]{BLM13}}]\label{thm:app-dv}
Let $\Prob$ be a probability measure and $\zeta$ a measurable function with
$\E_{\Prob}[e^{\zeta}]<\infty$, and let $Q^{\zeta}$ be the Gibbs measure
$dQ^{\zeta}:=e^{\zeta}\E_{\Prob}[e^{\zeta}]^{-1}d\Prob$. Then
\[
   \log\E_{\Prob}\lB e^{\zeta}\rB
   \ =\ \sup_{Q\ll\Prob}\lC\E_Q[\zeta]-\KL(Q\|\Prob)\rC ,
\]
the supremum being over probability measures $Q$ with $\KL(Q\|\Prob)<\infty$.
Moreover, for every such $Q$ with $\E_Q[\zeta]$ well defined,
\begin{equation}\label{eq:dv-gap}
   \KL(Q\|\Prob)-\E_Q[\zeta]+\log\E_{\Prob}\lB e^{\zeta}\rB
   \ =\ \KL\lp Q\,\|\,Q^{\zeta}\rp\ \ge\ 0 ,
\end{equation}
so the supremum is attained at $Q^{\zeta}$ and there only.
\end{theorem}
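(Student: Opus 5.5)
The plan is to prove the gap identity \eqref{eq:dv-gap} first. The variational formula and the uniqueness statement then follow from it almost immediately.

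\emph{Setup and the gap identity.} Write $Z:=\E_{\Prob}[e^{\zeta}]\in(0,\infty)$ and fix $Q\ll\Prob$ with $\KL(Q\|\Prob)<\infty$; let $v:=\frac{dQ}{d\Prob}$. Since $\frac{dQ^{\zeta}}{d\Prob}=e^{\zeta}/Z$ is strictly positive, $\Prob\ll Q^{\zeta}$, hence $Q\ll Q^{\zeta}$ with
\[
   \frac{dQ}{dQ^{\zeta}}=v\,Z\,e^{-\zeta},
   \qquad
   \log\frac{dQ}{dQ^{\zeta}}=\log v-\zeta+\log Z\quad Q\text{-a.s.}
\]
Integrating this identity under $Q$ gives \eqref{eq:dv-gap}, provided the right-hand side can be integrated term by term without an $\infty-\infty$.

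\emph{Integrability, the main obstacle.} The terms $\E_Q[\log v]=\KL(Q\|\Prob)$ and $\log Z$ are finite. For $\zeta$ I will show that $\E_Q[\zeta_+]<\infty$ always holds. I use the Fenchel--Young inequality $uv'\le e^{u}+v'\log v'-v'$ for $u\in\R$, $v'\ge0$. Applied with $u=\zeta_+$ and $v'=v$, it gives
\[
   \E_Q[\zeta_+]=\E_{\Prob}[v\zeta_+]\le \E_{\Prob}[e^{\zeta_+}]+\E_{\Prob}[v\log v]-1
   \le 1+Z+\KL(Q\|\Prob)<\infty .
\]
So $\E_Q[\zeta]\in[-\infty,\infty)$ is always well defined. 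When $\E_Q[\zeta]=-\infty$, the identity reads $+\infty=+\infty$ once $\KL(Q\|Q^{\zeta})$ is shown to be $+\infty$ too. Since $\log\frac{dQ}{dQ^{\zeta}}$ is a sum of a $Q$-integrable part $\log v+\log Z$ and $-\zeta$, its $Q$-integral is unambiguous and equals $+\infty$ exactly when $\E_Q[\zeta]=-\infty$. In all cases we obtain \eqref{eq:dv-gap}, and the gap is $\ge0$ because relative entropy is nonnegative by Jensen.

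\emph{The variational formula and attainment.} From \eqref{eq:dv-gap}, $\E_Q[\zeta]-\KL(Q\|\Prob)\le\log Z$ for every admissible $Q$, with equality iff $\KL(Q\|Q^{\zeta})=0$, i.e.\ $Q=Q^{\zeta}$. This gives ``$\le$'' and uniqueness. For ``$\ge$'', if $\KL(Q^{\zeta}\|\Prob)=\E_{Q^{\zeta}}[\zeta]-\log Z$ is finite, then $Q^{\zeta}$ itself attains the supremum. Otherwise $\E_{Q^{\zeta}}[\zeta]=+\infty$. In that case I will approximate by the Gibbs measures of the truncations $\zeta\wedge n$ (or by $Q^{\zeta}$ conditioned on $\{\zeta\le n\}$), which have finite entropy, and pass to the limit. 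By monotone convergence, $\E_{\Prob}[e^{\zeta\wedge n}]\to Z$, so the values $\log\E_{\Prob}[e^{\zeta\wedge n}]$ attained for $\zeta\wedge n$ approach $\log Z$. Moreover $\E_Q[\zeta]\ge\E_Q[\zeta\wedge n]$ for these $Q$, so the supremum is at least $\log Z$.
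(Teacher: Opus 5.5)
Your proposal is correct and follows the same route as the paper. The paper does not prove the theorem: it cites it, and notes only that \eqref{eq:dv-gap} is the identity $\log\frac{dQ}{d\Prob}=\log\frac{dQ}{dQ^{\zeta}}+\zeta-\log\E_{\Prob}[e^{\zeta}]$ integrated against $Q$. Your Fenchel--Young bound on $\E_Q[\zeta_+]$ and your truncation argument for the case $\KL(Q^{\zeta}\|\Prob)=\infty$ supply the details it omits. That same case also shows the statement's last clause is too strong: when $\KL(Q^{\zeta}\|\Prob)=\infty$ the Gibbs measure is not admissible, so by your equality analysis the supremum equals $\log\E_{\Prob}[e^{\zeta}]$ but is not attained at all, and you should say this explicitly rather than assert attainment at $Q^{\zeta}$.
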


\noindent
Used in the proof of \Cref{prp:duality}; \eqref{eq:dv-gap} is the elementary
identity $\log\frac{dQ}{d\Prob}=\log\frac{dQ}{dQ^{\zeta}}+\zeta
-\log\E_{\Prob}[e^{\zeta}]$ integrated against $Q$, and it is what supplies the
equality case used there. Note that this variational formula is the
\emph{tool}; the statement of \Cref{prp:duality} --- that a Cram\'er rate
function equals an information projection onto a half-space --- is the
one-dimensional specialisation of Sanov's theorem combined with the contraction
principle \cite[\S6.2]{DZ10}.

\begin{theorem}[Fan's inequality; {\cite[Lem.~4.1 and (4.12)]{FGL15}}]
\label{thm:app-fan}
Let $\psi_{\mathrm E}(\lambda):=-\log(1-\lambda)-\lambda$. Then for every
$\lambda\in[0,1)$ and every $\xi\ge-1$,
\[
   \exp\lC\lambda\xi-\psi_{\mathrm E}(\lambda)\,\xi^{2}\rC
   \ \le\ 1+\lambda\xi ,
   \qquad\text{and}\qquad
   \psi_{\mathrm E}(\lambda)\ \le\ \frac{\lambda^{2}}{2(1-\lambda)} .
\]
\end{theorem}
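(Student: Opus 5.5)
The plan is to prove the second inequality first, because the first one uses it. Then I take logarithms in the first inequality and study the sign of a single real function of $\xi$ on $[-1,\infty)$. The case $\lambda=0$ is trivial: both sides equal $1$ and $\psi_{\mathrm E}(0)=0$. So fix $\lambda\in(0,1)$.

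\emph{The bound on $\psi_{\mathrm E}$.} Expanding the logarithm gives $\psi_{\mathrm E}(\lambda)=\sum_{k\ge2}\lambda^{k}/k$. Using $1/k\le\frac12$ for $k\ge2$, this is at most $\frac{\lambda^{2}}{2}\sum_{j\ge0}\lambda^{j}=\frac{\lambda^{2}}{2(1-\lambda)}$. The same series also gives the lower bound $\psi_{\mathrm E}(\lambda)\ge\frac{\lambda^{2}}{2}$, which I will need as well.

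\emph{Reduction of the first inequality.} For $\xi\ge-1$ we have $1+\lambda\xi\ge1-\lambda>0$, so the claim is equivalent to $f(\xi)\ge0$ on $[-1,\infty)$, where
\[
   f(\xi):=\log(1+\lambda\xi)-\lambda\xi+\psi_{\mathrm E}(\lambda)\,\xi^{2} .
\]
The function vanishes at two points. Clearly $f(0)=0$. The key observation is that also $f(-1)=\log(1-\lambda)+\lambda+\psi_{\mathrm E}(\lambda)=0$, by the very definition of $\psi_{\mathrm E}$; this is why this particular $\psi_{\mathrm E}$ is the right coefficient. Differentiating,
\[
   f'(\xi)=\frac{\lambda}{1+\lambda\xi}-\lambda+2\psi_{\mathrm E}(\lambda)\xi
   =\xi\,g(\xi),
   \qquad
   g(\xi):=2\psi_{\mathrm E}(\lambda)-\frac{\lambda^{2}}{1+\lambda\xi} ,
\]
and $g$ is strictly increasing on $[-1,\infty)$.

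\emph{Sign analysis.} This is the only step needing care, and both bounds on $\psi_{\mathrm E}$ enter it. First, $g(0)=2\psi_{\mathrm E}(\lambda)-\lambda^{2}\ge0$ by the lower bound. So for $\xi>0$ we have $g>0$, hence $f'>0$, and $f$ increases from $f(0)=0$. Second, $g(-1)=2\psi_{\mathrm E}(\lambda)-\frac{\lambda^{2}}{1-\lambda}\le0$ by the upper bound just proved. Hence $g$ has a single sign change at some $\xi^{\ast}\in[-1,0]$.
\begin{itemize}
\item On $(\xi^{\ast},0)$ we have $\xi<0<g$, so $f'<0$; thus $f$ decreases to $f(0)=0$ and is nonnegative there.
\item On $[-1,\xi^{\ast}]$ we have $\xi<0$ and $g\le0$, so $f'\ge0$; thus $f$ increases from $f(-1)=0$ and is nonnegative there.
\end{itemize}
Together these give $f\ge0$ on all of $[-1,\infty)$. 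Exponentiating yields the first inequality of \Cref{thm:app-fan}.
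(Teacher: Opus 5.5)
Your proof is correct and complete. The series bounds $\frac{\lambda^{2}}{2}\le\psi_{\mathrm E}(\lambda)\le\frac{\lambda^{2}}{2(1-\lambda)}$ are right, the factorisation $f'(\xi)=\xi\,g(\xi)$ holds with $g$ strictly increasing on $[-1,\infty)$, and the piecewise monotonicity argument closes. The paper has no proof to compare with: \Cref{thm:app-fan} is quoted from \cite{FGL15} in the appendix where nothing is proved. Your argument therefore supplies what the paper delegates.

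Two remarks on the route. First, the bounds on $\psi_{\mathrm E}$ are not needed in the sign analysis. Since $f(-1)=f(0)=0$, Rolle's theorem gives $\eta\in(-1,0)$ with $\eta\,g(\eta)=0$, so $g(\eta)=0$. Strict monotonicity of $g$ then yields $g<0$ on $[-1,\eta)$ and $g>0$ on $(\eta,\infty)$ at once, and your two bounds reappear only as the by-products $g(-1)<0<g(0)$.

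Second, there is a shorter proof, which is the exact mirror of the Bennett-quotient \Cref{lem:elem}. For $u>-1$ write $u-\log(1+u)=u^{2}\int_0^1\frac{1-\theta}{(1+\theta u)^{2}}\,d\theta$, so that $q(u):=\frac{u-\log(1+u)}{u^{2}}$ is nonincreasing on $(-1,\infty)$. With $u=\lambda\xi\ge-\lambda$ this gives $\lambda\xi-\log(1+\lambda\xi)=\lambda^{2}\xi^{2}q(\lambda\xi)\le\lambda^{2}\xi^{2}q(-\lambda)=\psi_{\mathrm E}(\lambda)\,\xi^{2}$, which is the claim. That route exhibits $\psi_{\mathrm E}(\lambda)$ directly as $\sup_{\xi\ge-1,\,\xi\ne0}\frac{\lambda\xi-\log(1+\lambda\xi)}{\xi^{2}}$, attained at $\xi=-1$. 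Your observation $f(-1)=0$ encodes the same sharpness (no smaller coefficient works), at the price of a case analysis. Both arguments are pointwise in $\lambda$, which is all \Cref{rem:eb-lil} needs.
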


\noindent
The engine of \Cref{thm:empbern}. Note that only the one-sided restriction
$\xi\ge-1$ is imposed, which is \Cref{hyp:ceiling}; nothing bounds $\xi$ above,
which is why the crash side may be arbitrarily heavy. Note also the
normalisation: here $\xi$ is bounded \emph{below}, whereas in
\Cref{thm:app-fgl} it is bounded above, the two being related by
$\xi\rightsquigarrow-\xi$. Being pointwise, the inequality holds for any
$[0,1)$-valued random $\lambda$; what \emph{predictability} of a sequence
$(\lambda_n)_{n\in\N}$ buys is the conditional-expectation step that turns the
stepwise product into a supermartingale, which is what \Cref{rem:eb-lil}
uses.

\begin{theorem}[Hoeffding's inequality for supermartingales; {\cite{FGL12}}]
\label{thm:app-fgl}
Let $(\xi_n)_{n\le t}$ be a supermartingale difference sequence with
$\xi_n\le1$ $\Prob$-a.s.\ and
$\sum_{n\le t}\E[\xi_n^{2}\mid\Fc_{n-1}]\le v^{2}$ $\Prob$-a.s. Then for
$0\le x\le t$,
\[
   \Prob\lp\max_{k\le t}\sum_{n\le k}\xi_n\ \ge\ x\rp
   \ \le\ \lB\lp\frac{v^{2}}{x+v^{2}}\rp^{x+v^{2}}
      \lp\frac{t}{t-x}\rp^{t-x}\rB^{\frac{t}{t+v^{2}}}
   \ \le\ \exp\lp-\frac{x^{2}}{2(v^{2}+x/3)}\rp ,
\]
with the convention $(+\infty)^{0}:=1$ at $x=t$.
\end{theorem}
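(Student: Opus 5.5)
The statement to be proved is the last quoted result, \Cref{thm:app-fgl}, the Fan--Grama--Liu Hoeffding inequality for supermartingales. My plan is the classical Bennett/Hoeffding route in three steps: exponential supermartingale, Doob maximal inequality, and a concavity bound that keeps the horizon factor. Only the Bernstein comparison at the end is routine.

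\emph{Step 1: one-step exponential moment.} Fix $\lambda\ge0$. For $\xi\le1$, the function $g(u)=(e^{\lambda u}-1-\lambda u)/u^{2}$ is nondecreasing (\Cref{lem:elem} after rescaling). Hence
\[
   e^{\lambda\xi}\le1+\lambda\xi+\xi^{2}(e^{\lambda}-1-\lambda),
\]
and taking conditional expectations, with $\E[\xi_n\mid\Fc_{n-1}]\le0$, gives
\[
   \E[e^{\lambda\xi_n}\mid\Fc_{n-1}]\le1+\sigma_n^{2}(e^{\lambda}-1-\lambda),
\]
where $\sigma_n^{2}:=\E[\xi_n^{2}\mid\Fc_{n-1}]$. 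This alone gives Bennett. To keep the Hoeffding factor I instead use the sharper two-point bound for variables with $\xi\le1$, mean $\le0$ and conditional variance $\sigma^{2}$:
\[
   \E e^{\lambda\xi}\le\frac{e^{-\lambda\sigma^{2}}+\sigma^{2}e^{\lambda}}{1+\sigma^{2}}=:f(\sigma^{2},\lambda).
\]
It comes from the quadratic majorant of $e^{\lambda u}$ touching at $u=1$ and $u=-\sigma^{2}$, which is Bennett's 1962 lemma. Monotonicity in the mean is handled by the fact that the bound is increasing in the mean for $\lambda\ge0$.

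\emph{Step 2: aggregation via concavity.} The bound must be aggregated while the $\sigma_n^{2}$ are only predictable with $\sum_n\sigma_n^{2}\le v^{2}$. The map $\sigma^{2}\mapsto\log f(\sigma^{2},\lambda)$ is concave; I would check this directly, and this is the step I expect to be the main obstacle. Granting it, I take
\[
   M_k:=\exp\Bigl(\lambda\sum_{n\le k}\xi_n-\sum_{n\le k}\log f(\sigma_n^{2},\lambda)\Bigr),
\]
which is a nonnegative supermartingale. Doob's maximal inequality (\Cref{thm:app-ville}) applied to $M_k$ needs a deterministic lower bound on the compensator. Concavity and Jensen over the $t$ terms give
\[
   \sum_{n\le t}\log f(\sigma_n^{2},\lambda)\le t\log f\Bigl(\frac{v^{2}}{t},\lambda\Bigr),
\]
using monotonicity of $f$ in $\sigma^{2}$ to replace $\sum_n\sigma_n^{2}$ by $v^{2}$. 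A technical point is that the bound is needed uniformly in $k\le t$. I would handle it by padding with $\xi_n=0$ up to $t$, which only increases the compensator sum and is harmless because $f\ge1$.

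\emph{Step 3: optimisation.} The above yields
\[
   \Prob\Bigl(\max_{k\le t}S_k\ge x\Bigr)\le\inf_{\lambda\ge0}e^{-\lambda x}f\Bigl(\frac{v^{2}}{t},\lambda\Bigr)^{t}.
\]
Optimising over $\lambda$ is the standard Hoeffding two-point computation. With $p:=\frac{v^{2}}{t+v^{2}}$, it is a binomial Chernoff bound on a variable taking values $1$ and $-v^{2}/t$. The optimum produces
\[
   \Bigl[\Bigl(\tfrac{v^{2}}{x+v^{2}}\Bigr)^{x+v^{2}}\Bigl(\tfrac{t}{t-x}\Bigr)^{t-x}\Bigr]^{t/(t+v^{2})}.
\]
The final comparison with $\exp(-x^{2}/(2(v^{2}+x/3)))$ follows because this bound is at most Bennett's $\exp(-v^{2}h(x/v^{2}))$, from letting $t\to\infty$ with monotonicity in $t$, together with $h(z)\ge z^{2}/(2(1+z/3))$ as in the proof of \Cref{cor:rung3}.
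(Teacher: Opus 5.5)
The paper does not prove this result; it quotes it from Fan, Grama and Liu \cite{FGL12}. Your plan is correct and is essentially their argument: Bennett's two-point bound, concavity of $\sigma^{2}\mapsto\log f(\sigma^{2},\lambda)$ with Jensen across the steps, an exponential-martingale bound, and the binomial Chernoff optimisation with $p=\frac{v^{2}}{t+v^{2}}$. The one simplification is harmless: because the statement assumes the deterministic bound $\sum_{n\le t}\E[\xi_n^{2}\mid\Fc_{n-1}]\le v^{2}$, the fact $\log f\ge0$ already bounds every partial compensator by the full one, whereas their stronger event (only $\sum_{n\le k}\E[\xi_n^{2}\mid\Fc_{n-1}]\le v^{2}$ at the crossing index $k$) forces them to use that $k\mapsto k\log f(v^{2}/k,\lambda)$ is nondecreasing. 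The concavity you flagged as the main obstacle does hold, by a direct computation: with $z:=\lambda(1+\sigma^{2})$,
\[
   -(1+\sigma^{2})^{2}\bigl(e^{-\lambda\sigma^{2}}+\sigma^{2}e^{\lambda}\bigr)^{2}\,
   \frac{\partial^{2}}{\partial(\sigma^{2})^{2}}\log f(\sigma^{2},\lambda)
   \ =\ 2e^{\lambda(1-\sigma^{2})}\Bigl[(\sinh z-z)+\sigma^{2}\bigl(e^{z}-1-z-\tfrac{z^{2}}{2}\bigr)\Bigr]\ \ge\ 0 .
\]
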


\noindent
Quoted in \Cref{rem:fgl}, where it sharpens \eqref{eq:bound-rung3} under identical
hypotheses. That the \emph{supermartingale} form is available is what makes it
usable here: \Cref{def:main}(a) certifies a one-sided drift, so
$(c_n-L_n)_{n\in\N}$ is a supermartingale difference sequence and not a
martingale difference sequence.

\end{document}